\newcommand{\voorDickOrNotVoorDick}[1]{}
\newcommand{\uu}{{\sf U}}
\newcommand{\idel}[1]{{\ensuremath {\mathrm{I}\Delta_{#1}}}\xspace}
\newcommand{\la}{\langle}
\newcommand{\ra}{\rangle}
\newcommand{\principle}[1]{\ensuremath{\formal{#1}}}
\newcommand{\formal}[1]{\ensuremath{{\sf {#1}}}\xspace}
\newcommand{\il}{{\ensuremath{\textup{\textbf{IL}}}}\xspace}
\newcommand{\extil}[1]{\ensuremath{\textup{\textbf{IL}}{\sf\ensuremath{#1}}}\xspace}
\newcommand{\gl}{{\ensuremath{\textup{\textbf{GL}}}}\xspace}
\newcommand{\intl}[1]{{\ensuremath {\textup{\textbf{IL}}}({\rm #1})}}
\newcommand{\ilm}{\extil{M}}
\newcommand{\ilp}{\extil{P}}
\newcommand{\ilw}{\extil{W}}
\newcommand{\ilal}{\intl{All}\xspace}
\newcommand{\pra}{\ensuremath{{\mathrm{PRA}}}\xspace}
\newcommand{\pa}{\ensuremath{{\mathrm{PA}}}\xspace}
\theoremstyle{plain}
\newtheorem{theorem}{Theorem}[section]
\newtheorem{definition}[theorem]{Definition}
\newtheorem{lemma}[theorem]{Lemma}
\newtheorem{corollary}[theorem]{Corollary}
\theoremstyle{remark}
\theoremstyle{question}
\newcommand{\inty}{interpretability}
\newcommand{\ilgen}[1]{\text{\textbf{IL}\textsubscript{gen}\textsf{#1}}}
\newcommand{\kgen}[1]{\text{(\textsf{{#1}})\textsubscript{gen}}}
\title{An overview of Generalised Veltman Semantics}
\author{Joost J. Joosten, Jan Mas Rovira, Luka Mikec, Mladen Vuković}
\begin{document}
\maketitle

\begin{abstract}
\voorDickOrNotVoorDick{This paper is dedicated to Dick de Jongh at the occasion of his 80th birthday and gives an 
overview of what is called Generalised Veltman semantics.} Interpretability logics are 
endowed with relational semantics \`a la Kripke: Veltman semantics. For certain 
applications though, this semantics is not fine-grained enough. Back in 1992\voorDickOrNotVoorDick{, in the research group of de Jongh, } the notion of \emph{generalised Veltman semantics} emerged to obtain certain 
non-derivability results as was first presented by Verbrugge (\cite{Verbrugge}). It has turned out that this semantics has various good properties.
In particular, in many cases completeness proofs become simpler and the richer semantics will allow for filtration arguments as opposed to regular 
Veltman semantics. This paper aims to give an overview of results and applications of Generalised 
Veltman semantics up to the current date.
\end{abstract}

\tableofcontents

\section{Introduction}
This paper deals with interpretability logics and a particular kind of relational semantics for them. In the literature we find interpretability logics that are propositional modal logics with a unary modality $\Box$ for formal provability and a binary modality $\rhd$ used to denote formal interpretability or some related (meta)mathematical notion like, for example, conservativity. As such, interpretability logics extend the well-known provability logic \gl. In particular, Kripke semantics for \gl can be extended to account for the new binary interpretability operator $\rhd$. Such an extension was first considered by Frank Veltman and now goes by the name of Veltman semantics. 

Unary modal logics allow for other abstract semantics like topological or neighbourhood semantics which can be seen as generalisations of the regular Kripke semantics. Although these kind of semantics have not yet been studied for interpretability logics, there is some sort of hybrid generalisation which is called \emph{Generalised Veltman Semantics}, GVS for short. GVS \voorDickOrNotVoorDick{originated in the research group of Dick de Jongh and} was first introduced and studied by Verbrugge in \cite{Verbrugge}.
In GVS the $\Box$ modality is dealt with as before and only the $\rhd$ semantics is generalised in a way reminiscent to neighbourhood semantics. 

GVS is more fine-grained than regular Veltman semantics and as such can serve the purpose of distinguishing logical axiom schemes. However, the most important contribution of GVS to the field of interpretability logics lies in the fact that it allows for filtration in various systems, contrary to regular Veltman semantics. This has resulted in a situation where certain logics are known to be complete w.r.t.~GVS but known to be incomplete with respect to regular Veltman semantics. In other cases, the only known way of showing decidability of a logic runs via GVS. Thus, GVS is proving itself an important tool in the study of interpretability logics. Moreover, completeness proof for GVS often turn out to be more uniform and simple than in the case of regular Veltman semantics as one can appreciate in Sections \ref{section:CompletenessPreliminaries} and \ref{section:ModalCompleteness} of this paper.

In the remainder of the paper we will give an overview of the uses and occurrences of GVS. Before doing so, we finish this introduction by briefly outlining the general development of \inty~logics and the role of GVS therein. The main point of the remainder of this introduction is to show that the field of interpretability logics has grown to a mature state having various applications and relations to other fields of logic and we do not claim to give an exhaustive overview. \voorDickOrNotVoorDick{In particular, we hope that Dick de Jongh feels very satisfied of taking notice of the state-of-the-art: to see one of his many intellectual children in early maturity. Thank you for your many contributions Dick, to this beautiful field!}\\
\ \medskip

\subsection{The beginnings}
An interpretation of a theory $V$ into a theory $U$ is roughly a translation $j$ that maps the non-logical symbols in the language of $V$ to formulas in the language of $U$ with exactly the same free variables, so that any theorem $\varphi$ of $V$ will become provable in $U$ too when we apply the translation $j$ to it. Various kinds of interpretations are around in the literature and interpretations are used in practically all branches of mathematics or meta-mathematics though a first methodological treatment is presented in \cite{TarskiEtAl:1953:UndecidableTheories}. We are not too much interested in the variations and uses of interpretations and refer the interested reader to \cite{Visser:1997:OverviewIL, JaparidzeJongh:1998:HandbookPaper}. Rather, we shall focus on the logics that describe the structural behaviour of the interpretability notion.

Interpretability logics arose in the eighties/nineties of the last century around Petr H\'ajek, Dick de Jongh, Franco Montagna, Vít\v{e}zlav \v{S}vejdar, Frank Veltman and Albert Visser. Just like provability logic describes the provably structural behaviour of the formalised provability predicate, the aim was to find a logic that describes exactly all provably structural behaviour of the notion of interpretability. 

Probably the first published conception of interpretability as a modal operator is \v{S}vejdar's \cite{svejdar:1983} from\footnote{We refer in this introduction to the year of publication while often preprints already circulated prior to that.} 1983. 
Montagna took this project further in his paper \cite{Montagna:1987:Provability} of 1987. However, it was not until 1990 when
 Visser conceived  the modal logical framework proposing various principles and axiom schemes (\cite{Visser:1988:preliminaryNotesOnInterpretabilityLogic, Visser:1990:InterpretabilityLogic}) in the format we know today giving rise to the basic interpretability logic \il. Semantics for these systems was provided by Veltman and de Jongh who proved completeness (\cite{JonghVeltman:1990:ProvabilityLogicsForRelativeInterpretability}) in 1990 for the logics \il, and the extensions \ilp and \ilm.
 
 Ever since, the field of interpretability logics knows various different logics and has seen a development into quite a mature field interacting with various other branches of logic and mathematics. We shall now mention a few of the most notable developments. Below we shall mention various systems of interpretability logic, the definitions of which shall be postponed to later sections.
 
 \subsection{Relation to meta-mathematics}
 Various logics akin to \il are known to adequately describe well-delimited parts of meta-mathematics lending much importance and applicability to these logics. The first completeness result
 arose when Visser proved (\cite{Visser:1990:InterpretabilityLogic}, \cite{Zambella:1992:Interpretability}) arithmetical completeness for the logic \ilp for any $\Sigma_1$ sound, finitely axiomatisable theory where the superexponential function is provably total.

Any $\Sigma_1$-sound theory containing a minimum of arithmetic ($\idel{0} +\exp{}$) will have the same provability logic \gl (\cite{de1991proof}). The situation for interpretability is very different. Shavrukov (\cite{Shavrukov:1988:InterpretabilityLogicPA}) and independently Berarducci (\cite{Berarducci:1990:InterpretabilityLogicPA}), established that the logic \ilm generates precisely the set of interpretability principles that are always provable\footnote{The way to go from always provable to always true is done as always by adding reflection over the set of theorems. See \cite{DBLP:journals/jsyml/Strannegard99} for a slight generalisation.} in theories like \pa (being $\Sigma_1$ sound and having full induction/proving consistency of any of its finite subsystems).

Since different kinds of theories have different kind of interpretability logics, a major question in the field revolves about the logic that generates the collection of principles that is always provable in \emph{any} reasonable arithmetical theory. Here the quantifier ``any'' is left vague on purpose but can be taken to be any theory containing $\idel{0} +\exp{}$. The target logic is denoted \ilal and even though much progress has been achieved \cite{JoostenVisser:2000:IntLogicAll,rijk:unar92,GorisJoosten:2011:ANewPrinciple,Joosten:1998:MasterThesis,Joosten:2004:InterpretabilityFormalized, GorisJoosten:2020:TwoSeries} 
on the question, its exact nature is still a major open problem. Given the plethora of recent principles it is quite conceivable that a natural answer to this question will require an extension of the language as in \cite{JoostenVisser:2004:Toolkit, JoostenMikecVisser:2020:TwoLogics}.

Basically, for any theory that is neither essentially reflexive nor finitely axiomatisable and proving the totality of the superexponentiation function, the corresponding interpretability logic is unknown. Some very modest progress is presented on the interpretability logic of \pra (\cite{BilkovaJonghJoosten:2009:PRA,Joosten:2005:ClosedFragmentILPRAwithIsig1,Joosten:2010:ConsistencyInPRAandISIGMA, JoostenIcard:2012:RestrictedSubstitutions}). Kalsbeek has published some notes (\cite{Kalsbeek:1991:TowardsExp}) on the interpretability logic of $\idel{0} +\exp{}$ and probably that is about how much is known about interpretability logics for theories that fall out of the two kinds mentioned above.  

Interpretability logics are also good for describing (meta-)mathematical phenomena other than or variations of interpretability. Without giving the definitions, we mention that propositional modal logics similar to the logics mentioned above with the modalities $\{ \Box, \rhd\}$,  occur when axiomatising phenomena like partial conservativity (\cite{orey:rela61, haje:inte71, haje:inte72, haje:cons90, haje:cons92, Japaridze:1994:SimpleProofPi1Conservativity, Joosten:2016:OreyHajek, igna:part91}), cointerpretability, tolerance and cotolerance (\cite{Japaridze:1992:ToleranceLogic, Japaridze:1993:WeakInterpretability, JaparidzeJongh:1998:HandbookPaper}), $\Sigma_1$-interpolability (\cite{igna:prov93}), constructive preservativity (\cite{iemh:prop05}), intuitionistic interpretability (\cite{LitakV18:im, Litak14:trends, litak2017constructive}) and feasible interpretability (\cite{verb:feas93}).

\subsection{Abstract semantics} Even though interpretability has been studied from a perspective of categories (\cite{viss:cate06}) and degrees (\cite{ Svejdar:1978:DegreesOfInterpretability, Mycielski:latticeOfChapters:1990, Bennet:1986Orderings, lind:aspe97, viss:faith05, Visser:2014:IntyDegreesFiniteSequential}) in quite some depth, no abstract algebraic, nor topological semantics for interpretability logics have been studied or designed yet. However, apart from the above-mentioned arithmetical semantics, interpretability logics and its kin come with natural relational semantics. 
This relational so-called \emph{Veltman semantics} has seen a considerable development over the past decades. 

Completeness proofs and decidability results for various logics abounded since de Jongh and Veltman's first results (\cite{JonghVeltman:1990:ProvabilityLogicsForRelativeInterpretability}) using a kind of semi-canonical model definition to deal with the logics \il, \ilm and \ilp. Further completeness results yielded ever more complicated proofs culminating in a completeness proof of \ilw in \cite{JonghVeltman:1999:ILW}. 

New model-theoretical techniques were needed to address other systems like the \emph{step-by-step method}\footnote{These step-by-step methods were around and entering main-stream modal-logical literature since the seventies. They were coined as such in \cite{Burgess02} but before that also went by different names as \emph{Completeness by construction} in a reader from de Jongh and Veltman from the eighties \cite{JonghVeltman83IntensionalLogicCourseText}. The first time the step-by-step method was applied to interpretability logics however, are the completeness proofs for \ilw by de Jongh and Veltman (\cite{JonghVeltman:1999:ILW}) and completeness proofs for \extil{M} and \extil{M_0} by Joosten (\cite{Joosten:1998:MasterThesis}) with later a corrected proof for \extil{M_0} by Goris and Joosten in \cite{Goris-Joosten-08}. Joosten wrote his master thesis under direction of de Jongh and  Visser. De Jongh suggested a variation of the step-by-step method so that we would work with infinite maximal consistent sets but only using finitely many of those to build models. In the context of this historic digression it is good to mention Verbrugge's master thesis \cite{Verbrugge1988MastersThesis} under direction of de Jongh and Visser where she proves completeness w.r.t.~so-called umbelliferous frames using a step-by-step method via the intersection of infinite maximal consistent sets with a finite fragment for a logic $\sf Umb$ which contains modalities for provability, interpretability and witness comparisons as in \v{S}vejdar's \cite{svejdar:1983}.\label{footnoteOnSteps}} (\cite{JonghVeltman:1999:ILW, Goris-Joosten-08, Joosten:1998:MasterThesis}) for logics as \extil{M_0} and \extil{W^*} and \emph{assuring labels} for substantial simplifications of general proofs but most notably \ilw (\cite{BilkvaGorisJoosten:2004:SmartLabels, goris2020assuring}). 

Various other logics resisted completeness proofs for Veltman semantics and actually many turned out to be incomplete. Here the technique of generalised Veltman semantics came to the rescue which is the main topic of this paper. Recently, also subsystems of \il are  being studied where the relational semantics again turn out to be adequate \cite{kurahashi2020modal}.

The notion of bisimulation could be applied to both Veltman semantics \cite{Visser:1990:InterpretabilityLogic} and GVS \cite{Vrgoc-Vukovic}. Furthermore, in \cite{CacicVrgoc:2013}, \v{C}a\v{c}i\'{c} and Vrgo\v{c} defined the notion of a game for Veltman models and proved that a winning strategy for the defender in such a game is equivalent to picking out a bisimulation between two models.  

In various occasions, results using modal model techniques could be translated back to arithmetic results like in the case of self provers and $\Sigma_1$ sentences (\cite{dejo:solu98, GorisJoosten:2012:SelfProvers}) and the fixpoint theorem \cite{JonghVeltman:1990:ProvabilityLogicsForRelativeInterpretability}
and
\cite{deJongh-Visser-91}.
 
\subsection{Decidability and complexity} For many interpretability logics, decidability follows directly from 
their modal completeness proofs and the finite model property (FMP). This is the case for \extil{}, \extil{W}, \extil{P}, 
and \extil{M}. 
For other logics, completeness was obtained with the help of constructions that avoid finite 
(truncated) maximal consistent sets \cite{Goris-Joosten-08}, \cite{Mikec-Vukovic-20}. 
In such cases it is not obvious how 
to make the constructions finite.
Here, the method of filtration has proven useful. However, up to today, we only know how to perform filtration for GVS and not for regular Veltman semantics. The decidability of the logics \extil{M_0} \cite{Perkov-Vukovic-16}, \extil{W^*} 
\cite{Mikec-Perkov-Vukovic-17}, \extil{P_0} and \extil{R} \cite{Mikec-Vukovic-20} has been 
shown using these methods.
In all these cases finite models are obtained as quotient models w.r.t.\ the largest 
bisimulation of the given generalised Veltman model.
At the time of writing, there is no known example of a complete 
and undecidable logic.

For a long time nothing non-trivial was known regarding complexity of interpretability 
logics. The first result concerning complexity was 
that already the closed fragment of \extil{}, unlike the closed fragment of \gl{}, is 
\textsf{PSPACE}-hard \cite{BouJoosten:2011}.
The only other published result is that \il{} is \textsf{PSPACE}-complete 
\cite{mikec-pakhomov-vukovic}. The third author believes to have shown that \ilp{} and \ilw{} 
are \textsf{PSPACE}-complete too, and hopes to do the same with \ilm{} (the proof for which 
turns out to be significantly harder to complete than expected).

\subsection{Many classical results carry over to interpretability logics} A clear signal of working with the right notion and framework is seen in the fact that various classical results find their analogs in our logics. We already mentioned the Fixpoint Theorem and will in this paragraph mention some others without pretending to give an exhaustive overview. 

Areces, Hoogland, and de Jongh in \cite{ArecesHooglanddeJongh:2001} proved 
that arrow interpolation holds for \il, i.e.\ if  $\vdash_{\il} A\rightarrow B$ then there 
is a formula $I$ in the common language of $A$ and $B$ such that 
$\vdash_{\il} A\rightarrow I$ and $\vdash_{\il} I\rightarrow B.$ 
As corollaries one obtains turnstile interpolation (i.e.\ if $A\vdash_{\il}B$ 
then there is a formula $I$ in common language such that $A\vdash_{\il} I$ and
$I\vdash_{\il} B$) and $\rhd$-interpolation (i.e.\ if $\vdash_{\il} A\rhd B$ then there is a 
formula $I$ in common language such that $\vdash_{\il} A\rhd I$ and
$\vdash_{\il} I\rhd B$ ).
In  \cite{ArecesHooglanddeJongh:2001} it is also shown that all these properties transfer to the
system \il{P}.  

It is proven that the system \il{W} doesn't have the property of arrow 
interpolation. Visser \cite{Visser:1997:OverviewIL} proved that systems between \il{M}$_0$ and 
\il{M} do not have interpolation either, although this can be restored by enriching the language (see \cite{DBLP:journals/ndjfl/Goris06}). The interpolation property for the system \il{F} is an open 
problem. For all provability and interpretability logics it is shown in 
\cite{ArecesHooglanddeJongh:2001}
that the Beth definability property and fixed  points property are interderivable. This implies that 
all extensions of the basic system of provability logic \gl{} and all extensions of \il have 
the Beth property.

Perkov and Vukovi\'c \cite{PerkovVukovic:2014} proved a version of van Benthem's characterisation theorem (see \cite{Benthem1983}) for 
interpretability logic. A first-order formula is equivalent to the standard first-order 
translation of some formula of interpretability logic with respect to Veltman models if and 
only if it is invariant under bisimulations between Veltman models. To prove this, they used 
bisimulation games on Veltman models. They provide characteristic formulas which formalise the 
existence of winning strategies for the defender in finite bisimulation games.

H\'{a}jek and \v{S}vejdar \cite{HajekSvejdar:1991:ClosedFormulasInterpretability} determined normal forms for the closed fragment of the
system \il{F}, and showed that we can eliminate the modal operator $\rhd$ from closed \il-formulas.
The normal form for the closed fragment of \il is unlikely to have a nice solution given the PSPACE completeness. However,
\v{C}a\v{c}i\'{c} and Vukovi\'{c} \cite{CacicVukovic:2012} proved normal forms exist for a wide class of closed \il formulas. \v{C}a\v{c}i\'{c} and Kova\v{c}  \cite{CacicKovac:2014} quantified asymptotically,
in exact numbers, how wide those classes are using results from combinatorics and asymptotic analysis.

\subsection{Proof theory} To the best of our knowledge, very few well-behaved proof systems for interpretability logics have been studied. Sasaki \cite{Sasaki:2002:CutFreeIL} gave a cut-free sequent system for \il and prove a cut-elimination theorem for it.
Hakoniemi and Joosten \cite{HakoniemiJoosten:2016:TableauxForInterpretabilityLogics} give a treatment of labelled tableaux proof systems and uniformly prove completeness for any logic whose frame condition is given by a Horn formula.

\section{Logics for interpretability}

In this section we shall lay down the basic definitions.

\subsection{Modal interpretability logics}

In interpretability logics, we adopt a reading convention due to Dick de Jongh that will allow 
us to omit many brackets. As such, we say that the strongest binding `connectives' are $\neg$, 
$\Box$ and $\Diamond$ which all bind equally strong. Next come $\wedge$ and $\vee$, 
followed by $\rhd$ and the weakest connective is $\to$. Thus, for example, 
$A\rhd B \to A \wedge \Box C \rhd B\wedge \Box C$ will be short for 
$(A\rhd B) \to \big((A \wedge \Box C) \rhd (B\wedge \Box C)\big)$.

%If we do not disambiguate a formula of nested conditionals ($\to$ or $\rhd$), then this 
%should be read as a conjunction. For example, $A\rhd B\rhd C$ should be read as 
%$(A\rhd B) \wedge (B\rhd C)$ and likewise for implications.

We first define the core logic \il which shall be present in any other interpretability 
logic. As before, we work in a propositional signature where apart from the classical 
connectives we have a unary modal operator $\Box$ and a binary modal operator $\rhd$.

\begin{definition}%[\il]
The logic \il contains apart from all propositional logical tautologies, all instantiations 
of the following axiom schemes:

\begin{enumerate}
\item[${\sf L1}$]\label{ilax:l1}
        \, $\Box(A\rightarrow B)\rightarrow(\Box A\rightarrow\Box B)$
\item[${\sf L2}$]\label{ilax:l2}
        \, $\Box A\rightarrow \Box\Box A$
\item[${\sf L3}$]\label{ilax:l3}
        \, $\Box(\Box A\rightarrow A)\rightarrow\Box A$
\item[${\sf J1}$]\label{ilax:j1}
        \, $\Box(A\rightarrow B)\rightarrow A\rhd B$
\item[${\sf J2}$]\label{ilax:j2}
        \, $(A\rhd B)\wedge (B\rhd C)\rightarrow A\rhd C$
\item[${\sf J3}$]\label{ilax:j3}
        \, $(A\rhd C)\wedge (B\rhd C)\rightarrow A\vee B\rhd C$
\item[${\sf J4}$]\label{ilax:j4}
        \, $A\rhd B\rightarrow(\Diamond A\rightarrow \Diamond B)$
\item[${\sf J5}$]\label{ilax:j5}
        \, $\Diamond A\rhd A$
\end{enumerate}

The rules of the logic are Modus Ponens (from $\vdash A\to B$ and $\vdash A$, conclude $\vdash B$) and Necessitation 
(from $\vdash A$ conclude $\vdash \Box A$).
\end{definition}
Since $\Box$ denotes provability and $\rhd$ denotes interpretability --a base theory together with the left-hand formula interprets the same base theory together with the right-hand formula-- we can already see what the ${\sf J}$-principles above express. Thus, Principle ${\sf J1}$ expresses that the identity translation defines an interpretation. Principle ${\sf J2}$ expresses that one can compose interpretations by applying one translation after another. Principle ${\sf J3}$ is sometimes referred to as H\'ajek's principle reflecting so-called H\'ajek's Theorem\footnote{We thank Vitek \v{S}vejdar for pointing this out to us.} about constructing an interpretation by cases. Principle ${\sf J4}$ reflects that an interpretation gives rise to relative consistency. Finally, ${\sf J5}$ reflects that one can perform the Henkin construction in arithmetic so that consistency provides an inner model from which an interpretation can be distilled.

\subsection{Arithmetical semantics}
Interpretability logics are related to arithmetics in very much the same way as provability logics are. Thus, we define an arithmetical realisation $*$ as a map that takes propositional variables to sentences in the language of arithmetic\footnote{We assume that all our theories contain the language of arithmetic in one way or another. We refer the reader to \cite{JaparidzeJongh:1998:HandbookPaper} for details of this and our Definition \ref{definition:Intl}}. The realisation is extended to act on arbitrary modal formulas by preserving the logical structure thus commuting with the Boolean connectives. The modality $\Box$ is mapped to an arithmetisation of ``is provable in the base theory $T$'' for some fixed base theory $T$ and likewise will $\rhd$ be mapped to an arithmetisation of interpretability. The interpretability logic of a theory $T$ is now defined as usual being the set of modal formulas whose realisation is provable in $T$ regardless on the exact nature of the realisation:

\begin{definition}\label{definition:Intl}
Let $T$ be a theory in the language of arithmetic that is strong enough to allow for a proper treatment of formalised syntax. We define the \emph{interpretability logic} of $T$ as
\[
\intl{T} \ := \ \{ \varphi \mid \forall * T \vdash \varphi^* \}.
\]
\end{definition}

Even though the notion of formalised interpretability is $\Sigma^0_3$-complete (\cite{Shavrukov:1997:ReflexiveInfinitelyManyAxioms}), for two classes of theories we have an elegant and decidable characterisation for the corresponding interpretability logic. We call a theory $T$ \emph{$\Sigma_1$-sound} if it only proves true $\Sigma_1$-sentences.

By \ilm we denote the logic that arises by adding Montagna's axiom scheme 
\[
\principle M \ := \ \ \ A \rhd B \rightarrow A \wedge \Box C \rhd B \wedge \Box C
\]
to \il. 
\begin{theorem}[Berarducci \cite{Berarducci:1990:InterpretabilityLogicPA}, 
Shavrukov\footnote{Shavrukov's arithmetical completeness proof is dated 1988 which is before the 
official publication of the modal completeness proof of de Jongh and Veltman 
\cite{JonghVeltman:1990:ProvabilityLogicsForRelativeInterpretability} and Visser's simplification 
thereof using a single accessibility relation $S$ to model the $\rhd$ modality 
\cite{Visser:1988:preliminaryNotesOnInterpretabilityLogic}. However, these results were already 
available in preprint form as \emph{D.H.J.~de~Jongh, F.J.M.M.~Veltman. Provability logics for relative 
interpretability. ITLI Prepublication Series ML-88-03 (1988)}; and \emph{A.~Visser. Preliminary 
notes on interpretability logic. Logic Group Preprint Series No.29 (1988);} respectively.} 
\cite{Shavrukov:1988:InterpretabilityLogicPA}]\label{theo:shav}
If $T$ is $\Sigma_1$-sound and proves full induction, then $\intl{T}=\ilm$.
\end{theorem}

The logic \ilp arises by adding the axiom scheme
\[
\principle P \ := \ \ \ A \rhd B \rightarrow \Box (A \rhd B)
\]
to the basic logic \il. The logic \ilp is related to finitely axiomatised theories that can prove the totality of ${\tt supexp}$, where ${\tt supexp}(x)$ is defined as $x\mapsto 2^x_x$ 
with $2^n_0:=n$ and $2^{n}_{m+1}:= 2^{(2^n_m)}$.

\begin{theorem}[Visser \cite{Visser:1990:InterpretabilityLogic}]
If $T$ is $\Sigma_1$-sound, finitely axiomatised and proves the totality of ${\tt supexp}$, then $\intl{T}=\ilp$.
\end{theorem}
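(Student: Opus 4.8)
The plan is to establish the two inclusions $\ilp \subseteq \intl{T}$ (soundness) and $\intl{T} \subseteq \ilp$ (completeness) separately, the former being a direct verification and the latter requiring an arithmetical embedding of a refuting model.

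For soundness I would fix an arbitrary realisation $*$ and check that $T$ proves $\varphi^*$ for every axiom $\varphi$ of \ilp, together with the fact that Modus Ponens and Necessitation preserve this. The modal axioms {\sf L1}--{\sf L3} are exactly the L\"ob axioms, whose arithmetical validity is the soundness half of the Solovay completeness theorem for \gl and carries over verbatim. The principles {\sf J1}--{\sf J5} are the formalised basic closure properties of the interpretability predicate (identity interpretation, composition, disjunction of cases, the consistency implication, and the Henkin construction) and hold in any theory strong enough to formalise syntax; here I would simply invoke the standard formalisation. The only axiom that genuinely uses the hypotheses on $T$ is \principle{P}. For it I would use the Orey--H\'ajek style characterisation of interpretability for finitely axiomatised theories proving the totality of ${\tt supexp}$: under these hypotheses the statement ``$T+A^*$ interprets $T+B^*$'' is equivalent to a $\Sigma_1$ sentence, so that $A^* \rhd B^* \to \Box(A^* \rhd B^*)$ becomes an instance of provable $\Sigma_1$-completeness.

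For completeness I would argue contrapositively: assuming $\ilp \nvdash \varphi$, I produce a realisation $*$ with $T \nvdash \varphi^*$. The first ingredient is the modal completeness of \ilp with respect to its finite Veltman semantics, due to de Jongh and Veltman, which yields a finite \ilp-Veltman model $M$ with a world $w$ at which $\varphi$ fails. The second ingredient is a Solovay-style embedding of $M$ into arithmetic: inside $T$ I would define a primitive recursive ``trace'' function that attempts to climb through the frame of $M$ guided by $T$-provability, form its limit sentences, and realise each propositional variable as the disjunction of the limit sentences of the worlds that force it. The heart of the matter is the embedding lemma, asserting that forcing at a world is mirrored by $T$-provability of the corresponding conditional, from which $T \nvdash \varphi^*$ follows by examining the root.

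The main obstacle is the $\rhd$ clause of the embedding, which has no counterpart in the plain provability-logic Solovay argument. To force the arithmetical reading of $A \rhd B$ to agree with the Veltman-semantic clause (existence of an appropriate $S$-related world), I would again lean on the characterisation of interpretability available for finitely axiomatised theories proving ${\tt supexp}$: the totality of ${\tt supexp}$ is precisely what licenses the definable-cut and cut-shortening constructions needed to turn a model-theoretic $S$-step into an actual interpretation, and conversely to read an interpretation back as such a step. Verifying that this correspondence is uniform enough to be carried out provably in $T$, and that it meshes with the L\"ob-style fixpoint defining the trace, is the delicate part; the remaining bookkeeping --- that the trace stabilises and that distinct worlds receive pairwise incompatible limit sentences --- is routine and parallels the \gl case.
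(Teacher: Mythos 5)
The paper does not reproduce a proof of this theorem at all --- it is stated as a black box with a citation to Visser's original article --- so the only comparison available is with the argument in the cited source. Your outline is essentially that standard argument: soundness by checking {\sf L1}--{\sf L3} and {\sf J1}--{\sf J5} directly and deriving \principle{P} from provable $\Sigma_1$-completeness (interpretability over a finitely axiomatised base is a $\Sigma_1$ statement), and completeness via de Jongh--Veltman modal completeness for finite \ilp-models followed by a Solovay-style embedding in which the $\rhd$-clause is handled through the Orey--H\'ajek/interpretation-existence machinery, with ${\tt supexp}$ licensing the cut constructions and $\Sigma_1$-soundness used at the root; this matches the intended proof, so no further comparison is needed.
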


Since $\ilm \neq \ilp$ it is very natural to ask for the core logic that is contained in the interpretability logic of any (strong enough) theory. This results in an additional quantifier in the definition of what is often called \emph{the interpretability logic of all reasonable arithmetical theories}:

\begin{definition}
\[
\ilal \ := \ \{ \varphi \mid \forall \, T\supseteq \idel{0} + \exp \ \forall * T \vdash \varphi^* \}.
\]
\end{definition}

Since it is well known and easy to see that all theorems of \il hold in any strong enough arithmetical theory, by the above two theorems we obtain that $\il \subseteq \ilal \subseteq (\ilp \cap \ilm)$. As to date, a modal characterisation of \ilal is unknown. Most principles in this paper have been considered because of their relation to \ilal.

%%%%%%%%%%%%%%%%%%%%%%%%%%%%%%%%%%%%%%%%%%%%%%%%%%%%%%%%%%%%%%%%%%%%%%%%%%%%%%%%%%%%%%%%%%%%%%
\subsection{Relational semantics}

We can equip interpretability logics with a natural relational semantics often referred to 
as Veltman semantics.

\begin{definition}\label{definition:VeltmanFrameAndModel}
A \emph{Veltman frame} is a triple $\la W, R, \{S_w : w\in W\}\ra$ where $W$ is a non-empty 
set of \emph{possible worlds}, $R$ a binary relation on $W$ so that $R^{-1}$ is transitive 
and well-founded. The $\{S_w : w\in W\}$ is a collection of binary relations on $R[w]$ 
(where $R[w]:= \{ v\mid wRv \}$).
%\\
%\textcolor{red}{\bf Mladen: Luka and I write $R[x]$ instead of $x\uparrow.$}\\
%\joost{You are a majority so I adopt your notation :-) Later I will go through the text and replace it.}
%\vskip 2ex
The requirements are that the $S_w$ are reflexive and transitive and the restriction of $R$ 
to $R[w]$ is contained in $S_w$, that is $R\cap R[w] \subseteq S_w$. 

A \emph{Veltman model} consists of a Veltman frame together with a valuation 
$V : {\tt Prop} \to \mathcal P (W)$ that assigns to each propositional variable 
$p\in {\tt Prop}$ a set of worlds $V(p)$ in $W$ where $p$ is stipulated to be true. This 
valuation defines a forcing relation $\Vdash \ \subseteq W{\times} {\sf Form}$ telling us 
which formulas are true at which particular world:
\[
\begin{array}{rll}
w\Vdash p & :\Leftrightarrow & w\in V(p);\\
  w\Vdash \bot & &\mbox{ for no $w\in W$};\\
w\Vdash A\to B & :\Leftrightarrow & w\nVdash  A \mbox{ or } w\Vdash B;\\
w\Vdash \Box A & :\Leftrightarrow & \forall v\ ( wRv \Rightarrow v\Vdash  A);\\
w\Vdash A\rhd B & :\Leftrightarrow & \forall u\ \Big( wRu \ \& \ u\Vdash A \Rightarrow 
\exists v \ (uS_w v \ \& \ v\Vdash  B)\Big).
\end{array}
\]
For a Veltman model $\mathfrak{M} = \la W, R, \{S_w : w\in W\}, V\ra$, we shall write 
$\mathfrak{M} \models A$ as short for $(\forall \, w \in W)\ \mathfrak{M}, w \Vdash A$.
\end{definition}

De Jongh and Veltman proved that the logic \il is sound and complete with respect to all Veltman models 
(\cite{JonghVeltman:1990:ProvabilityLogicsForRelativeInterpretability}).

Often one is 
interested in considering all models that can be defined over a frame. Thus, given a frame 
$\mathfrak F$ and a valuation $V$ on $\mathfrak F$ we shall denote the corresponding model by 
$\la \mathfrak{F}, V \ra$. A \emph{frame condition} for an axiom scheme $A$ is a formula 
$(A)$ (first or higher-order) in the language $\{ R, \{S_w : w\in W\} \}$ so that $\mathfrak{F} 
\models (A)$ (as a relational structure) if and only if $\forall^{\sf valuation} V\ \la 
\mathfrak{F},V\ra \models A$.

\section{Generalised Veltman Semantics}

%\subsection{Generalised Veltman semantics}

For certain purposes, Veltman semantics is not fine-grained enough. Generalised semantics was originally introduced by Verbrugge \cite{Verbrugge} in 1992 to determine independence 
between certain interpretability logics as we shall discuss in the next section.

\subsection{Replacing worlds by sets of worlds}
The idea of generalised Veltman semantics is that we will use sets to model the $\rhd$ modality. To be more precise, instead of having the $S_w$ be a relation between worlds, we will use a relation between worlds and \emph{sets of worlds}. Thus, we would have things like $uS_wV$ where $V$ is a set of worlds. The forcing relation would be defined accordingly:
\[
w\Vdash A\rhd B \ :\Leftrightarrow \ \forall u \Big( wRu \ \& \ u \Vdash A \Rightarrow \exists V (uS_wV \ \& \ V\Vdash B)  \Big),
\]
where $V\Vdash B$ is short for $(\forall \, v{\in}V\,) v\Vdash B$. 

In doing so, the axiom scheme $A\rhd B \to (\Diamond A \to \Diamond B)$ imposes\footnote{See \cite{kurahashi2020modal} for a more detailed discussion.} that all of $V$ should be $R$-above $w$. The axiom scheme $\Box (A\to B) \to A\rhd B$ requires that the $S_w$ relation is what we call \emph{semi-reflexive} in the sense that $uS_w\{ u \}$ whenever $wRu$.  The axiom scheme $\Diamond A \rhd A$ imposes that whenever $wRuRv$, then $uS_w\{ v \}$. Just like in regular Veltman semantics, the axiom scheme $(A\rhd C) \wedge (B\rhd C) \to A\vee B \rhd C$ does not impose any requirement on generalised Veltman semantics and is satisfied automatically. It turns out that there is quite some freedom in how to account for the axiom scheme $(A\rhd B) \wedge (B\rhd C) \to A \rhd C$. One such choice is the predominant one in the literature and we shall give it here and fix it for the remainder of this paper. The definition was already given in the original document \cite{Verbrugge} by Verbrugge. Variations will be discussed in the next subsection.

\begin{definition}
A \emph{generalised Veltman frame} $\mathfrak F$ is a structure $\la W,R,\{S_w:w\in W\} \ra$, 
where $W$ is a non-empty set, $R$ is a transitive and converse well-founded binary relation on 
$W$ and for all $w\in W$ we have:
	\begin{itemize}
		\item[a)] $S_w\subseteq R[w]\times \left(\mathcal{P}( 
		{R[w]})\setminus\{\emptyset\}\right)$;
		\item[b)] $S_w$ is quasi-reflexive: $wRu$ implies $uS_w\{u\}$;
		\item[c)] $S_w$ is quasi-transitive: if $uS_wV$ and $vS_wZ_v$ for all $v\in V$, then\\
		 $uS_w(\bigcup_{v\in V}Z_v)$;
		\item[d)] if $wRuRv$, then $uS_w\{v\}$;
		\item[e)] monotonicity: if $uS_wV$ and $V\subseteq Z\subseteq R[w]$, then $uS_wZ$.
	\end{itemize}
A \emph{generalised Veltman model} is a quadruple 
$\mathfrak{M}=\la W,R, \{S_w : w\in W \}, V\ra$, where the first three components form a 
generalised Veltman frame and where $V$ is a valuation mapping propositional variables to subsets 
of $W$. 
The forcing relation $\mathfrak{M}, w \Vdash A$ is defined as before in Definition 
\ref{definition:VeltmanFrameAndModel} with the sole difference that now
\[
w\Vdash A\rhd B \ : \Longleftrightarrow \ \forall u\ \Big(wRu \ \& \ u\Vdash A 
\Rightarrow \exists V (uS_w V \ \& \ V\Vdash B)\Big).
\]
%In this definition we write $V \Vdash B$ as short for 
%$(\forall \, v{\in}V) \ v \Vdash B$.
\end{definition}

It is easy to see that GVS is adequate for \il:

\begin{theorem}\label{theorem:GVSSoundAndComplete}
The logic \il is sound and complete w.r.t.~GVS.
\end{theorem}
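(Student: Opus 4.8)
The plan is to treat soundness and completeness separately, and to obtain completeness cheaply by \emph{transferring} it from ordinary Veltman semantics rather than by building a fresh canonical model for GVS. For soundness I would verify directly that each axiom scheme is valid on every generalised Veltman frame and that the two rules preserve validity. The schemes ${\sf L1}$–${\sf L3}$ and Necessitation concern only $\Box$, whose clause is identical to the one in ordinary Veltman (and \gl) semantics; since $R$ is transitive and converse well-founded, their validity is exactly the standard \gl{} soundness argument.

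For the interpretability schemes each frame condition does precisely one job. Quasi-reflexivity (b) validates ${\sf J1}$ (if $w\Vdash\Box(A\to B)$, then for any $wRu$ with $u\Vdash A$ we get $u\Vdash B$, so $\{u\}$ is a witness). Quasi-transitivity (c) validates ${\sf J2}$ (from a witness $V$ for $A\rhd B$ at $u$ and witnesses $Z_v$ for $B\rhd C$ at each $v\in V$, the union $\bigcup_{v\in V}Z_v$ witnesses $A\rhd C$). Condition (a) validates ${\sf J4}$ (a witnessing set is nonempty and contained in $R[w]$, so it supplies a $\Diamond B$-point). Condition (d) validates ${\sf J5}$ (unfolding $\Diamond A$ at $u$ yields $v$ with $uRv$ and $v\Vdash A$, and $\{v\}$ is a witness). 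As classically, ${\sf J3}$ needs no condition: the witness is chosen according to which disjunct holds at $u$. Monotonicity (e) is not required for validating \il{} but is harmless.

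For completeness I would not construct a generalised canonical model directly; instead I would invoke the de Jongh–Veltman completeness theorem for ordinary Veltman semantics, quoted above. Given $\not\vdash_{\il}A$, that theorem supplies an ordinary Veltman model $\mathfrak M=\la W,R,\{S_w\},V\ra$ and a world $w_0$ with $\mathfrak M,w_0\nVdash A$. On the same underlying $\la W,R\ra$ I would define, for every $w$, a generalised relation by
\[
u\,S_w'\,Y \ :\Longleftrightarrow\ \emptyset\neq Y\subseteq R[w]\ \text{ and }\ \exists y\in Y\ (u\,S_w\,y),
\]
and set $\mathfrak M'=\la W,R,\{S_w'\},V\ra$. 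One checks that $\mathfrak M'$ is a genuine generalised Veltman model: (a) is built into the definition; (b) and (d) follow because the ordinary $S_w$ is reflexive on $R[w]$ and contains the restriction of $R$ to $R[w]$; (c) follows from transitivity of $S_w$ (pick the relevant $y\in Y$, then a witness $z\in Z_y$, and use $u\,S_w\,y\,S_w\,z$); and (e) is immediate since enlarging $Y$ keeps the witness. The core is a forcing-preservation lemma: for every formula $C$ and every world $x$, $\mathfrak M,x\Vdash C$ iff $\mathfrak M',x\Vdash C$, proved by induction on $C$. All Boolean and $\Box$ cases are trivial since only the $\rhd$-clause changed, and for $\rhd$ (evaluated at a world $w$) it suffices that
\[
\exists Y\ (u\,S_w'\,Y\ \&\ Y\Vdash B)\quad\Longleftrightarrow\quad \exists y\ (u\,S_w\,y\ \&\ y\Vdash B),
\]
the right-to-left direction taking $Y=\{y\}$, and the left-to-right direction extracting from a witnessing $Y\Vdash B$ the particular $y\in Y$ with $u\,S_w\,y$. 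Applying the lemma at $w_0$ gives $\mathfrak M',w_0\nVdash A$, so $A$ is refuted on a generalised Veltman model, and completeness follows by contraposition.

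The routine part is soundness together with the Boolean and $\Box$ steps of the induction; the only places needing genuine care are the verification of the five frame conditions for $S_w'$—quasi-transitivity (c) being the most delicate, as it is where the singleton-versus-union bookkeeping must line up with transitivity of the original $S_w$—and the $\rhd$-case equivalence in the preservation lemma, which is exactly where the definition of $S_w'$ pays off. No serious obstacle is expected, consistent with the paper's remark that adequacy of GVS for \il{} is ``easy to see.''
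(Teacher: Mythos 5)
Your proposal is correct and follows essentially the same route as the paper: soundness by direct verification of the axiom schemes against the frame conditions, and completeness by converting the de Jongh--Veltman countermodel into a generalised one via $uS_w'Y :\Leftrightarrow \exists y\in Y\ (uS_wy)$, together with a truth-preservation induction whose only nontrivial case is $\rhd$. Your verification of quasi-transitivity from transitivity of the original $S_w$ is exactly the ``small argument'' the paper alludes to, so nothing is missing.
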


\begin{proof}
Soundness follows from an easy check on the rules and all the axiom schemes. For completeness we reason as follows. Suppose $\il \nvdash A$. By using de Jongh and Veltman's theorem from \cite{JonghVeltman:1990:ProvabilityLogicsForRelativeInterpretability} we get a (regular) Veltman model $\mathfrak M = \langle W, R, \{ S_w : w\in W\},V \rangle$ and world $x\in W$ so that $\mathfrak M , x \Vdash \neg A$. 

We now transform $\mathfrak M$ into a generalised Veltman model $\mathfrak M'$ by only changing the $S_w$ relations so that $\mathfrak M' = \langle W, R, \{ S'_w : w\in W\},V \rangle$ and\footnote{We would like to emphasise that we use the letter $V$ to denote both the valuation and a subset of $W.$ 
In the following text, we will do this again several times. } 
\[
uS'_w V \ :\Leftrightarrow \ \exists \, v{\in}V\ uS_wv.
\]
It will be clear from the context what the letter $V$ means.

Quasi-transitivity requires a small argument but it is easy to see that this definition of $S'$ yields a generalised Veltman model. Furthermore, via an easy induction we can prove that for any formula $B$ and any $w\in W$ we have $\mathfrak M, w \Vdash B \Leftrightarrow \mathfrak M', w \Vdash B$. In particular $\mathfrak M', x \Vdash \neg A$ which completes the proof.
\end{proof}

The above proof tells us that any Veltman model can be transformed into a generalised Veltman model preserving truth. Verbrugge has proven that in certain cases, one can also go the other way around and we will discuss this in Subsection \ref{section:VeltmanSemanticsVersusGVS}.

\subsection{On quasi-transitivity}
As we mentioned before, there are quite some alternatives to a semantic requirement of the transitivity axiom scheme $(A\rhd B) \wedge (B\rhd C) \to A\rhd C$. We will now discuss some of them. In the next table, we should bear in mind that if we have $uS_xV$ for some $u,x$ and $V$, then this automatically implies that $V\neq \varnothing$.

% \begin{center}
% \begin{tabular}{l|c|l}
% Nr. & Semantic requirement for transitivity &\ \ \ \ mentioned in \\
%  \hline
% (1) & $uS_xY \to \forall \, \{ Y_y\}_{y\in Y} \Big(\forall\, y{\in}Y\ yS_xY_y \to \exists Z\ (Z\subseteq \bigcup_{y\in Y}Y_y \wedge uS_xZ)\Big)$ &  This paper\\
%  \hline
% (2) & $uS_xY \to \forall \, \{ Y_y\}_{y\in Y} \Big(\forall\, y{\in}Y\ yS_xY_y \to uS_x\bigcup_{y\in Y}Y_y\Big)$ &  Verbrugge\\
%  \hline
% (3) &$uS_xY \to \exists\, y{\in}Y\, \forall Y'(yS_xY' \to \exists \, Y''{\subseteq}Y' uS_xY'')$ & This paper \\
%  \hline
% (4) &$uS_xY \to \exists\, y{\in}Y\, \forall Y'(yS_xY' \to uS_xY')$ & Joosten '98 \cite{Joosten:1998:MasterThesis}, p.43  \\
% \hline
% (5) &$uS_xY \to \forall\, y{\in}Y\, \forall Y'(yS_xY' \to \exists \, Y''{\subseteq}Y' uS_xY'')$ & This paper  \\
% \hline
% (6) & $uS_xY \to \forall\, y{\in}Y\, \forall Y'(yS_xY' \to uS_xY')$ & Verbrugge '92 \cite{Verbrugge}  \\
% \hline
% (7) & $uS_xY \to \forall\, y{\in}Y\, \forall Y'(yS_xY'\wedge y{\notin}Y' \to \exists \, Y''{\subseteq}Y'\ uS_xY'')$ & This paper   \\
% \hline
% (8) & $uS_xY \to \forall\, y{\in}Y\, \forall Y'(yS_xY'\wedge y{\notin}Y' \to uS_xY')$ & Goris, Joosten '09 \cite{GorisJoosten:2011:ANewPrinciple}, p6, \cite{Joosten:2004:InterpretabilityFormalized}   \\
% \end{tabular}
% \end{center}

\begin{table}[H]
\label{table:quasi-trans}
\centering
\scriptsize
\begin{tabular}{c|l|l}
Nr. & Semantic requirement for transitivity & First mentioned in\\
\hline
(1) & \(uS_xY \Rightarrow  \forall  \, \{ Y_y\}_{y\in  Y} \Big((\forall \, y\in Y\ yS_xY_y) \Rightarrow  \exists  Z\subseteq  \bigcup_{y\in  Y}Y_y \  uS_xZ\Big)\) & This paper\\
(2) & \(uS_xY \Rightarrow  \forall  \, \{ Y_y\}_{y\in  Y} \Big((\forall \, y\in Y\ yS_xY_y) \Rightarrow  uS_x\bigcup_{y\in  Y}Y_y\Big)\) & Verbrugge '92  \cite{Verbrugge}\\
(3) & \(uS_xY \Rightarrow  \exists \, y\in Y\, \forall  Y'(yS_xY' \Rightarrow  \exists  \, Y''{\subseteq }Y' \ uS_xY'')\) & This paper\\
(4) & \(uS_xY \Rightarrow  \exists \, y\in Y\, \forall  Y'(yS_xY' \Rightarrow  uS_xY')\) & Joosten '98 \cite{Joosten:1998:MasterThesis}\\
(5) & \(uS_xY \Rightarrow  \forall \, y\in Y\, \forall  Y'(yS_xY' \Rightarrow  \exists  \, Y''{\subseteq }Y' \  uS_xY'')\) & This paper\\
(6) & \(uS_xY \Rightarrow  \forall \, y\in Y\, \forall  Y'(yS_xY' \Rightarrow  uS_xY')\) & Verbrugge '92 \cite{Verbrugge}\\
(7) & \(uS_xY \Rightarrow  \forall \, y\in Y\, \forall  Y'(yS_xY' \ \& \ y\notin Y' \Rightarrow  \exists  \, Y''{\subseteq }Y'\ uS_xY'')\) & This paper\\
(8) & \(uS_xY \Rightarrow  \forall \, y\in Y\, \forall  Y'(yS_xY' \ \& \ y\notin Y' \Rightarrow  uS_xY')\) & Goris, Joosten '09 \cite{GorisJoosten:2011:ANewPrinciple}\\

\end{tabular}
\caption{\label{tab:transitivity} Semantic conditions for quasi-transitivity mentioned in the literature.}
\end{table}

All of the presented quasi-transitivity requirements are adequate for proving
\il soundness  and completeness. For soundness it is routine to check that every
instantiation of ${\sf J2}$ holds. For the completeness part it is enough to see
that any ordinary Veltman model \(\mathfrak{M}=\langle W,R,\{ S_w: w\in W\}, V\rangle \)
can be transformed into a
generalised Veltman model \(\mathfrak{M}'=\langle W,R,\{ S'_w: w\in W\},V\rangle \)
where for all $w\in W$ we have \({S'_w\coloneqq \{\langle x,\{y\}\rangle :\langle x,y\rangle \in S_w\}}\)
and see\footnote{The argument works in virtue that in (1) and (2) we understand that each $y$ has a single $Y_y$ and would break down in case we allowed for various $Y_y^{(\alpha)}$ (yielding yet another semantic variation).} that
 $\mathfrak{M}'$ has the same associated forcing relation as $\mathfrak{M}$.
These properties have been verified in the proof assistant Agda and are presented with more detail in \cite{MasRovira:2020:MastersThesis}.

Agda (\cite{norell:thesis}) is a proof assistant based on a constructive type theory with dependent types that allows the paradigm of \textit{propositions as types} (\cite{wadler2015propositions}) via the Curry-Howard correspondence.

The following theorem tells us why Notion $(2)$ is in a sense the more natural one.
\begin{theorem} \label{org30d55aa} Let \(\mathfrak{F}=\langle W,R,\{ S_w: w\in W\}\rangle \)
be a generalised Veltman frame
satisfying quasi-transitivity Condition $(i)\in \{1, \ldots, 8\}$. Let \(\mathfrak{F}'=\langle W,R,\{ S'_w: w\in W\}\rangle \)
where for all $w\in W$ we define \(S'_w\) as the
monotone closure of \(S_w\):
\[S'_w\coloneqq \{\langle x,Y'\rangle  : \langle x,Y\rangle \in S_w, Y\subseteq Y'\subseteq R[w]\}.\]
Then \(\mathfrak{F}'\) is a generalised Veltman frame
satisfying quasi-transitivity Condition (2).
Furthermore for any formula $A$ and valuation $V$ with $\mathfrak{M} \coloneqq \langle \mathfrak{F},V\rangle$
and $\mathfrak{M'} \coloneqq \langle \mathfrak{F'},V\rangle$ we have that
\[
\mathfrak{M},w\Vdash A\ \mbox{ if and only if } \ \mathfrak{M}',w\Vdash A \ .
\]
\end{theorem}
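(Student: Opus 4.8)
The plan is to prove the two assertions in turn, and the workhorse throughout will be the observation that the monotone closure admits the characterisation
\[
u\,S'_w\,V \iff V\subseteq R[w]\ \text{and}\ \exists\, \hat V\subseteq V\ (u\,S_w\,\hat V).
\]
In particular $S_w\subseteq S'_w$, and $S'_w$ is monotone by construction. Every step below reduces a statement about $S'_w$ to one about the original $S_w$ by extracting such a witness $\hat V$, and then climbs back up using monotonicity of $S'_w$; notably, this never uses monotonicity of the original $S_w$, so the argument is insensitive to whether Condition $(i)$ is assumed together with monotonicity or not.

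First I would check that $\mathfrak{F}'$ is a generalised Veltman frame. Transitivity and converse well-foundedness are inherited because $R$ is untouched. For (a), a closure pair $\langle x,Y'\rangle$ satisfies $Y\subseteq Y'\subseteq R[w]$ with $Y\neq\emptyset$, so $Y'$ is a nonempty subset of $R[w]$. Conditions (b) and (d) hold because $S_w\subseteq S'_w$ transports the witnessing singletons $\{u\}$ and $\{v\}$ unchanged, and (e) is immediate from the definition of the closure. The substantive point is Condition (2) for $S'_w$: given $u\,S'_w\,Y$ and a family $\{Y_y\}_{y\in Y}$ with $y\,S'_w\,Y_y$ for every $y$, I would extract a witness $\hat Y\subseteq Y$ with $u\,S_w\,\hat Y$ and, for each $y$, a witness $\hat Y_y\subseteq Y_y$ with $y\,S_w\,\hat Y_y$, and then apply the hypothesis Condition $(i)$ in $\mathfrak{F}$ to $u\,S_w\,\hat Y$ together with the family $\{\hat Y_y\}_{y\in\hat Y}$.

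In each case the conclusion hands us a set that is $S_w$-related to $u$ and contained in $\bigcup_{y\in\hat Y}\hat Y_y\subseteq\bigcup_{y\in Y}Y_y\subseteq R[w]$ --- either an explicit subset $Z$, or $\hat Y_{y_0}$ for a suitable $y_0\in\hat Y$ --- whence monotonicity of $S'_w$ gives $u\,S'_w\,\bigcup_{y\in Y}Y_y$, which is exactly Condition (2). The eight cases collapse into four patterns (the union Conditions (1),(2); the ``some $y$'' Conditions (3),(4); the ``all $y$'' Conditions (5),(6); and the ``all $y$ with $y\notin Y'$'' Conditions (7),(8)), and within each pair the version producing a subset $Y''\subseteq Y'$ and the version producing $Y'$ itself are treated identically, since monotonicity erases the difference. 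I expect the main obstacle to be Conditions (7) and (8): their side condition $y\notin Y'$ leaves the hypothesis silent when every chosen witness $\hat Y_y$ contains its own index $y$. I would resolve this by a case split: if some $y_0\in\hat Y$ has $y_0\notin\hat Y_{y_0}$, apply (7)/(8) as above; otherwise $y\in\hat Y_y\subseteq Y_y$ for every $y\in\hat Y$, so $\hat Y\subseteq\bigcup_{y\in Y}Y_y$, and since $u\,S_w\,\hat Y$ already holds, monotonicity of $S'_w$ again delivers $u\,S'_w\,\bigcup_{y\in Y}Y_y$ directly.

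For the forcing equivalence I would induct on $A$; the Boolean and $\Box$ cases are immediate since $R$ and the valuation are unchanged. For $A=B\rhd C$, the left-to-right direction uses $S_w\subseteq S'_w$: any $S_w$-witness is already an $S'_w$-witness, and the induction hypothesis transfers the forcing of $B$ and of $C$ over the witnessing set. The right-to-left direction uses the closure characterisation to replace an $S'_w$-witness $V$ by some $\hat V\subseteq V$ with $u\,S_w\,\hat V$; here $V\Vdash C$ gives $\hat V\Vdash C$ because $\hat V\subseteq V$, and $\hat V\neq\emptyset$ by (a), so $\hat V$ is a legitimate $S_w$-witness. This yields the equivalence and completes the proof.
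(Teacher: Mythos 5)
Your proof is correct. The paper itself does not give an argument for this theorem---it defers entirely to the master's thesis \cite{MasRovira:2020:MastersThesis}---so there is no in-paper proof to compare against; your write-up supplies a complete, self-contained argument. The two points that genuinely need care are both handled properly: you correctly read the hypothesis as \emph{not} assuming monotonicity of the original $S_w$ (which is exactly what makes the theorem non-vacuous, as the paper's subsequent remark about monotone-by-definition frames indicates), and your case split for Conditions (7) and (8)---either some $y_0\in\hat Y$ has $y_0\notin\hat Y_{y_0}$ and the side condition can be invoked, or else $\hat Y\subseteq\bigcup_{y\in Y}Y_y$ and $u\,S_w\,\hat Y$ already witnesses the conclusion via the closure---is precisely the observation needed to make those two cases go through. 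The only step left implicit is that $\hat Y\neq\emptyset$ (needed to extract a $y_0$ in cases (3)--(8)), which follows from frame condition (a) for $S_w$ and is worth one line in a final write-up.
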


\begin{proof}
Details are presented in \cite{MasRovira:2020:MastersThesis}.
\end{proof}

% \textcolor{red}{\bf Mladen: This is a very strange theorem. The structure $\mathfrak{M}'$ is not necessarily GVM but we consider the forcing relation on that structure. I know what you want, but I have to emphasize that we defined the forcing relation only for GVS.}\\

As we see in Theorem  \ref{org30d55aa} taking the monotone closure 
of each $S_w$ does not change the forcing relation and 
the resulting frame satisfies quasi-transitivity Condition (2). 

Note that taking the monotone closure of each $S_w$ is essentially different than
assuming that each $S_w$ is monotone by definition, as then the forcing relation may change. In the following example we present a generalised Veltman model with Condition (8) that showcases such behaviour.

\begin{figure}[H]
\centering
\includegraphics[width=0.3\textwidth]{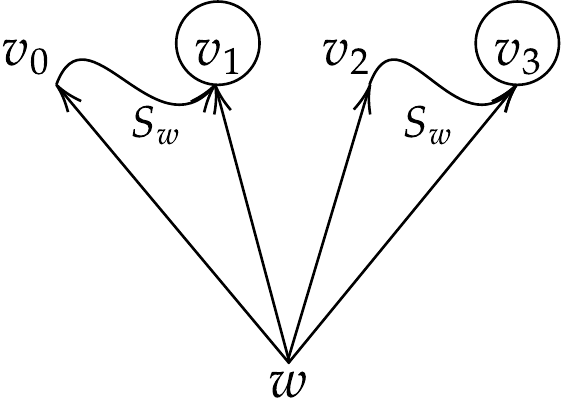}

\caption{\label{fig:org1323f27}Example frame: \(wRv_0,wRv_1,wRv_2,wRv_3\), \(v_0S_w\{v_1\}\), \(v_2S_w\{v_3\}\).}
% \caption{\label{fig:org1323f27}Example frame: \(wRv_{0,\ldots, 3}\), \(v_0S_w\{v_1\}\), \(v_2S_w\{v_3\}\).}
\end{figure}
\noindent

Let \(\mathfrak{M}\) be a model based on the frame displayed in Figure \ref{fig:org1323f27}
such that $V(p) = \{v_0\}$ and $V(q) = \{v_2\}$ (i.e.\ ${\{x:x\Vdash p\}  = \{v_0\}}$, \(\{x:x\Vdash  q\}  = \{v_2\}\)).
We see that \(w\Vdash \neg (p \rhd  q)\) as
\(p\) is only true in \(v_0\) and we only have \(v_0S_w\{v_1\}\) with \(v_1\nVdash q\). If we assume that the relation \(S_w\) is monotone then we have \(v_0S_w \{v_1, v_2\}\) and by
quasi-transitivity (8) we get \(v_0S_w \{v_3\}\). Consequently
\(w\Vdash \neg (p \rhd  q)\) is no longer true.

\subsection{Veltmans semantics versus GVS}\label{section:VeltmanSemanticsVersusGVS}

The completeness proof of GVS (Theorem \ref{theorem:GVSSoundAndComplete}) tells us that any Veltman model can be transformed into a generalised Veltman model preserving truth. Verbrugge has proven that one can also go the other way around for quasi-transitivity Notion 6 from Table \ref{tab:transitivity}. Below we write $\Vdash$ and $\Vdash'$ instead of $V$ and $V'$ and their respective extensions.
\newcommand{\hookdoubleheadrightarrow}{%
  \hookrightarrow\mathrel{\mspace{-15mu}}\rightarrow
}
\begin{theorem}(Verbrugge \cite{Verbrugge})\label{theorem:FromGeneralisedToRegular}
Let $\la W, R, \{ S_w: w\in W\}, V \ra$ 
be a generalised Veltman model with quasi-transitivity Condition $(i)\in\{3,4,5,6\}$ (see Table \ref{table:quasi-trans}). There is a (regular) Veltman model $\la W', R', \{ S'_w: w\in W'\}, V' \ra$
and a map $f : W \to \wp (W')$ so that for each $w\in W$ and each $w'\in f(w)$ we have for any formula $B$ that 
\[
\mathfrak{M}, w \Vdash B \ \ \mbox{ if and only if } \ \ \mathfrak{M'}, w' \Vdash' B.
\]
Here $\Vdash$ is the forcing relation in $\mathfrak{M}$ based on $V$ and $\Vdash'$ is the forcing relation in $\mathfrak{M}'$ based on $V'$.
\end{theorem}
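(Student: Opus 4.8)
The plan is to build the ordinary Veltman model $\mathfrak{M}'$ by \emph{resolving} each witnessing set of the generalised model into a single representative world, and to let $f(w)$ collect all worlds of $W'$ built on top of $w$. The core difficulty to keep in mind throughout is an asymmetry in the clause for $\rhd$. Passing from $\mathfrak{M}$ to $\mathfrak{M}'$ is easy: if $uS_wV$ with $V\Vdash B$, then every $v\in V$ already forces $B$ and can serve as a single $S'_w$-successor. The delicate direction is the converse, where a single $S'_w$-successor forcing $B$ in $\mathfrak{M}'$ must be matched by an \emph{entire} set $V$ with $uS_wV$ and $V\Vdash B$. The whole construction is engineered so that each world of $W'$ records enough information to reconstruct such a set, and it is precisely here that the quasi-transitivity Conditions $(3)$--$(6)$ are used.

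Concretely, I would take the worlds of $W'$ to be finite sequences that trace a path of $S$-choices through $\mathfrak{M}$: a world $w'$ carries an underlying world $\pi(w')\in W$ together with the history of sets and selected representatives used to reach it. The valuation is pulled back along $\pi$, so that $w'$ and $\pi(w')$ agree on propositional variables, and I set $f(w):=\{\,w'\in W':\pi(w')=w\,\}$, which is nonempty since the length-one sequence on $w$ lies in it. The relation $R'$ is inherited from $R$ through $\pi$, so that every $R'$-step projects to a genuine $R$-step; transitivity and converse well-foundedness of $R'$ then follow from those of $R$, because an infinite ascending $R'$-chain would project to an infinite ascending $R$-chain. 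The relation $S'_w$ links a world to the single representative selected out of each set $V$ with $uS_wV$: Conditions $(3)$ and $(4)$ guarantee that \emph{some} good representative $y\in V$ exists, whereas Conditions $(5)$ and $(6)$ allow \emph{any} $y\in V$ to be used. Quasi-reflexivity and the requirement $R\cap R[w]\subseteq S'_w$ come from clauses (b) and (d) of the generalised frame, and transitivity of $S'_w$ is obtained by iterating the chosen Condition $(i)$ along the recorded history; for the existential/subset cases this uses monotonicity (e) to turn a pulled-back subset $Y''\subseteq Y'$ into $uS_wY'$.

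With the model in place I would prove the Truth Lemma: for every formula $B$, every $w\in W$ and every $w'\in f(w)$, $\mathfrak{M},w\Vdash B$ iff $\mathfrak{M}',w'\Vdash' B$. The Boolean and $\Box$ cases are routine, using that $R'$ mirrors $R$ through $\pi$ and that $f$ is compatible with $R$-successors. The $\rhd$-case splits into the two directions noted above. For the easy direction, a witnessing set $V$ in $\mathfrak{M}$ yields, by the induction hypothesis, a single representative in $\mathfrak{M}'$ forcing $B$. For the hard direction, a single $S'_w$-successor $v'$ of $u'$ with $v'\Vdash' B$ must be converted back into a set $V$ with $uS_wV$ and $V\Vdash B$; the history recorded in $v'$ together with the chosen quasi-transitivity condition produces exactly such a $V$, and the induction hypothesis, applied to every element of $V$ via $f$, upgrades ``$v'\Vdash' B$'' to ``$V\Vdash B$''.

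I expect the main obstacle to be this hard direction of the $\rhd$-case, and in particular verifying that $S'_w$ is genuinely transitive while still faithfully reflecting the generalised $S_w$. The existential Conditions $(3)$ and $(4)$ only furnish \emph{some} good representative, and $(3)$ only a \emph{subset} of the target set, so closing the construction up under transitivity and monotonicity without losing the correspondence is the technically demanding point; the universal Conditions $(5)$ and $(6)$ are more forgiving but require checking that the representative-based $S'_w$ does not create spurious $\rhd$-successors. Getting the sequence-histories to interact correctly with nested $\rhd$-modalities, so that a single induction handles all four conditions uniformly, is where I would expect to spend most of the effort.
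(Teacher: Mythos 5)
Your overall architecture --- new worlds built from a base world of $W$ plus selection data, valuation pulled back along a projection $\pi$, $f(w)$ the fibre over $w$, truth lemma by induction with the $\rhd$-clause as the crux --- is the same as Verbrugge's construction as reproduced in the paper. But the data you attach to a new world, a \emph{finite history} of the sets and representatives ``used to reach it'', is not enough, and the gap shows exactly at the step you flag as hard. In the paper the worlds of $W'$ are pairs $\la x, A\ra$ with $A\in{\sf SR}(x)$, where $A$ is a \emph{complete} system of representatives: for \emph{every} $u$ and \emph{every} $V$ with $xS_uV$ --- whether or not that pair is ever encountered on any path --- $A$ designates some $v\in V$, and conversely every pair in $A$ is justified by some such $V$. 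Completeness is indispensable for the direction $\mathfrak M',w'\Vdash' A\rhd B\Rightarrow\mathfrak M,w\Vdash A\rhd B$, which is handled by contraposition: if $w\nVdash A\rhd B$, there is $u$ with $wRu$ and $u\Vdash A$ such that every $V$ with $uS_wV$ contains some $v_V\nVdash B$, and one chooses the copy $\la u,A_u\ra\in f(u)$ whose selection designates precisely such a $v_V$ in each $V$, so that \emph{every} $S'$-successor of that copy fails $B$. A history recording only the finitely many choices made en route to $u'$ cannot supply designated ``bad'' representatives for the (typically infinitely many) $S_w$-images of $u$ that were never visited, so the copy of $u$ needed for this argument need not exist in your model. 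Your alternative plan for this direction --- reconstructing a set $V$ with $uS_wV$ and $V\Vdash B$ from the history of a single successor $v'$ --- would fail even with complete selection data: knowing that the designated representative of $V$ forces $B$ says nothing about the remaining elements of $V$.

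Two further points. First, what actually replaces your ``histories interacting with nested $\rhd$'' is a pair of compatibility clauses built into $R'$ and $S'$ (in the paper, the conditions of the form $\la w,z\ra\in B\Rightarrow\la w,z\ra\in A$), which propagate selections downward; these are what make $S'$ transitive, in combination with the chosen quasi-transitivity condition. Second, you should not invoke monotonicity (clause~(e)) when working under Conditions $(3)$--$(5)$ of Table \ref{tab:transitivity}: those rows are precisely the variants in which the monotone closure is \emph{not} assumed (otherwise $(3)$ and $(5)$ would collapse into $(4)$ and $(6)$ and there would be nothing to extend), which is why the subset quantifier $\exists\, Y''\subseteq Y'$ appears there; the extension of Verbrugge's theorem from Condition $(6)$ to $(3)$, $(4)$ and $(5)$ has to be argued without it.
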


\begin{proof}
We refer the reader to \cite{MasRovira:2020:MastersThesis}
for details and mainly present the definition of $\mathfrak{M}'$ here as was given in \cite{Verbrugge}. We will define a regular Veltman model $\mathfrak{M}'$ out of generalised Veltman model $\mathfrak{M}$. The main idea is that we will take many copies of worlds in $\mathfrak{M}$. When we define some $x'S'_wy'$ we should take into account that the single worlds $x'$ and $y'$ from $\mathfrak{M}'$ somehow come from worlds $x$ and $y$ from $\mathfrak{M}$ where these $x$ and $y$ fulfilled many roles as elements of images of the $S$ relation. To capture this richness of the generalised Veltman semantics, we shall choose some representatives from $S$ images. To this end, we first define for every world $x \in W$ a set ${\sf SR}(x)$ which contains all sets which are so-called \emph{$S$-representatives} in a sense that whenever $xS_uV$, then any $S$-representatives of $x$ will mention some non-zero number of elements of $V$. The formal definition reads as follows:
\[
\begin{array}{ll}
{\sf SR}(x) \ := \Big{\{} A\subset W\times W \mid  & \forall u \forall \, V {\subseteq} W \big( xS_uV \Rightarrow \exists\, v{\in} V \ \la u,v\ra \in A\big) \ \& \\
 & \forall u, v\ \Big( \la u,v\ra \in A \Rightarrow \exists \ V {\subseteq} W \big( xS_uV \ \& \ v\in V\big)\Big) \Big{\}}\ .
\end{array}
\]
We observe that a world $x$ will typically have many $S$-representatives. In the new model, we will consider all of them. Thus, we can now define the domain as
\[
W':= \{ \la x , A \ra \mid A{\in}{\sf SR}(x) \mbox{ or }  {\sf SR}(x) = \varnothing = A\}.\\
\]
To conclude, the relations are defined as 
\[
\begin{array}{rcl}
\la x,A\ra R' \la y,B \ra  &:\Leftrightarrow&  xRy \ \& \ \forall w,z  (wRx\ \& \ \la w,z\ra \in B \Rightarrow  \la w,z\ra \in A);\\
\mbox{}\\
\la x, A\ra S'_{\la w,C\ra} \la y,B \ra & :\Leftrightarrow & 
\la w,C\ra R'\la x, A\ra \ \& \  \la w,C\ra R'\la y, B\ra \ \& \\
 & &
 \mbox{}\hskip 2em \forall v\ \big( \la w,v \ra \in B \Rightarrow \la w,v \ra \in A \big);
\end{array}
\]
and finally $\la x,A\ra \Vdash p \ :\Leftrightarrow \ x \Vdash p$. Verbrugge proved that $\mathfrak{M}'$ indeed defines a regular Veltman model and that moreover, for each formula $A$, for each world $x$ and for each $V\subseteq W$ so that $\la x, V \ra \in W'$ we have $x\Vdash A \ \Leftrightarrow \ \la x,V \ra \Vdash' A$.

\end{proof}

Verbrugge showed the above theorem for generalised Veltman models with quasi-transitivity Condition (6). We have slightly improved the result by showing that it also holds for Conditions (3), (4) and (5). The above proof, together with a substantial simplification, has been fully formalised in the proof assistant Agda and is presented in \cite{MasRovira:2020:MastersThesis}. %Moreover, it is shown that the theorem also holds for slight variations of GVS as exposed in Table \ref{tab:transitivity} below. 
Vukovic studies\footnote{The proof in \cite{Vukovic08} contains a minor typo/error and in \cite{MasRovira:2020:MastersThesis} this is addressed.} in \cite{Vukovic08} how obtaining a Veltman model from a Generalised Veltman model can be performed for the, by now standard, transitivity Condition 2.
\ \\
\medskip

The above observations tell us that when it comes to models, regular Veltman semantics and generalised Veltman semantics are equally powerful. With respect to frames the panorama is very different. Before we make this precise, let us first discuss frame conditions for GVS.

Let \principle{X} be a modal axiom scheme. We denote by \kgen{X} a formula of first-order or 
higher-order logic  such that for all generalised Veltman frames $\mathfrak{F}$ the following 
holds:
$$  \mathfrak{F}\Vdash \mathsf{X} \ \mbox{ if and only if } \ 
    \mathfrak{F}\models \kgen{X} .$$
The formula \kgen{X} is called characteristic property (or frame condition) 
of the given logic \il{X}. 
The class of all generalised Veltman frames $\mathfrak{F}$ such that 
$\mathfrak{F}\models \kgen{X}$ is called the 
characteristic class of generalised frames for \il{X}.
If $\mathfrak{F}\models \kgen X$ we also say that the frame $\mathfrak F$ possesses the property 
\kgen{X}. We say that a generalised Veltman model $\mathfrak{M}=\la W,R,\{ S_w:w\in W\},V\ra$ 
is an \ilgen{X}-model, or that model $\mathfrak{M}$ possesses the property \kgen{X}, if 
the frame $\la W,R,\{S_w:w\in W\}\ra$ possesses the property \kgen{X}. 

Vukovi\'{c} \cite{Vukovic08} studied in a general setting how to transform a generalised Veltman model to an ordinary 
Veltman model much in the spirit of Theorem \ref{theorem:FromGeneralisedToRegular}, such that these two models are bisimilar (in some aptly defined sense). Such a program can only yield partial answers w.r.t.~frames since, as we shall see in Section \ref{section:ModalCompleteness},
the logic \extil{P_0} is complete w.r.t.\ generalised semantics, but incomplete w.r.t.\ ordinary semantics.

\section{Generalised Veltman semantics for separating systems}

In this section we briefly mention some results where GVS has been used to prove independence of various systems. 
However, what makes GVS really interesting in our opinion, are its good 
model-theoretical properties. We will discuss those in later sections.

\subsection{Principles and Veltman models}
\label{sec:principles_models}
In  \cite{Visser:1990:InterpretabilityLogic}, Visser studies among others relations between 
various extensions of the basic interpretability logic \il. Among others, he considered the 
following principles: 
\[
\begin{array}{rll}
\principle{W} &:= & A\rhd B\to A\rhd B\wedge\Box\neg A ;\\
\principle{KW1}& := & A\rhd \Diamond \top\rightarrow \top \rhd \neg A; \\
\principle{F}&:= & A\rhd\Diamond A\to\Box\neg A.
\end{array}
\]
Visser observed\footnote{Even though indeed \principle{KW1} is similar in flavour, it turned out that its frame-condition is actually, contrary to what Visser thought and so announced in \cite{Visser:1990:InterpretabilityLogic}, different from that of \principle{W}. \v{S}vejdar computed and published the corrected condition in \cite{svej91}: for each $wRy$ there exists $x{\in} \mathrm{M}(w)$ such that $yS_w x,$  where $\mathrm{M}(w):=\{ x{\in} R[w] :$ there is no $z{\in} R[w]$ such that $x\,(S_w{\circ} R)\, z\}.$}
that all of \principle{W}, \principle{F} and \principle{KW1} define proper 
extensions of \il that have the same frame condition w.r.t.~Veltman semantics: for each $w$, the 
relation $R\circ S_w$ should be conversely well-founded. Further, he noted that $\extil{W}\vdash 
\principle{KW1}$ and $\extil{W}\vdash \principle{F}$ (already in 
\cite{Visser:1988:preliminaryNotesOnInterpretabilityLogic}) and he posed as an open question 
if the converse also holds (over \il). As a mere curiosity it was mentioned that a slight 
weakening of \principle{F} does not yield any extension of \il. We repeat that here: if we 
take the contraposition $\Diamond A \to \neg (A\rhd \Diamond A)$ of \principle{F} and replace 
the implication by an interpretability modality we obtain an \il provable 
formula\footnote{Principle \principle{K10} in 
\cite{Visser:1988:preliminaryNotesOnInterpretabilityLogic}.}: $\Diamond A \rhd \neg 
(A \rhd \Diamond A)$. 

Another family of principles studied in \cite{Visser:1988:preliminaryNotesOnInterpretabilityLogic} 
is given by:
\[
\begin{array}{rll}
\principle{M}&:= & A\rhd B\to A\wedge\Box C\rhd B\wedge\Box C;\\
\principle{KM1}& := & A\rhd \Diamond B\rightarrow \Box (A\rightarrow \Diamond B);\\ 
\principle{KM2}& 
:=& A\rhd B\rightarrow \big(\Box (B\rightarrow \Diamond C)\rightarrow \Box (A\rightarrow 
\Diamond C)\big).
\end{array}
\]

It was observed that all of \principle{M}, \principle{KM1} and \principle{KM2} have the same frame 
condition w.r.t.~Veltman semantics: $yS_wzRu \ \Rightarrow \ yRu$. Similar to the previous family, it was 
observed that $\ilm\vdash \principle{KM1}, \principle{KM2}$ and posed in 
\cite{Visser:1988:preliminaryNotesOnInterpretabilityLogic,Visser:1990:InterpretabilityLogic} as an 
open question if the converse also holds (over \il). For this family it was proven in 
\cite{Visser:1988:preliminaryNotesOnInterpretabilityLogic}  that \principle{KM1} and 
\principle{KM2} are interderivable over \il. Moreover, just as \principle{W} follows from 
\extil{M}, we also have that \principle{KW1} follows from \extil{KM1}.

\v{S}vejdar in 1991 took up the above mentioned questions of Visser's whether certain reversals 
like $\extil{KM1}\vdash \principle{M}$ hold. Ordinary Veltman models were suitable to distinguish 
all combinations of the following principles 
of interpretability \cite{svej91}:
\principle{W}, \principle{M}, \principle{KM1}, \principle{KW1}, 
\principle{KW1^0}, and \principle{F}, where 
\[
\principle{KW1}^0 \ := \ A\wedge B\rhd\Diamond A \rightarrow A 
\rhd(A\wedge \neg B).
\]
Unlike most other proofs of independence results which rely on differences in characteristic 
classes, parts of his proofs are based on exhibiting particular \textit{models} that globally
satisfy one of the principles in question. 
For example, his proof that $\extil{\{F, KW1\}} \nvdash \principle{KW1}^0$ exhibits an 
\extil{F}-model that globally satisfies \principle{KW1}, but refutes $\principle{KW1}^0$.
As a consequence, \v{S}vejdar established that the logics \extil{F}, \extil{KW1} and \extil{KM1} 
are incomplete with respect to their class of frames. 

\subsection{Generalised frame conditions and independence}
\label{section:Independence}

In the previous subsection we saw various principles having the same frame condition. However, their frame conditions for GVS differ and as such this provides a way of telling different logics apart. In this section we will simply present a collection of generalised frame conditions and leave it as an easy exercise that they are all different from each other.
Verbrugge \cite{Verbrugge} determined \kgen M, \kgen{KM1}, and \kgen P:
$$\begin{array}{rcl}
\kgen M & := &  uS_w V \Rightarrow (\exists V' \subseteq V)( uS_w V' \ \& \ R[V'] 
\subseteq R[u]); \\
\mbox{}\\
\kgen {KM1} & := & uS_w V \Rightarrow (\exists v\in V) \forall z (vRz \Rightarrow uRz);\\
\mbox{}\\
\kgen P & := & wRw'RuS_w V \Rightarrow (\exists V' \subseteq V)\ uS_{w'} V'. 
\end{array}$$
She proved 
$\extil{KM1}\nvdash \principle{M}$, 
$\extil{F}\nvdash \principle{W}$, and
$\extil{F}\nvdash \principle{KW1}$ using GVS. 

There are two more principles that frequently occur in the literature. First, there is 
\[
\principle{M}_0 \ := \ A\rhd B\to\Diamond A\wedge\Box C\rhd B\wedge\Box C.
\]
And second, there is
\[
\principle{W}^*\ := \ A\rhd B\rightarrow B\wedge \Box C\rhd B\wedge \Box C \wedge \Box \neg A .
\]
Visser showed in \cite{Visser:1991:FormalizationOfInterpretability} that $\extil{W}\not\vdash \principle{M}_0$ and that $\extil{M_0W} = \extil{W^*}$. 

Vukovi\'{c} in \cite{Vukovic96} determined the formula $\kgen{M$_0$}:$ 
$$\kgen{M$_0$}\ := \ wRuRxS_wV\ \Rightarrow \ (\exists V'\subseteq V) 
(uS_wV' \ \& \ R[V']\subseteq R[u]).$$
and proved independence of the principle $\principle{M}_0$ with various others principles of 
interpretability.
%\[ \sf M_0,\,M,\,P,\,F,\,W,\,W^*,\,KM1,\,KM2,\,KW1,\,\text{and}\,KW1^0.\]
All connections between principles \principle{M}, \ \principle{M}$_0,$ \ \principle{KM1}, \ 
\principle{KM2}, \ \principle{P}, \ \principle{W}, \ \principle{W}$^*$, \ \principle{KW1}$^0,$ \ 
\principle{KW1} \ and \ \principle{F} were determined in \cite{Vukovic99} using GVS. 
Vukovi\'{c} provided in \cite{Vukovic99} a comparative modal study of all these principles 
together using GVS. The result of this study can be summarized by the following diagram:

\vskip 2ex
\begin{picture}(120,60)
\put(26,54){\principle{P}} 
\put(27,52){\vector(0,-1){13}}

\put(44,54){\principle{M}} 
\put(44,52){\vector(-1,-1){13}}
\put(48,52){\vector(1,-1){13}}

\put(5,35){\principle{M_0}}
\put(23,36){\vector(-1,0){12}}

\put(25,35){\principle{W^\ast}} 
\put(27,33){\vector(0,-1){13}}

\put(58,35){\principle{KM1}} 
\put(67,36){\vector(1,0){13}}
\put(61,33){\vector(-1,-1){13}}

\put(82,35){\principle{KM2}} 
\put(79,36){\vector(-1,0){12}}

\put(25,16){\principle{W}} 
\put(30,17){\vector(1,0){13}}

\put(44,16){\principle{KW1^0}} 
\put(46,14){\vector(-1,-1){11}}
\put(50,14){\vector(1,-1){11}}

\put(33,0){\principle{F}} 
\put(58,0){\principle{KW1}}
\end{picture}

\vskip 3ex
\noindent
Joosten and Visser presented a new \ilal principle
\[
\principle{P}_0 \ :=  \ A\rhd\Diamond B\to\Box(A\rhd B)
\]
in Joosten's master thesis \cite{Joosten:1998:MasterThesis}. Using GVS but without establishing 
the frame condition for \principle{P_0} Joosten could prove that $\principle{W}, \principle{M_0}$ and 
$\principle{P_0}$ are maximally independent (no two imply the other).

Goris and Joosten considered the principle 
\principle{P_0} in \cite{GorisJoosten:2011:ANewPrinciple} and presented a related new principle that has the same frame condition
\[
\principle{R}\ := \  A\rhd B\to \neg(A\rhd\neg C)\rhd  B\wedge\Box C.
\]
They determined formulas \kgen{P$_0$} and \kgen{R}.
Here are slightly reformulated versions from \cite{Mikec-Vukovic-20}:
$$\begin{array}{rcl}
\kgen{P$_0$} & := &  wRxRuS_w V \ \& \ (\forall v \in V)  R[v] \cap Z \neq \emptyset \ 
\Rightarrow \ (\exists Z' \subseteq Z) uS_x Z';\\
\mbox{}\\
\kgen{R} & := & wRxRuS_w V \Rightarrow (\forall C \in \mathcal{C}(x, u))(\exists U \subseteq V)
(x S_w U \ \& \ R[U] \subseteq C).
\end{array}$$
where $\mathcal{C}(x, u) = \{ C \subseteq R[x] : (\forall Z) (uS_x Z\Rightarrow Z\cap C \neq 
\emptyset) \}$ is the family of ``choice sets''.
They proved 
$\extil{}\principle{W}\principle{P}_0\principle{M}_0\not\vdash \principle{R}$  using GVS. 
Instead of providing a GVS frame condition for \principle{W}, the authors proved a necessary and 
sufficient GVS frame condition for \principle{W} to fail. 

\begin{definition}
\[
\label{defn:not_w}
\begin{array}{ll}
{\sf Not\text{-}W} \ \ :=\ \  & \exists \,  w,\  z_0,\  \{Y_i\}_{i\in \omega}, \ \{ y_i\}_{i\in \omega,\  y_i\in Y_i} ,\  Z,\   
\{  z_{i+1}\}_{i \in \omega, \ z_{i+1}\in Z}\\
\ & [ \forall i \in \omega (z_i S_w Y_i \ni y_i Rz_{i+i}) \ \& \\
\ & \ \forall z\in Z \exists i\in \omega zS_w Y_i \ \& \\
\ & \ \forall z\in Z \forall Y\ (zS_wY \ \& \ Y \subseteq (\cup_{i\in \omega}Y_i) \ \Rightarrow \  
\exists z' \in Z \exists y \in Y yRz')]\\

\end{array}
\]
\end{definition}

\begin{lemma}
For any generalised Veltman frame $\mathfrak F$ we have that
\[
\mathfrak{F} \models {\sf Not\text{-}W} \ \ \ \mbox{if and only if} \ \ \ \mathfrak{F} \not \models \mathsf W.
\]
\end{lemma}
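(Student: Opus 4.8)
The plan is to prove both implications directly, reading ${\sf Not\text{-}W}$ as an explicit description of an infinite ``bad'' regress that witnesses the failure of $\mathsf W$. For the easy direction, suppose $\mathfrak F \models {\sf Not\text{-}W}$ and fix the data $w, z_0, \{Y_i\}, \{y_i\}, Z, \{z_{i+1}\}$. I would refute the instance $p \rhd q \to p \rhd (q \wedge \Box \neg p)$ of $\mathsf W$ under the valuation $x \Vdash p :\Leftrightarrow x \in Z$ and $x \Vdash q :\Leftrightarrow x \in \bigcup_{i} Y_i$. Here $w \Vdash p \rhd q$ follows from the covering conjunct: every $u \in Z$ has some $Y_i$ with $u S_w Y_i$, and $Y_i \subseteq \bigcup_j Y_j$ forces $q$. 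For $w \nVdash p \rhd (q \wedge \Box \neg p)$, take the witness $z_1 \in Z$ and let $V$ be any set with $z_1 S_w V$. If $V \not\subseteq \bigcup_i Y_i$ then some $v \in V$ fails $q$; if $V \subseteq \bigcup_i Y_i$ then the closure conjunct yields $y \in V$ and $z' \in Z$ with $yRz'$, so $y \Vdash q$ while $y \nVdash \Box\neg p$ (as $z' \Vdash p$). Either way $V \nVdash q \wedge \Box \neg p$, so $z_1$ refutes the consequent.

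For the converse, assume $w \Vdash A \rhd B$ and $w \nVdash A \rhd (B \wedge \Box\neg A)$, and set $P_B := \{x \in R[w] : x \Vdash B\}$. Call an $A$-world $z \in R[w]$ \emph{bad} if $Y \nVdash B \wedge \Box\neg A$ for every $Y$ with $z S_w Y$; the failure of the consequent hands me at least one bad world $z^*$. The crux is a lemma stating that every bad $z$ has a bad successor: for any $Y$ with $z S_w Y$ and $Y \subseteq P_B$ there are $y \in Y$ and a bad $A$-world $z'$ with $yRz'$. I would prove this by contraposition via quasi-transitivity: if every $A$-world reachable in one $R$-step out of $Y$ were good, then for each $v \in Y$ I can produce a set $Z_v$ with $v S_w Z_v$ and $Z_v \Vdash B \wedge \Box\neg A$ (taking $Z_v = \{v\}$ by quasi-reflexivity when $v \Vdash \Box\neg A$, and otherwise lifting a good successor's witness through $v S_w \{z'\}$, which clause (d) and quasi-transitivity provide); quasi-transitivity then glues these into a single $Y^*$ with $z S_w Y^* \Vdash B \wedge \Box\neg A$, contradicting that $z$ is bad.

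Monotonicity now removes all bookkeeping. Since $w \Vdash A \rhd B$, every bad $z$ has a $B$-witness $W \subseteq P_B$, which monotonicity upgrades to $z S_w P_B$. Hence I take the \emph{constant} family $Y_i := P_B$, put $Z := \{\, z : z \text{ is bad} \,\}$ and $z_0 := z^*$, and build the chain by dependent choice: given bad $z_i$, the lemma applied to $z_i S_w P_B$ supplies $y_i \in P_B$ and a bad $z_{i+1}$ with $y_i R z_{i+1}$. The chain conjunct then holds by construction, the covering conjunct because $z S_w P_B = Y_0$ for every $z \in Z$, and the closure conjunct because it is precisely another application of the lemma: any $Y \subseteq \bigcup_i Y_i = P_B$ with $z S_w Y$ produces $y \in Y$ and a bad world $z'$ (hence $z' \in Z$) with $yRz'$.

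The main obstacle is this backward direction, concretely the quasi-transitivity gluing in the ``bad successor'' lemma. The seeming conflict between the covering conjunct (which pushes $Z$ to be small and tied to the chain) and the closure conjunct (which forces $Z$ to be closed under bad successors, hence large) is the subtle point; it resolves only once one observes that monotonicity lets the single maximal witness $P_B$ act as the $S_w$-image of every bad world at once, so that $Z$ may be taken to be the entire set of bad worlds and the chain extracted by a routine recursion.
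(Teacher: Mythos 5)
Your proof is correct. Note that the paper itself states this lemma without any proof (it is quoted from the Goris--Joosten paper introducing $\principle{R}$), so there is no in-text argument to compare yours against; judged on its own, your two directions both go through. The forward direction is right to use $z_1$ rather than $z_0$ as the refuting $A$-world (only $z_1, z_2, \dots$ are guaranteed to lie in $Z$), and the case split on whether $V \subseteq \bigcup_i Y_i$ cleanly separates the roles of the covering and closure conjuncts. In the backward direction, the ``bad successor'' lemma is the real content, and your contrapositive argument uses exactly the right frame clauses: quasi-reflexivity for $v \Vdash \Box\neg A$, clause (d) plus quasi-transitivity to pull a good successor's witness back to $v$, and quasi-transitivity again to glue the $Z_v$ into a single witness for $z$ --- which is legitimate here since the paper fixes Verbrugge's Condition (2) as the official quasi-transitivity. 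The final observation, that monotonicity lets you take every $Y_i$ to be the single set $P_B$ and $Z$ to be the set of \emph{all} bad worlds (so that the closure conjunct becomes a restatement of the bad-successor lemma rather than an extra constraint on the chain), is precisely what dissolves the apparent tension between the second and third conjuncts of ${\sf Not\text{-}W}$; this is the one genuinely non-routine step and you have it.
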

A positive frame condition for \principle{W} is presented in Section \ref{section:LogicsWandStar}.

\section{Modal completeness: preliminaries}\label{section:CompletenessPreliminaries}

The aim of this and the next section is to explore modal completeness with respect to GVS. We will give the state-of-the-art of completeness results involving GVS.
Let us first say a few words on the history of modal completeness proofs concerning interpretability logics.

This and the following section are based heavily on the recent paper \cite{Mikec-Vukovic-20}.
For this reason we will not cite results; for any definition or result without a reference it is safe to assume it is being quoted from \cite{Mikec-Vukovic-20}.

\subsection{Overview of approaches}

De~Jongh and Veltman proved the completeness of \extil{}, \extil{M} and \extil{P}  w.r.t.\ 
the corresponding characteristic classes of ordinary (and finite) Veltman frames in 
\cite{JonghVeltman:1990:ProvabilityLogicsForRelativeInterpretability}.
As is usual for extensions of the provability logic \textbf{GL}, all completeness proofs 
suffer from compactness-related issues. 
One way to go about this is to define a (large enough) adequate set of formulas and let worlds 
be maximal consistent subsets of such sets (used e.g. in 
\cite{JonghVeltman:1990:ProvabilityLogicsForRelativeInterpretability}). 
With interpretability logics and 
ordinary Veltman semantics, worlds have not been identified with (only) sets of formulas. It seems 
that with ordinary Veltman semantics it is sometimes necessary to duplicate worlds (that is, have 
more than one world correspond to a single maximal consistent set) in order to build models for 
certain consistent sets (see e.g.\ 
\cite{JonghVeltman:1990:ProvabilityLogicsForRelativeInterpretability}). In  
\cite{JonghVeltman:1999:ILW}, de Jongh and Veltman proved completeness of the logic \extil{W}  
w.r.t.\ its characteristic class of ordinary (and finite) Veltman frames.

Goris and Joosten, inspired by Dick de Jongh, introduced\footnote{See our comments in Footnote \ref{footnoteOnSteps} for some more detailed historical context. Also \cite{de2004completeness} provides some comments on construction methods for other modal logics.} a more robust approach to proving completeness of interpretability 
logics, the \textit{construction method} or \textit{step-by-step method} (\cite{Goris-Joosten-08, GorisJoosten:2011:ANewPrinciple}). 
In this type of proofs, one builds models step by step, and the final model is retrieved 
as a union. 
While closer to the intuition and more informative than the standard proofs, these proofs are 
hard to produce and verify due to their size. (They might have been shorter if tools from 
\cite{BilkvaGorisJoosten:2004:SmartLabels, goris2020assuring} have 
been used from the start.) 
For the purpose for which this type of proofs was invented (completeness of \extil{M_0}{} and 
\extil{W^*} w.r.t.\ the ordinary semantics), this type of proofs is still the only known approach that works. 

In \cite{Mikec-Vukovic-20} a very direct type of proofs of completeness is presented; similar to 
\cite{JonghVeltman:1990:ProvabilityLogicsForRelativeInterpretability} in the general approach,
but this time with respect to GVS.
The so-called \textit{assuring labels} from 
\cite{BilkvaGorisJoosten:2004:SmartLabels, goris2020assuring} were used as a key step.
These completeness proofs are the ones that we aim to explore here.
An example that illustrates benefits of using the generalised semantics will be given in the section dedicated 
to \extil{M_0}. The most interesting of these results are completeness of
\extil{R} and \extil{P_0}. The principle \textsf{R} is important because it forms the basis of the, at 
the moment, best explicit candidate for \extil{(All)} as discussed in more detail in Section \ref{section:Hierarchies}. Results concerning the principle \extil{P_0} 
are interesting in a different way; they answer an old question: is there an unravelling technique that 
transforms generalised \extil{X}-models to ordinary \extil{X}-models, that preserves satisfaction of relevant 
characteristic properties? The answer is \textit{no}: \extil{P_0} is complete w.r.t.\ GVS, but it is known to be incomplete w.r.t.\ the ordinary semantics (\cite{GorisJoosten:2011:ANewPrinciple}).

%%%%%%%%%%%%%%%%%%%%%%%%%%%%%%%%%%%%%%%%%%%%%%%%%%%%%%%%%%%%%%%%%%%%%%%%%%%%%%%%%%%%%%%%%%%%%%%%%%%
\subsection{Completeness w.r.t.\ generalised semantics}

In what follows, ``formula'' will always mean ``modal formula''. If the ambient logic in some context is 
\extil{X}, a maximal consistent set w.r.t.\ \extil{X} will be called an \extil{X}-MCS. Let us now introduce 
\textit{assuring labels} from \cite{BilkvaGorisJoosten:2004:SmartLabels} and \cite{goris2020assuring}.

\begin{definition}[\cite{BilkvaGorisJoosten:2004:SmartLabels}, a slightly modified Definition 3.1]
    \label{def-smart-label}
    Let $w$ and $u$ be some \extil{X}-MCS's, and let $S$ be an arbitrary set of 
    formulas.
    We write $w \prec_S u$ if for any finite $S' \subseteq S$ and any formula $A$ 
    we have that $A \rhd \bigvee_{G\in S'}\neg G \in w$ implies 
    $\neg A, \square \neg A \in u.$
\end{definition}
Note that the small differences between our Definition \ref{def-smart-label} and Definition 3.1 
\cite{BilkvaGorisJoosten:2004:SmartLabels} do not affect the results of \cite{BilkvaGorisJoosten:2004:SmartLabels} that we use.\footnote{
The difference is a different strategy of ensuring converse well-foundedness for the relation $R$. 
Instead of asking for the existence of some $\Diamond F \in w \setminus u$ whenever $w R u$, as is usual in the 
context of provability (and interpretability) logics, we will go for a stronger condition (see Definition 
\ref{ilx-struktura}).
Since we will later put $R := \prec$, this choice of ours is reflected already at this point.
}

\begin{definition}[\cite{BilkvaGorisJoosten:2004:SmartLabels}, page 4]
    \label{def-smart-label-sets}
    Let $w$ be an \extil{X}-MCS, and $S$ an arbitrary set of formulas. Put:
    \begin{align*}
    	w_S^\square & := \{ \square \neg A : \exists S' \subseteq S, S' \text{ finite}, 
    	A \rhd\bigvee_{G\in S'}  \neg G\in w \}; \\
       	w_S^\boxdot & := \{ \neg A, \square \neg A : \exists S' \subseteq S, S' \text{ finite}, 
       	A \rhd\bigvee_{G\in S'}  \neg G\in w \}.
   \end{align*}
\end{definition}
Thus, $w \prec_S u$ if and only if $w_S^\boxdot \subseteq u$.
If $S = \emptyset$ then $w_\emptyset^\square = \{ \square \neg A : A \rhd \bot \in w \}$. 
Since $w$ is maximal consistent, use of $w_\emptyset^\Box$ usually amount to the same as the use of the set
$\{ \square A : \square A \in w \}.$ 

We will usually write $w \prec u$ instead of $w \prec_\emptyset u$.
\begin{lemma}[\cite{BilkvaGorisJoosten:2004:SmartLabels}, Lemma 3.2]
\label{lema3.2}
Let $w$, $u$ and $v$ be some \extil{X}-MCS's, and let $S$ and $T$ be some sets of formulas. Then we have:
\begin{itemize}
    \item[a)] if $S\subseteq T$ and $w\prec_T u$, then $w\prec_S u$;
    \item[b)] if $w\prec_S u\prec v$, then $w\prec_S v$;
    \item[c)] if $w\prec_S u$, then $ S\subseteq u$.
\end{itemize}
\end{lemma}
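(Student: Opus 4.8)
The plan is to treat the three items separately, noting that (a) and (c) follow almost immediately by unwinding Definition \ref{def-smart-label}, while (b) carries the real content and requires a small detour through axiom ${\sf J1}$. Throughout I use that each of $w,u,v$ is an \extil{X}-MCS, hence contains all \il-theorems and is closed under classical consequence.

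For (a), suppose $S \subseteq T$ and $w \prec_T u$. To verify $w \prec_S u$, I would take an arbitrary finite $S' \subseteq S$ and a formula $A$ with $A \rhd \bigvee_{G \in S'} \neg G \in w$. Since $S \subseteq T$, the set $S'$ is also a finite subset of $T$, so the hypothesis $w \prec_T u$ applies verbatim and yields $\neg A, \square \neg A \in u$; enlarging the parameter set only strengthens the relation, because $\prec_S$ quantifies over the finite subsets of $S$. For (c), suppose $w \prec_S u$ and fix $G_0 \in S$. Taking the singleton $S' = \{ G_0 \} \subseteq S$ and $A := \neg G_0$, the disjunction $\bigvee_{G \in S'} \neg G$ is just $\neg G_0$, and $\neg G_0 \rhd \neg G_0$ is an \il-theorem (by necessitation on $\neg G_0 \to \neg G_0$ followed by ${\sf J1}$), hence lies in $w$. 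The relation $w \prec_S u$ then gives $\neg \neg G_0 \in u$, and maximal consistency of $u$ delivers $G_0 \in u$; since $G_0 \in S$ was arbitrary, $S \subseteq u$.

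The main obstacle is (b): assuming $w \prec_S u$ and $u \prec v$ (that is, $u \prec_\emptyset v$), I must show $w \prec_S v$. Fix a finite $S' \subseteq S$ and a formula $A$ with $A \rhd \bigvee_{G \in S'} \neg G \in w$. Applying $w \prec_S u$ first yields $\neg A, \square \neg A \in u$. The crucial step is to convert the boxed formula $\square \neg A \in u$ into an interpretability statement that the weaker relation $\prec_\emptyset$ can consume: since $\square \neg A$ is \il-provably equivalent to $\square(A \to \bot)$ (via ${\sf L1}$ and necessitation) and ${\sf J1}$ gives $\square(A \to \bot) \to A \rhd \bot$, maximal consistency of $u$ forces $A \rhd \bot \in u$. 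Now observe that the empty disjunction $\bigvee_{G \in \emptyset} \neg G$ is $\bot$, so $A \rhd \bot$ is exactly the instance needed to invoke $u \prec_\emptyset v$ with the formula $A$ and the empty finite subset of $\emptyset$; this yields $\neg A, \square \neg A \in v$, which is precisely the conclusion required for $w \prec_S v$. I expect the only delicate points to be bookkeeping: tracking that $\bot$ arises as the empty disjunction, and checking the equivalence $\square \neg A \leftrightarrow \square(A \to \bot)$ inside the MCS, both of which are routine.
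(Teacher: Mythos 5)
Your proof is correct, and all three parts are handled the way one would expect: (a) and (c) by unwinding the definition (with the $\neg G_0 \rhd \neg G_0$ instance of ${\sf J1}$ for (c)), and (b) by converting $\Box\neg A \in u$ into $A \rhd \bot \in u$ so that $u \prec_\emptyset v$ applies, which matches the paper's own convention that the empty disjunction is $\bot$ (cf.\ the remark that $w_\emptyset^\square = \{\square\neg A : A \rhd \bot \in w\}$). The paper quotes this lemma from the cited source without reproducing a proof, and your argument is the standard one.
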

We will tacitly use the preceding lemma in most of our proofs. Although not needed in this paper, we mention that in \cite{goris2020assuring} it is shown that without loss of generality we may actually assume that labels are full theories.

The following two lemmas can be used to construct (or in our case, find) a MCS with the required properties.
\begin{lemma}[\cite{BilkvaGorisJoosten:2004:SmartLabels}, Lemma 3.4]
\label{problemi}
Let $w$ be an \extil{X}-MCS, and let $\neg (B\rhd C)\in w.$ Then there is an \extil{X}-MCS $u$ 
such that $w \prec_{\{\neg C\}} u$ and $B, \square \neg B\in u.$
\end{lemma}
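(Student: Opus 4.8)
The plan is to directly construct the required MCS $u$ by means of a Lindenbaum-style maximality argument, starting from a carefully chosen seed set of formulas whose consistency we must verify. Concretely, I would consider the set
\[
\Gamma \ := \ w_{\{\neg C\}}^\boxdot \cup \{ B, \square \neg B \},
\]
and aim to show that $\Gamma$ is \extil{X}-consistent. Once consistency is established, any MCS $u \supseteq \Gamma$ will by construction contain $B$ and $\square \neg B$, and will satisfy $w_{\{\neg C\}}^\boxdot \subseteq u$, which is exactly the statement $w \prec_{\{\neg C\}} u$ recorded just after Definition \ref{def-smart-label-sets}. Thus everything reduces to the consistency of $\Gamma$.

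The main obstacle, and the heart of the argument, is precisely this consistency check. Suppose for contradiction that $\Gamma$ is inconsistent. Since $\neg(B\rhd C)\in w$, I would like to extract a contradiction using the defining property of $w_{\{\neg C\}}^\boxdot$ together with the interpretability axioms. First I would unwind inconsistency: from $\Gamma \vdash \bot$ and compactness there is a finite conjunction $\bigwedge_i \square\neg A_i \wedge \bigwedge_j \neg A_j'$ of members of $w_{\{\neg C\}}^\boxdot$ such that this conjunction together with $B$ and $\square\neg B$ proves $\bot$; equivalently, these boxed and negated formulas together with $\square\neg B$ prove $\neg B$. The defining property tells us that each such $\neg A_j'$ (and each $\square\neg A_j'$) arises from some premise $A_j' \rhd \bigvee_{G\in S_j'}\neg G \in w$ with $S_j' \subseteq \{\neg C\}$ finite, so the relevant disjunction $\bigvee_{G\in S_j'}\neg G$ is either $\bot$ or $\neg\neg C$, i.e.\ (up to provable equivalence) either $\bot$ or $C$. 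The plan is then to combine these interpretability statements in $w$ using the ${\sf J}$-axioms---in particular ${\sf J3}$ to collect the various left-hand sides into a single disjunction, and ${\sf J1}$ together with the boxed hypotheses to push $B$ through---so as to derive $B \rhd C \in w$, contradicting $\neg(B\rhd C)\in w$.

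The delicate bookkeeping is in organising the boxed formulas $\square\neg A_i$ versus the plain negations $\neg A_j'$ and seeing how the propositional deduction
$\{\square\neg A_i\}_i \cup \{\neg A_j'\}_j \cup \{\square\neg B\} \vdash \neg B$
gets internalised as a provability/interpretability statement inside $w$. Here one uses that from $\vdash \big(\bigwedge_i \square\neg A_i\big)\wedge \square\neg B \to (\neg A_j' \to \neg B)$-type tautologies one can, via necessitation and ${\sf L1}$, move to $\Box$-statements, and then apply ${\sf J1}$ in the form $\Box(X\to Y)\to X\rhd Y$ to convert them into $\rhd$-statements that chain with the premises already in $w$ via ${\sf J2}$. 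I expect the bulk of the work to be this modal-logical manipulation showing that the propositional inconsistency of $\Gamma$ forces $B\rhd C\in w$; the choice $S'=\{\neg C\}$ is exactly what makes the right-hand disjunctions collapse to (provable equivalents of) $C$, so that the interpretant on the right is $C$ rather than something weaker. Since this lemma is attributed to \cite{BilkvaGorisJoosten:2004:SmartLabels}, I would finally remark that the argument is a routine but careful instance of the standard existence-lemma technique for $\prec$-style relations, and cite that source for the detailed computation.
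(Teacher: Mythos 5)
Your overall architecture is exactly the standard one (the paper itself gives no proof of this lemma, deferring entirely to \cite{BilkvaGorisJoosten:2004:SmartLabels}): take the seed set $\Gamma = w_{\{\neg C\}}^\boxdot \cup \{B,\Box\neg B\}$, prove it \extil{X}-consistent, and extend by Lindenbaum; the observation that $w\prec_{\{\neg C\}}u$ amounts to $w_{\{\neg C\}}^\boxdot\subseteq u$, and that the right-hand disjunctions $\bigvee_{G\in S'}\neg G$ with $S'\subseteq\{\neg C\}$ collapse to $\bot$ or $\neg\neg C$, is also correct.

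The gap is in the internalisation step, and it is a real one: the toolkit you list (propositional tautologies, necessitation, ${\sf L1}$, ${\sf J1}$, ${\sf J2}$, ${\sf J3}$) does not suffice to turn the propositional inconsistency of $\Gamma$ into $B\rhd C\in w$. What the inconsistency actually gives, after contraposition, is
\[
\vdash\ B\wedge\Box\neg B\ \to\ E\vee\Diamond E,\qquad E:=A_1\vee\dots\vee A_k,
\]
where $\neg A_i,\Box\neg A_i$ are the finitely many members of $w_{\{\neg C\}}^\boxdot$ used in the refutation. Necessitation and ${\sf J1}$ then yield $B\wedge\Box\neg B\rhd E\vee\Diamond E$, but this statement has the wrong formula on \emph{both} sides: on the left you need plain $B$, and on the right you need $E$ alone, so that ${\sf J3}$ applied to the premises $A_i\rhd\bigvee_{G\in S_i'}\neg G\in w$ can take over. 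Closing these two mismatches requires precisely the two axioms absent from your list: ${\sf L3}$ (L\"ob), via the derived \il-principle $B\rhd B\wedge\Box\neg B$, to fix the left-hand side, and ${\sf J5}$, via $\Diamond E\rhd E$ combined with $E\rhd E$ through ${\sf J3}$, to discharge the $\Diamond E$ disjunct on the right. With those two links inserted, the chain $B\rhd B\wedge\Box\neg B\rhd E\vee\Diamond E\rhd E\rhd\neg\neg C\rhd C$ goes through by repeated ${\sf J2}$ and contradicts $\neg(B\rhd C)\in w$; without them the argument as you describe it stalls at the very first link, since nothing in your listed axioms lets you pass from $B$ to $B\wedge\Box\neg B$ on the left of $\rhd$.
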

\begin{lemma}[\cite{BilkvaGorisJoosten:2004:SmartLabels}, Lemma 3.5]
\label{nedostaci}
Let $w$ and $u$ be some \extil{X}-MCS's such that $B\rhd C\in w,$ $w\prec_S u$ and $B\in u.$
Then there is an \extil{X}-MCS $v$ such that $w\prec_S v$ and $C,\square\neg C\in v.$
\end{lemma}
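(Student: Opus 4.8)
The statement to be proved asserts the existence of an \extil{X}-MCS $v$ with $w \prec_S v$ and $C, \Box\neg C \in v$. The plan is to obtain $v$ from Lindenbaum's lemma, so the entire argument reduces to showing that the set
\[
\Gamma \ := \ w^\boxdot_S \cup \{ C, \Box \neg C\}
\]
is \extil{X}-consistent. Once this is established, any MCS $v \supseteq \Gamma$ satisfies $w^\boxdot_S \subseteq v$, which by the reformulation $w \prec_S v \Leftrightarrow w^\boxdot_S \subseteq v$ noted after Definition \ref{def-smart-label-sets} means exactly $w \prec_S v$, and it also contains $C, \Box\neg C$, as required.

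First I would argue by contradiction, assuming $\Gamma$ is inconsistent. Since derivations are finite, some finite subset is already inconsistent, and we may take it to be $\{\neg A_i, \Box\neg A_i : i\in I\} \cup \{C,\Box\neg C\}$ with $I$ finite, where for each $i$ membership in $w^\boxdot_S$ supplies a finite $S_i'\subseteq S$ with $A_i \rhd \bigvee_{G\in S_i'}\neg G \in w$. Writing $A := \bigvee_{i\in I} A_i$ and using that $\Box$ distributes over finite conjunctions, the conjunction $\bigwedge_{i}(\neg A_i \wedge \Box\neg A_i)$ is provably equivalent to $\neg A \wedge \Box \neg A$, so the inconsistency rewrites as the \extil{X}-provable implication $\vdash C\wedge\Box\neg C \to A \vee \Diamond A$.

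I would then propagate this implication through the $\rhd$-axioms so as to contradict $B\in u$. Put $S^* := \bigcup_{i\in I} S_i'$ and $E := \bigvee_{G\in S^*}\neg G$; right-weakening (via ${\sf J1}$ and ${\sf J2}$) turns each $A_i \rhd \bigvee_{G\in S_i'}\neg G \in w$ into $A_i \rhd E \in w$, and ${\sf J3}$ collects these into $A \rhd E \in w$. Applying ${\sf J5}$ (that is $\Diamond A \rhd A$) together with ${\sf J2}$ gives $\Diamond A \rhd E \in w$, and a further use of ${\sf J3}$ yields $(A\vee\Diamond A)\rhd E\in w$. From the provable implication above, ${\sf J1}$ gives $(C\wedge\Box\neg C)\rhd(A\vee\Diamond A)$, so ${\sf J2}$ delivers $(C\wedge\Box\neg C)\rhd E\in w$. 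Invoking the theorem $C \rhd C\wedge\Box\neg C$ of \il (valid on every Veltman frame by converse well-foundedness of $R$, hence derivable by the completeness of \il) and composing with ${\sf J2}$ yields $C \rhd E \in w$; one more composition with the hypothesis $B\rhd C\in w$ gives $B \rhd E \in w$, that is $B \rhd \bigvee_{G\in S^*}\neg G \in w$ with $S^*\subseteq S$ finite. By the defining clause of $w\prec_S u$ this forces $\neg B \in u$, contradicting $B\in u$ and the consistency of $u$.

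The routine part is the bookkeeping with ${\sf J1}$--${\sf J5}$ and the equivalence $\bigwedge_i(\neg A_i\wedge\Box\neg A_i)\equiv \neg A\wedge\Box\neg A$. The step I expect to be the real crux is the use of the theorem $C\rhd C\wedge\Box\neg C$: this is precisely where converse well-foundedness (Löb's principle) enters, converting the ``somewhere-$C$'' content of $C\wedge\Box\neg C$ extracted from the assumed inconsistency into genuine $\rhd$-information about $C$ itself, and it is the one ingredient not immediately read off from the $\rhd$-axioms. I would also verify the degenerate case $I=\emptyset$, where $A=\bot$ and $E=\bot$, checking that the argument still produces $B\rhd\bot\in w$ and hence $\neg B\in u$, so that the contradiction persists.
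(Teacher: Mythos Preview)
Your argument is correct. The paper does not actually prove this lemma; it merely cites it from \cite{BilkvaGorisJoosten:2004:SmartLabels}, so there is no in-paper proof to compare your approach against. Your proof---showing consistency of $w^\boxdot_S \cup \{C,\Box\neg C\}$ by deriving $B \rhd \bigvee_{G\in S^*}\neg G \in w$ from a purported inconsistency and then invoking $w\prec_S u$---is the standard one, and the identification of $C \rhd C \wedge \Box\neg C$ as the key L\"ob-flavoured ingredient is exactly right.
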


In the remainder of this section, we will assume that $\mathcal{D}$ is always a finite set of formulas, 
closed under taking subformulas and single negations, and $\top\in \mathcal{D}$.
The following definition is central to most of the results of this section.

\begin{definition}
\label{ilx-struktura}   
Let \textsf{X} be a  subset of $\{$\textsf{M}, \textsf{M\textsubscript{0}}, \textsf{P}, 
\textsf{P\textsubscript{0}}, \textsf{R}$\}$.
We say that  
$\mathfrak{M} =\la W, R, \{S_w : w \in W\}, V\ra$ is \emph{the \extil{X}-structure for 
a set of formulas $\mathcal{D}$} if:
\[
\begin{array}{rll}
           W &:=& \{ w : w \text{ is an \extil{X}-MCS and for some } G \in \mathcal{D}, G 
           \wedge \square \neg G \in w \};\\
            wRu &:\Leftrightarrow & w \prec u;\\  
            uS_w V &:\Leftrightarrow & wRu \mbox{\ and, } V \subseteq R[w]\mbox{ and, } (\forall S)(w \prec_S u \Rightarrow 
            (\exists v \in V) w \prec_S v );\\
            w\in V(p) &: \Leftrightarrow & p\in w.
    \end{array}
\]
\end{definition}

We note that the \extil{X}-structure for $\mathcal{D}$ is a unique object. In fact, we could work with just one ``\extil{X}-structure'' (that would not depend even on $\mathcal{D}$): the disjoint union of \extil{X}-structures for all choices of $\mathcal{D}$. We also observe that the definition entails that when $uS_wV$, then $V\neq \emptyset$ since $wRu \Rightarrow w\prec_\emptyset u$ so $\exists (v \in V) w\prec_\emptyset v$.

Notice that worlds in the definition above are somewhat more restricted than what is usually found in similar proofs: every world is required to be $R$-maximal
with respect to some formula.
That is, for every world $w \in W$ we want to have a formula $G_w$ such that $w \Vdash G_w$ and for any $R$-successor $u$ of $w$, $u \nVdash G_w$. This is equivalent to the requirement that for some formula $G_w$, $w \Vdash G_w \wedge \Box \neg G_w$.
Of course, before we prove our truth lemma we can only require that $G_w \wedge \Box \neg G_w \in w$.
Because of this we need the following lemma whose proof boils down to an instance of Löb's axiom.

\begin{lemma}
\label{lema-nepraznost}
If $\extil{X} \nvdash \neg A$ then there is an \extil{X}-MCS $w$ such that $A\wedge \square\neg A\in w.$
\end{lemma}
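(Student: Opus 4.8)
The plan is to reduce the lemma to a single consistency claim and then invoke Lindenbaum's lemma. Concretely, it suffices to show that if $\extil{X}\nvdash\neg A$, then the formula $A\wedge\square\neg A$ is \extil{X}-consistent; for then any maximal consistent extension $w$ of $\{A\wedge\square\neg A\}$ is an \extil{X}-MCS with $A\wedge\square\neg A\in w$, which is exactly what is required. So I would argue by contraposition: assuming that $A\wedge\square\neg A$ is \emph{inconsistent}, I will derive $\extil{X}\vdash\neg A$, contradicting the hypothesis.

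The heart of the argument is an application of Löb's axiom {\sf L3}. Suppose $\extil{X}\vdash\neg(A\wedge\square\neg A)$. Propositionally this is equivalent to $\extil{X}\vdash\square\neg A\to\neg A$. Applying Necessitation yields $\extil{X}\vdash\square(\square\neg A\to\neg A)$, and the instance of {\sf L3} with $\neg A$ in place of $A$, namely $\square(\square\neg A\to\neg A)\to\square\neg A$, gives by Modus Ponens that $\extil{X}\vdash\square\neg A$. Combining this with the implication $\square\neg A\to\neg A$ obtained above, one further application of Modus Ponens delivers $\extil{X}\vdash\neg A$, which is the desired contradiction. Hence $A\wedge\square\neg A$ is consistent, and the Lindenbaum construction finishes the proof.

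Since every interpretability logic extends \gl{} and hence contains Löb's axiom {\sf L3}, the step above is available in every logic \extil{X} under consideration, and no special properties of the additional axioms (\textsf{M}, \textsf{P}, \textsf{R}, and the like) are needed. I do not expect a genuine obstacle here: the only point requiring care is getting the direction of the Löb manoeuvre right, namely applying {\sf L3} to $\neg A$ rather than to $A$, and noting that the passage from consistency of $A\wedge\square\neg A$ to its membership in some MCS is just the standard Lindenbaum extension.
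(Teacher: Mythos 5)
Your proof is correct and takes exactly the route the paper intends: the paper omits the argument but remarks that it ``boils down to an instance of L\"ob's axiom'', which is precisely your contrapositive manoeuvre of passing from $\vdash \square\neg A\to\neg A$ via Necessitation and ${\sf L3}$ to $\vdash\neg A$, followed by a Lindenbaum extension. No gaps.
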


We are now ready to prove the main lemma of this section,
    which tells us that the structure defined in
    Definition \ref{ilx-struktura} really is a generalised
    Veltman model.
Notice that we do not claim that it is also an
    \ilgen{X}-model;
    we prove that later.

\begin{lemma}
Let \textsf{X} be a subset of $\{$\textsf{M}, \textsf{M\textsubscript{0}}, \textsf{P}, 
\textsf{P\textsubscript{0}}, \textsf{R}$\}$.
The \extil{X}-structure $\mathfrak{M}$ for a set of formulas $\mathcal{D}$ is a generalised Veltman model.
Furthermore, the following truth lemma holds:
\[ 
    \mathfrak{M},w\Vdash G \ \mbox{ if and only if }\ G\in w,
\]
for all $G\in\mathcal{D}$ and $w\in W.$ 
\label{lemma-main}
\end{lemma}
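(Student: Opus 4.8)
The plan is to separate the statement into two parts: first that $\mathfrak{M}$ is a generalised Veltman model (i.e.\ that $R$ is transitive and converse well-founded and that each $S_w$ satisfies conditions a)--e) of the definition), and then the truth lemma by induction on the complexity of $G\in\mathcal{D}$. Nearly all of the frame conditions reduce to Lemma~\ref{lema3.2}. Transitivity of $R$ is the case $S=\emptyset$ of part~(b); condition~(d), namely $wRuRv\Rightarrow uS_w\{v\}$, is again part~(b) applied to $w\prec_S u\prec v$; quasi-reflexivity and monotonicity are immediate from the definition of $S_w$; and quasi-transitivity follows by chaining the defining ``$\forall S$'' clauses: given $uS_wV$ and $vS_wZ_v$ for all $v\in V$, any $S$ with $w\prec_S u$ yields some $v\in V$ with $w\prec_S v$ and then some $z\in Z_v\subseteq\bigcup_{v}Z_v$ with $w\prec_S z$. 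Nonemptiness of the $S_w$-images is guaranteed by instantiating the clause at $S=\emptyset$, as already noted after Definition~\ref{ilx-struktura}.

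Converse well-foundedness is where the unusual restriction on $W$ earns its keep. First I would record the box transfer property: since ${\sf J1}$ and ${\sf J4}$ give $\Box\neg A\leftrightarrow A\rhd\bot$ over \il, one checks $w_\emptyset^\boxdot=\{C,\Box C:\Box C\in w\}$, so $w\prec u$ forces $\{C,\Box C:\Box C\in w\}\subseteq u$. Now suppose $w_0Rw_1R\cdots$ were an infinite chain. Each $w_i$ carries a witness $G_i\in\mathcal{D}$ with $G_i\wedge\Box\neg G_i\in w_i$; as $\mathcal{D}$ is finite, $G_i=G_j$ for some $i<j$, and since $w_iRw_j$ by transitivity, box transfer pushes $\neg G_i\in w_j$, contradicting $G_j=G_i\in w_j$.

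For the truth lemma I would induct on $G$, the Boolean and propositional cases being routine from maximal consistency and the closure of $\mathcal{D}$ under subformulas and single negations. For $G=\Box A$ the $(\Leftarrow)$ direction is box transfer together with the induction hypothesis, while $(\Rightarrow)$ is the standard Löb argument for \gl{}: from $\neg\Box A\in w$ one extends $\{C,\Box C:\Box C\in w\}\cup\{\neg A,\Box\neg A\}$ to an MCS $u$, which lies in $W$ precisely because $\neg A\in\mathcal{D}$ makes $\neg A\wedge\Box\neg A\in u$ a maximality witness; then $wRu$ and $\neg A\in u$ give $w\nVdash\Box A$. This illustrates the general mechanism: the ``$\Box\neg$''-components produced by all the existence lemmas are exactly what places the new worlds inside the restricted domain $W$.

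The crux is $G=A\rhd B$. For $(\Leftarrow)$, assume $A\rhd B\in w$ and take $u$ with $wRu$ and $u\Vdash A$, so $A\in u$ by the induction hypothesis. Put $V:=\{v\in R[w]:B\in v\}$; then $V\subseteq R[w]$ and $V\Vdash B$ by the induction hypothesis, and to get $uS_wV$ I verify its defining clause: for each $S$ with $w\prec_S u$, Lemma~\ref{nedostaci} applied to $A\rhd B\in w$, $w\prec_S u$, $A\in u$ produces $v$ with $w\prec_S v$ and $B,\Box\neg B\in v$, whence $v\in V$ (its membership in $W$ again coming from the $\Box\neg B$ component). For $(\Rightarrow)$, assume $\neg(A\rhd B)\in w$; Lemma~\ref{problemi} yields $u$ with $w\prec_{\{\neg B\}}u$ and $A,\Box\neg A\in u$, so $u\in W$, $wRu$ and $u\Vdash A$. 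Given any $V$ with $uS_wV$, I instantiate the ``$\forall S$'' clause of $uS_wV$ at $S=\{\neg B\}$, legitimate because $w\prec_{\{\neg B\}}u$; this delivers $v\in V$ with $w\prec_{\{\neg B\}}v$, and Lemma~\ref{lema3.2}(c) then gives $\neg B\in v$, i.e.\ $v\nVdash B$. Hence no $S_w$-image of $u$ forces $B$, so $w\nVdash A\rhd B$. I expect this $\rhd$-case to be the main obstacle, since it is the only place where the somewhat baroque definition of $S_w$ (its quantification over all label sets $S$) must be matched precisely against the assuring-label lemmas, with the single clever instantiation $S=\{\neg B\}$ carrying the $(\Rightarrow)$ direction.
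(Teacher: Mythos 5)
Your proposal is correct and takes essentially the same route as the paper: the paper's own proof writes out only the case $w \Vdash B\rhd C \Rightarrow B\rhd C \in w$, and your treatment of it coincides with it exactly (Lemma~\ref{problemi} to obtain $u$ with $w\prec_{\{\neg C\}}u$ and $B,\Box\neg B\in u$, then instantiating the $\forall S$ clause of $uS_wV$ at $S=\{\neg C\}$ and concluding via Lemma~\ref{lema3.2}(c)), while the remaining cases, which the paper dismisses as straightforward, you fill in correctly along the intended lines. One small slip worth fixing: in the $\Box$-case the consistent set to extend is $w_\emptyset^\boxdot\cup\{\neg A,\Box A\}$ (equivalently $\{\neg A,\Box\neg\neg A\}$), not $\{\neg A,\Box\neg A\}$ --- the Löb consistency argument needs $\Box A$ in $u$, and the maximality witness is then $G=\neg A\in\mathcal{D}$ with $G\wedge\Box\neg G=\neg A\wedge\Box\neg\neg A\in u$; as written, $\{\neg A,\Box\neg A\}$ together with $w_\emptyset^\boxdot$ can even be inconsistent (e.g.\ when $\Box\Diamond A\in w$) and $\neg A\wedge\Box\neg A$ is not of the form $G\wedge\Box\neg G$.
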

\begin{proof}
Most of the proof is straightforward.
Let us just comment the proof of the truth lemma,
    more specifically, the following claim in the induction step:
    $w \Vdash B \rhd C \Rightarrow B \rhd C \in w$.
This part is probably the most interesting one, since it explains
    the motivation behind the definition of $S_w$ in
    Definition \ref{ilx-struktura}.
    
Assume $B \rhd C \notin w$. Lemma \ref{problemi} implies there is $u$ with 
$w \prec_{\{\neg C\}} u$ and  $B, \square \neg B\in u$ (thus $u \in W$). 
It is immediate that $wRu$ and the induction hypothesis implies that $u \Vdash B$.
Assume $u S_w V.$ We are to show that $V \nVdash C$.
Since $w\prec_{\{ \neg C\}} u$ and $uS_w V$,  there is $v \in V$ 
such that $w \prec_{\{\neg C\}} v$. Lemma \ref{lema3.2} implies 
$\neg C\in v.$ 
The induction hypothesis implies $v \nVdash C$; thus $V \nVdash C$.
\end{proof}
This lemma is just one step away from a completeness proof:
\begin{theorem}
\label{glavni} 
Let $X \subseteq \{$\textsf{M}, \textsf{M\textsubscript{0}}, \textsf{P}, 
\textsf{P\textsubscript{0}}, \textsf{R}$\}$.
Assume that for every set $\mathcal{D}$ the \extil{X}-structure for $\mathcal{D}$ possesses 
the property \kgen{X}.
Then \extil{X} is complete w.r.t.\ \ilgen{X}-models.
\end{theorem}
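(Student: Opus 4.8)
The plan is to show that the hypothesis of the theorem closes exactly the one gap that separates Lemma~\ref{lemma-main} from a full completeness result. Recall that Lemma~\ref{lemma-main} already establishes that for \emph{any} finite subformula-closed $\mathcal D$ (with $\top \in \mathcal D$ and closed under single negations), the \extil{X}-structure $\mathfrak M$ for $\mathcal D$ is a generalised Veltman model and satisfies the truth lemma $\mathfrak M, w \Vdash G \Leftrightarrow G \in w$ for all $G \in \mathcal D$. What Lemma~\ref{lemma-main} deliberately does \emph{not} claim is that $\mathfrak M$ is an \ilgen{X}-model, i.e.\ that it satisfies the frame condition \kgen X. The added assumption of Theorem~\ref{glavni} supplies precisely this, so the work is to assemble these pieces into the standard ``consistent formula $\Rightarrow$ satisfiable in an \ilgen{X}-model'' argument.

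First I would suppose that \extil{X} $\nvdash A$ and aim to produce an \ilgen{X}-model refuting $A$. The natural choice is to take $\mathcal D$ to be the closure under subformulas and single negations of $\{A, \top\}$; this is finite and meets all the standing requirements on $\mathcal D$. Next I would apply Lemma~\ref{lema-nepraznost}: since \extil{X} $\nvdash \neg(\neg A)$, there is an \extil{X}-MCS $w_0$ with $\neg A \wedge \Box \neg \neg A \in w_0$. The conjunct $\neg A \wedge \Box \neg(\neg A)$ witnesses, via the formula $G := \neg A \in \mathcal D$, that $w_0$ is $R$-maximal with respect to some formula in $\mathcal D$, so $w_0$ belongs to the domain $W$ of the \extil{X}-structure $\mathfrak M$ for $\mathcal D$. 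By Lemma~\ref{lemma-main} the truth lemma applies, and since $\neg A \in w_0$ (equivalently $A \notin w_0$, as $w_0$ is maximal consistent), we get $\mathfrak M, w_0 \Vdash \neg A$, i.e.\ $\mathfrak M, w_0 \nVdash A$.

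Finally I would invoke the hypothesis: by assumption the \extil{X}-structure for this particular $\mathcal D$ possesses the property \kgen X, so by the definition given just after Theorem~\ref{theorem:FromGeneralisedToRegular} the frame underlying $\mathfrak M$ is an \ilgen{X}-frame and hence $\mathfrak M$ is an \ilgen{X}-model. We have thus exhibited an \ilgen{X}-model with a world refuting $A$, which is exactly what completeness w.r.t.\ \ilgen{X}-models requires: every formula valid on all \ilgen{X}-models is a theorem of \extil{X}.

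I do not expect any real obstacle here, precisely because Theorem~\ref{glavni} has been formulated so that its hypothesis isolates the only nontrivial ingredient. The genuine difficulty, namely verifying that the canonically defined $S_w$ relations of Definition~\ref{ilx-struktura} actually satisfy the frame condition \kgen X for a given principle \textsf X, has been quarantined into the assumption and is deferred to the case-by-case analyses of the following section. The one point worth stating carefully is why $w_0 \in W$: it is tempting to think the domain consists of arbitrary MCS's, but Definition~\ref{ilx-struktura} restricts $W$ to MCS's that are $R$-maximal with respect to \emph{some} formula of $\mathcal D$, and it is Lemma~\ref{lema-nepraznost} together with the choice $G = \neg A$ that guarantees our refuting world meets this restriction. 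Everything else is a routine unwinding of the truth lemma and of the definition of \ilgen{X}-model.
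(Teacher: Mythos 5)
Your proposal is correct and follows essentially the same route as the paper's own proof: both invoke Lemma~\ref{lema-nepraznost} to obtain an MCS that lands in the domain $W$ of the \extil{X}-structure, apply the truth lemma of Lemma~\ref{lemma-main}, and use the theorem's hypothesis to conclude the structure is an \ilgen{X}-model. The only cosmetic difference is that the paper starts from a consistent formula $A$ (i.e.\ $\extil{X}\nvdash\neg A$) and builds a model satisfying it, whereas you start from a non-theorem and refute it via $G:=\neg A$ --- a trivial contrapositive reformulation.
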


\begin{proof}
Let $A$ be a formula such that $\il{X}\nvdash \neg A$. Lemma \ref{lema-nepraznost} implies there is 
an \extil{X}-MCS $w$ such that $A \wedge \square \neg A \in w.$ 
Let $\mathcal D$ have the usual properties, and contain $A$.
Let ${\mathfrak M}=\la W, R, \{S_w : w \in W\}, V\ra$ be the \extil{X}-structure for $\mathcal{D}$. 
Since $A \wedge \square \neg A \in w$  and $A \in \mathcal{D}$, we have $w \in W$.
Lemma \ref{lemma-main} implies $\mathfrak{M},w\nVdash \neg A.$ 
\end{proof}

\begin{corollary}
The logic \extil{} is complete w.r.t.\ GVS.
\end{corollary}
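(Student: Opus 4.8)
The plan is to derive this as the $X = \emptyset$ instance of Theorem \ref{glavni}. When we take $X$ to be the empty subset of $\{$\textsf{M}, \textsf{M\textsubscript{0}}, \textsf{P}, \textsf{P\textsubscript{0}}, \textsf{R}$\}$, the logic \extil{X} is precisely the base logic \il, and an \ilgen{X}-model is simply an arbitrary generalised Veltman model; thus the \ilgen{}-models are exactly the models of GVS. So it suffices to verify the hypothesis of Theorem \ref{glavni} in this degenerate case and read off its conclusion.

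The hypothesis asks that for every finite adequate set $\mathcal{D}$ the \extil{X}-structure for $\mathcal{D}$ possess the property \kgen{X}. For $X = \emptyset$ there are no axiom schemes beyond those of \il, and \kgen{X} is the conjunction over $X$ of the individual characteristic properties; over the empty set this conjunction is vacuously true, so every generalised Veltman frame satisfies it. Concretely, Lemma \ref{lemma-main} already establishes that each \il-structure for $\mathcal{D}$ is a genuine generalised Veltman model, and for $X = \emptyset$ that is all that is required. Hence the hypothesis of Theorem \ref{glavni} holds automatically.

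Applying Theorem \ref{glavni} then yields completeness of \il with respect to \ilgen{}-models, i.e.\ with respect to all generalised Veltman models, which is exactly GVS. Together with the soundness direction already recorded in Theorem \ref{theorem:GVSSoundAndComplete}, this re-proves the adequacy of \il for GVS, now through the uniform \extil{X}-structure method rather than through the transformation of an ordinary Veltman model. I expect no genuine obstacle here: all the substance resides in the general Theorem \ref{glavni} and its supporting Lemma \ref{lemma-main}, and the corollary is merely the trivial specialisation of that machinery to the case with no extra principles.
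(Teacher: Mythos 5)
Your proposal is correct and is exactly the argument the paper intends: the corollary is the $X=\emptyset$ instance of Theorem \ref{glavni}, where the hypothesis is vacuous because \kgen{X} imposes no condition and Lemma \ref{lemma-main} already guarantees the \extil{}-structure is a generalised Veltman model. Your closing remark that this re-derives Theorem \ref{theorem:GVSSoundAndComplete} by the uniform structure method rather than by transforming an ordinary Veltman model is also the right way to situate the result.
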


Note that any method for transforming generalised to ordinary models like
presented in Theorem \ref{theorem:FromGeneralisedToRegular} or in \cite{Vukovic08} now implies 
completeness of \extil{} w.r.t.\ ordinary Veltman models.

%Unfortunately (but necessarily, as our result for \extil{P_0} shows), these methods do not 
%preserve characteristic properties in general,
%and so are of limited use.
%\joost{I think this may be the third time that this is mentioned in the paper. At some stage we should think if we wish to trim down the number of mentions.}

In the next section 
%\joost{I guess this should now be "the next section" ...} 
we comment on the completeness of the following logics w.r.t.\ GVS: \extil{M}, \extil{M\textsubscript{0}}, \extil{P}, \extil{P_0},  \extil{R}, \extil{W} 
and \extil{W^*}.

\subsection{A note on generalised Veltman semantics and labelling}

In all studied extensions of \extil{} we have to duplicate maximal consistent sets when building 
ordinary Veltman models for consistent sets of formulas. More accurately, no one seems to have 
come up with a natural way of assigning just one purpose to every maximal consistent set of 
formulas. For example, when building a model where $\{ \neg (p \rhd q), \neg(p \rhd r), p \rhd (q 
\vee r) \}$ is true in some world $w$, we could try to use the same set/world $u$ visible from $w$ as a witness for the formulas $ \neg (p \rhd q)$ and $\neg(p 
\rhd r)$ in $w$. For example, this may be the set where the only propositional formula is $p$, 
and no formula of form $\neg(A \rhd B)$ is contained. But, due to $p\rhd (q\vee r)$, in any model where $w$ is there we do require two worlds like $u$ within that model; one of which will 
have an $S_w$-successor satisfying $q$ but not $r$, and the other one an $S_w$-successor satisfying $r$ but not $q$.

GVS doesn't share this problem of duplication, at least not in any known case of a complete 
extension of \extil{}. A generalised model for the problem above is simple. Let $w = \{ \neg (p 
\rhd q), \neg(p \rhd r), p \rhd (q \vee r) \}$, $u =\{ p \}$, $x = \{ q \}$, $y = \{ r \} $, and 
let $wRuS_w \{ x, y \}$. Unspecified propositional formulas are assumed to be false, and 
unspecified $\rhd$-formulas are assumed to be true.

Now, having in mind this generalised model, what can be said about the $wRu$ transition in 
terms of labels? This might be important if we are building a generalised model step-by-step. 
Since $u$ has two roles, it would be natural to allow (even with assuringness) two labels: 
$\{\neg q\}$ and $\{\neg r\}$. And these labels are justified, since indeed 
$\{ x, y \} \nVdash q, r $.
Both these labels are expressible without sets (in terms of criticality, for example, the labels 
would be formulas $q$, and $r$, respectively). We recall that the $S_w$ from Definition \ref{ilx-struktura} indeed takes multiple labels into account.

However, there is another bit of label-related information that these facts do not express: which 
labels \textit{do not} hold. Although  $\{\neg q\}$ and $\{\neg r\}$ are justified choices, the 
label  $\{\neg q, \neg r\}$ is not a good choice. This label would require $\neg p \notin u$, which is 
clearly not the case. This is the information the assuringness allows us to express, and criticality 
does not.\footnote{
Granted, one might say that the inadequacy of the assuring label $\{\neg q, \neg r\}$ is 
equivalent to the inadequacy of the critical label $q \vee r$. 
However, expressing this fact in terms of criticality does not retain structural information 
of our situation; we see a disjunction where really we are only interested in disjuncts.}
Note that such a situation cannot happen in ordinary semantics: if the label 
$\{\neg q, \neg r\}$ is inappropriate for some $wRu$ transition, that means there is $A \in u$ 
with 
$A \rhd q \vee r \in w$. This, since we are now working in ordinary semantics, means there 
should be an $S_w$-successor of $u$ satisfying $q \vee r$. So, either this new world satisfies 
$q$ or $r$. 
So, $\{ \neg q \}$ or $\{ \neg r \} $ had to be inappropriate labels (for $wRu$) too.

\section{Modal completeness of various systems}\label{section:ModalCompleteness}

In this section we explore completeness proofs for various extensions of \extil{}.
We also briefly describe a recent preprint where certain subsystems of \extil{} are explored.

%%%%%%%%%%%%%%%%%%%%%%%%%%%%%%%%%%%%%%%%%%%%%%%%%%%%%%%%%%%%%%%%%%%%%%%%%%%%%%%%%%%%%%%%%%%%%%%%%%
\subsection{The logic \extil{M}}

Completeness of the logic \extil{M} w.r.t.\ GVS is an easy consequence 
of the completeness of \extil{M} w.r.t.\ the ordinary semantics, first proved by de~Jongh and 
Veltman (\cite{JonghVeltman:1990:ProvabilityLogicsForRelativeInterpretability}). 
Another proof of the same result was given by Goris and Joosten, using the construction method 
(\cite{GorisJoosten:2011:ANewPrinciple, Joosten:1998:MasterThesis}).

The frame condition $wRxS_wyRz \Rightarrow xRz$ for \principle{M} is reflected in the following so-called labelling lemma:

%R.~Verbrugge determined the characteristic property \kgen{M} in \cite{Verbrugge}:
%\[
%    uS_w V \Rightarrow (\exists V' \subseteq V)( uS_w V' \ \& \ R[V'] 
%\subseteq R[u]).
%\]

\begin{lemma}[\cite{BilkvaGorisJoosten:2004:SmartLabels}, Lemma 3.7]
\label{lema-ILM}
Let $w$ and $u$ be some \extil{M}-MCS's, and let $S$ be a set of formulas.  
If $w \prec_S u$ then $w \prec_{S \cup u_\emptyset^\square} u$.
\end{lemma}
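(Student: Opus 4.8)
The plan is to unfold the definition of $\prec_{S\cup u_\emptyset^\square}$ and reduce everything to a single application of the hypothesis $w\prec_S u$. So I would fix a finite $S''\subseteq S\cup u_\emptyset^\square$ and a formula $A$ with $A\rhd\bigvee_{G\in S''}\neg G\in w$, aiming to show $\neg A,\Box\neg A\in u$. Split $S''=S_1\cup S_2$ with $S_1\subseteq S$ and $S_2\subseteq u_\emptyset^\square$ both finite. By the definition of $u_\emptyset^\square$ (together with the \il-theorem $A\rhd\bot\leftrightarrow\Box\neg A$, obtained from ${\sf J1}$ and ${\sf J4}$), each element of $S_2$ has the form $\Box\neg A_i$ with $\Box\neg A_i\in u$. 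Setting $D:=\bigvee_{G\in S_1}\neg G$ and $E:=\bigvee_i A_i$, the duals of the $S_2$-members collapse: $\bigvee_{G\in S''}\neg G$ is \il-equivalent to $D\vee\Diamond E$, and the conjunction of the $\Box\neg A_i$ gives $\Box\neg E\in u$. (The degenerate case $S_2=\emptyset$ makes $E=\bot$ and the statement reduces directly to $w\prec_S u$.)

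The key step, and the only place where \principle{M} is used, is the \ilm-derivation
\[ A\rhd(D\vee\Diamond E)\ \to\ (A\wedge\Box\neg E)\rhd D. \]
I would obtain this by instantiating \principle{M} with $C:=\neg E$ to get $A\rhd(D\vee\Diamond E)\to(A\wedge\Box\neg E)\rhd\big((D\vee\Diamond E)\wedge\Box\neg E\big)$, and then noting that $(D\vee\Diamond E)\wedge\Box\neg E\to D$ is an \il-consequence of $\Box\neg E\to\neg\Diamond E$; passing this implication through ${\sf J1}$ and ${\sf J2}$ weakens the right-hand side down to $D$. Since $w$ is an MCS containing $A\rhd(D\vee\Diamond E)$, we conclude $(A\wedge\Box\neg E)\rhd D\in w$, that is $(A\wedge\Box\neg E)\rhd\bigvee_{G\in S_1}\neg G\in w$.

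Now I would apply the hypothesis $w\prec_S u$ to the formula $A\wedge\Box\neg E$ and the finite set $S_1\subseteq S$: this yields $\neg(A\wedge\Box\neg E)\in u$ and $\Box\neg(A\wedge\Box\neg E)\in u$, i.e.\ $\neg A\vee\Diamond E\in u$ and $\Box(\neg A\vee\Diamond E)\in u$. It remains to cancel the $\Diamond E$ disjunct using $\Box\neg E\in u$. From $\Box\neg E\in u$ we get $\neg\Diamond E\in u$, so maximality of $u$ turns $\neg A\vee\Diamond E\in u$ into $\neg A\in u$; and since ${\sf L2}$ gives $\Box\neg E\to\Box\neg\Diamond E$, hence $\Box\neg\Diamond E\in u$, the normal-modal fact $\Box(\neg A\vee\Diamond E)\wedge\Box\neg\Diamond E\to\Box\neg A$ yields $\Box\neg A\in u$. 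This establishes $\neg A,\Box\neg A\in u$, as required.

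The main obstacle is isolating the correct instance of \principle{M} in the key step and verifying that the simplification of its consequent is genuinely \il-valid; everything else is bookkeeping with maximal consistent sets and routine normal-modal reasoning. A secondary point needing care is the translation between the $\rhd\bot$ form appearing in the definition of $u_\emptyset^\square$ and the $\Box\neg(\cdot)$ form used throughout, along with the handling of the empty-disjunction and degenerate cases so that the argument remains uniform.
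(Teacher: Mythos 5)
Your proof is correct, and it follows the standard argument for this lemma (which the paper itself imports from the cited reference without proof): split the label into its $S$-part and its $u_\emptyset^\square$-part, use \principle{M} with $C:=\neg E$ to absorb the $\Diamond E$ disjunct, and then apply $w\prec_S u$ to $A\wedge\Box\neg E$. The bookkeeping with $A\rhd\bot\leftrightarrow\Box\neg A$, the distribution of $\Diamond$ over the disjunction, and the use of ${\sf L2}$ to recover $\Box\neg A\in u$ are all handled correctly.
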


When we combine this with the main result of the previous section we get a simple, elegant and succinct completeness proof.

\begin{theorem}
The logic \extil{M} is complete w.r.t.\ \ilgen{M}-models.
\end{theorem}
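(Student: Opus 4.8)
The plan is to apply the master completeness theorem, Theorem~\ref{glavni}: since \textsf{M} is one of the admissible choices, it suffices to show that for every $\mathcal{D}$ the \extil{M}-structure $\mathfrak{M}$ for $\mathcal{D}$ possesses the frame property \kgen{M}. Recall that \kgen{M} asks that whenever $uS_w V$ there is some $V' \subseteq V$ with $uS_w V'$ and $R[V'] \subseteq R[u]$. Fixing $u, w, V$ with $uS_w V$, the natural candidate is
\[
V' := \{\, v \in V : w \prec_{u_\emptyset^\square} v \,\},
\]
the elements of $V$ that survive as witnesses once the label set is enlarged by the boxed consequences $u_\emptyset^\square$ of $u$. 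The argument then splits into verifying the two conjuncts $uS_w V'$ and $R[V'] \subseteq R[u]$.

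First I would check $uS_w V'$. The clauses $wRu$ and $V' \subseteq R[w]$ are inherited from $uS_w V$ (as $V' \subseteq V$), so only the witness clause remains: for every $S$ with $w \prec_S u$ one needs some $v \in V'$ with $w \prec_S v$. Here the labelling Lemma~\ref{lema-ILM} does the work. From $w \prec_S u$ it yields $w \prec_{S \cup u_\emptyset^\square} u$, and since $uS_w V$ the defining clause of $S_w$ applied to the label set $S \cup u_\emptyset^\square$ produces $v \in V$ with $w \prec_{S \cup u_\emptyset^\square} v$. Two applications of monotonicity (Lemma~\ref{lema3.2}(a)) then give both $w \prec_S v$ and $w \prec_{u_\emptyset^\square} v$, the latter placing $v$ in $V'$. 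Taking $S = \emptyset$ in particular exhibits an element of $V'$, so $V' \neq \emptyset$ and this is a genuine instance of $S_w$.

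The delicate step, and the one I expect to be the crux, is $R[V'] \subseteq R[u]$. Take $v \in V'$ and $z$ with $vRz$, that is $v \prec z$; I must derive $uRz$, i.e.\ $u \prec z$, which by definition means $\neg A, \square \neg A \in z$ for every $A$ with $A \rhd \bot \in u$. Fix such an $A$. Since $v \in V'$ we have $w \prec_{u_\emptyset^\square} v$, whence $u_\emptyset^\square \subseteq v$ by Lemma~\ref{lema3.2}(c); in particular $\square \neg A \in v$. Now the instance $\square \neg A \to A \rhd \bot$ of \textsf{J1} together with maximal consistency of $v$ gives $A \rhd \bot \in v$, and then $v \prec z$ (unwinding the definition of $\prec_\emptyset$) delivers exactly $\neg A, \square \neg A \in z$. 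As $A$ was arbitrary this is $u \prec z$, so $R[V'] \subseteq R[u]$ and hence \kgen{M} holds.

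With both conjuncts established, the \extil{M}-structure for every $\mathcal{D}$ possesses \kgen{M}, and Theorem~\ref{glavni} yields completeness of \extil{M} with respect to \ilgen{M}-models. The only points demanding care throughout are the bookkeeping of label sets --- keeping straight that the relevant enlargement is $u_\emptyset^\square$ and invoking the correct monotonicity direction of Lemma~\ref{lema3.2} --- together with the single genuinely modal input, the \textsf{J1}-driven passage from $\square \neg A$ to $A \rhd \bot$ that lets the boxed formulas of $u$ propagate along $R$.
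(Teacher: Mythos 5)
Your proposal is correct and follows essentially the same route as the paper's proof: the same witness set $V' = \{v \in V : w \prec_{u_\emptyset^\square} v\}$, the same use of the labelling Lemma~\ref{lema-ILM} to verify $uS_wV'$, and the same propagation of the boxed formulas of $u$ through $v$ to $z$ to get $R[V'] \subseteq R[u]$ (the paper phrases this last step via $\square B \in u \Rightarrow \square B \in v \Rightarrow B, \square B \in z$, while you unwind it directly in terms of $A \rhd \bot$ and ${\sf J1}$, but these are the same argument). The extra bookkeeping you supply --- the explicit appeal to Lemma~\ref{lema3.2}(a) and the observation that $V' \neq \emptyset$ --- is consistent with what the paper leaves implicit.
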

\begin{proof}
Here we give the whole proof from \cite{Mikec-Vukovic-20},
    to demonstrate the interplay between labelling lemmas
    and characteristic properties. 
Proofs for other logics are similar, though usually more complex.

Given Theorem \ref{glavni}, it suffices to show that for any set $\mathcal{D}$, the \extil{M}-structure for 
$\mathcal D$ possesses the property \kgen{M}: $uS_w V \Rightarrow (\exists V' \subseteq V)( uS_w V' \ \& \ R[V']
\subseteq R[u]$. 
Let $\la W,R,\{ S_w:w\in W\},V\ra$ be the \extil{M}-structure for $\mathcal{D}.$

Let $uS_w V$ and take $V' = \{ v\in V : w \prec_{u_\emptyset^\square} v \}$. 
We claim $uS_w V'$ and $R[V']\subseteq R[u].$
Suppose $w \prec_S u$. Lemma \ref{lema-ILM} implies 
$w \prec_{S \cup u_\emptyset^\square} u.$
Since $uS_w V$, by Definition \ref{ilx-struktura}, there is $v \in V$ with $w \prec_{S \cup u_\emptyset^\square} v.$ So, $v \in V'$. Thus, $u S_w V'$. 

Now let $v \in V'$ and $z\in W$ be such that $vRz$. 
Since $v \in V'$, we know $w \prec_{u_\emptyset^\square} v$. 
Then for all $\square B \in u$ we have $\square B \in v.$ 
Since $vRz$, we have $B, \square B \in z$. 
So, $u \prec z$ and by Definition \ref{ilx-struktura} $uRz$.
\end{proof}

%%%%%%%%%%%%%%%%%%%%%%%%%%%%%%%%%%%%%%%%%%%%%%%%%%%%%%%%%%%%%%%%%%%%%%%%%%%%%%%%%%%%%%%%%%%%%%%%%%%%
\subsection{The logic \extil{M_0}{}}

Modal completeness of \extil{M_0}{} w.r.t.\ ordinary Veltman semantics was proved in 
\cite{Goris-Joosten-08} by Goris and Joosten. Certain difficulties encountered in this proof were 
our main motivation for using GVS. 
We will sketch one of these difficulties and show in what way the generalised semantics 
overcomes it. 
%Characteristic property \kgen{M$_0$}  (see \cite{Mikec-Perkov-Vukovic-17}):
%\[
%    wRuRxS_wV\ \Rightarrow \ (\exists V'\subseteq V) (uS_wV' \ \& \ R[V']\subseteq R[u])).
%\]
The frame condition $wRxRyS_wuRz \Rightarrow xRz$ for \principle{M_0} is reflected in the following labelling lemma:
\begin{lemma}[\cite{BilkvaGorisJoosten:2004:SmartLabels}, Lemma 3.9]
\label{label-mn}
Let $w$, $u$ and $x$ be \extil{M_0}{}-MCS's, and $S$ an arbitrary set of formulas. 
If $w \prec_S u \prec x$ then $w \prec_{S \cup u_\emptyset^\square} x$.
\end{lemma}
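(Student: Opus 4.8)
The plan is to unfold the definition of $\prec$ and reduce the goal, via the axiom scheme \principle{M_0}, to successive applications of the hypotheses $w \prec_S u$ and $u \prec x$. Fix a finite $S' \subseteq S \cup u_\emptyset^\square$ and a formula $A$ with $A \rhd \bigvee_{G \in S'} \neg G \in w$; I must show $\neg A, \Box \neg A \in x$. First I would split the label set as $S' = \{G_1, \dots, G_m\} \cup \{\Box B_1, \dots, \Box B_n\}$ with each $G_i \in S$ and each $\Box B_j \in u_\emptyset^\square$, using that for an MCS $u$ membership $\Box B_j \in u_\emptyset^\square$ is equivalent to $\Box B_j \in u$. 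Setting $C := B_1 \wedge \dots \wedge B_n$, normality gives $\Box C \in u$, while $\bigvee_j \neg \Box B_j$ is provably $\neg \Box C$; since $\rhd$ respects provable equivalence in its right argument (by ${\sf J1}$ and ${\sf J2}$), the hypothesis becomes $A \rhd (\bigvee_i \neg G_i \vee \neg \Box C) \in w$.

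The central move is to apply \principle{M_0} with its parameter $C$ taken to be the formula $C$ just defined, yielding $\Diamond A \wedge \Box C \rhd (\bigvee_i \neg G_i \vee \neg \Box C) \wedge \Box C \in w$. On the right the disjunct $\neg \Box C$ is annihilated by the conjoined $\Box C$, so the consequent propositionally implies $\bigvee_i \neg G_i$; hence by ${\sf J1}$ and transitivity ${\sf J2}$ we obtain $\Diamond A \wedge \Box C \rhd \bigvee_i \neg G_i \in w$. The surviving labels $G_1, \dots, G_m$ all belong to $S$, so I can invoke $w \prec_S u$ with the antecedent $\Diamond A \wedge \Box C$ to conclude $\neg(\Diamond A \wedge \Box C) \in u$. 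Since $\Box C \in u$, this forces $\Box \neg A \in u$, equivalently $A \rhd \bot \in u$ (by ${\sf J1}$). Finally, $u \prec x$ means $u \prec_\emptyset x$, which by definition says precisely that $D \rhd \bot \in u$ implies $\neg D, \Box \neg D \in x$; applying this to $D = A$ gives $\neg A, \Box \neg A \in x$, as required.

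I expect the main obstacle to be the orchestration of \principle{M_0}: one must notice that the labels inherited from $u$ are exactly boxed formulas, so that their negated disjunction collapses to a single $\neg \Box C$ which M_0 can reabsorb through its $\Box C$ slot, all while the genuine $S$-labels $G_i$ are left untouched in the consequent so that the weaker hypothesis $w \prec_S u$ still applies. The accompanying conversions — between $\Box B_j \in u_\emptyset^\square$ and $\Box B_j \in u$, and between $\Box \neg A \in u$ and $A \rhd \bot \in u$ — are routine consequences of ${\sf J1}$, ${\sf J4}$ and maximal consistency, but they are easy to overlook and are exactly what makes the empty-label hypothesis $u \prec x$ usable in the final step.
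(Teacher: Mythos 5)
Your proof is correct: absorbing the inherited boxed labels into a single $\Box C$ via \principle{M_0}, then applying $w\prec_S u$ to the antecedent $\Diamond A\wedge\Box C$ and finally transferring through $u\prec x$ using the equivalence of $\Box\neg A\in u$ with $A\rhd\bot\in u$, is exactly the standard argument, and the edge cases (no $S$-labels, so the consequent collapses to $\bot$; no boxed labels, so $C=\top$) go through. The paper itself does not reproduce a proof here --- it imports the statement as Lemma 3.9 of \cite{BilkvaGorisJoosten:2004:SmartLabels} --- and your argument coincides with the one given there.
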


To motivate our way of proving completeness (of \extil{M_0}, but also in general) w.r.t.\ GVS, let us sketch a situation for which there are clear benefits in working with GVS. We do this only now because \extil{M\textsubscript{0}} is sufficiently complex to display (some of) these benefits.
Suppose we are building models step-by-step (as in the \textit{construction method} from \cite{Goris-Joosten-08}), and worlds $w$, $u_1$, 
$u_2$ and $x$ occur in the configuration displayed in Figure \ref{fig:slika}.
Furthermore, suppose we need to produce an $S_w$-successor $v$ of $x$. 

\begin{figure}[tb]
    \centering
\tikzset{every picture/.style={line width=0.75pt}} %set default line width to 0.75pt        

\scalebox{0.8}{
\begin{tikzpicture}[x=0.75pt,y=0.75pt,yscale=-1,xscale=1]
%uncomment if require: \path (0,300); %set diagram left start at 0, and has height of 300

%Straight Lines [id:da8226883778887983] 
\draw    (76.5,147.33) -- (126.84,113.45) ;
\draw [shift={(128.5,112.33)}, rotate = 506.06] [color={rgb, 255:red, 0; green, 0; blue, 0 }  ][line width=0.75]    (10.93,-3.29) .. controls (6.95,-1.4) and (3.31,-0.3) .. (0,0) .. controls (3.31,0.3) and (6.95,1.4) .. (10.93,3.29)   ;
\draw [shift={(76.5,147.33)}, rotate = 326.06] [color={rgb, 255:red, 0; green, 0; blue, 0 }  ][fill={rgb, 255:red, 0; green, 0; blue, 0 }  ][line width=0.75]      (0, 0) circle [x radius= 3.35, y radius= 3.35]   ;
%Straight Lines [id:da41209794343363537] 
\draw    (76.5,147.33) -- (126.81,179.26) ;
\draw [shift={(128.5,180.33)}, rotate = 212.4] [color={rgb, 255:red, 0; green, 0; blue, 0 }  ][line width=0.75]    (10.93,-3.29) .. controls (6.95,-1.4) and (3.31,-0.3) .. (0,0) .. controls (3.31,0.3) and (6.95,1.4) .. (10.93,3.29)   ;

%Straight Lines [id:da02621172126547555] 
\draw    (128.5,112.33) -- (177.14,147.98) ;
\draw [shift={(178.75,149.17)}, rotate = 216.24] [color={rgb, 255:red, 0; green, 0; blue, 0 }  ][line width=0.75]    (10.93,-3.29) .. controls (6.95,-1.4) and (3.31,-0.3) .. (0,0) .. controls (3.31,0.3) and (6.95,1.4) .. (10.93,3.29)   ;
\draw [shift={(128.5,112.33)}, rotate = 36.24] [color={rgb, 255:red, 0; green, 0; blue, 0 }  ][fill={rgb, 255:red, 0; green, 0; blue, 0 }  ][line width=0.75]      (0, 0) circle [x radius= 3.35, y radius= 3.35]   ;
%Straight Lines [id:da7109608877613729] 
\draw    (128.5,180.33) -- (177.05,150.22) ;
\draw [shift={(178.75,149.17)}, rotate = 508.19] [color={rgb, 255:red, 0; green, 0; blue, 0 }  ][line width=0.75]    (10.93,-3.29) .. controls (6.95,-1.4) and (3.31,-0.3) .. (0,0) .. controls (3.31,0.3) and (6.95,1.4) .. (10.93,3.29)   ;
\draw [shift={(128.5,180.33)}, rotate = 328.19] [color={rgb, 255:red, 0; green, 0; blue, 0 }  ][fill={rgb, 255:red, 0; green, 0; blue, 0 }  ][line width=0.75]      (0, 0) circle [x radius= 3.35, y radius= 3.35]   ;
%Curve Lines [id:da9099405804663463] 
\draw  [dash pattern={on 4.5pt off 4.5pt}]  (178.75,149.17) .. controls (218.35,119.47) and (217.52,176.18) .. (256.31,148.21) ;
\draw [shift={(257.5,147.33)}, rotate = 503.13] [color={rgb, 255:red, 0; green, 0; blue, 0 }  ][line width=0.75]    (10.93,-3.29) .. controls (6.95,-1.4) and (3.31,-0.3) .. (0,0) .. controls (3.31,0.3) and (6.95,1.4) .. (10.93,3.29)   ;
\draw [shift={(178.75,149.17)}, rotate = 323.13] [color={rgb, 255:red, 0; green, 0; blue, 0 }  ][fill={rgb, 255:red, 0; green, 0; blue, 0 }  ][line width=0.75]      (0, 0) circle [x radius= 3.35, y radius= 3.35]   ;
%Straight Lines [id:da669626531367012] 
\draw    (319.5,147.33) -- (369.84,113.45) ;
\draw [shift={(371.5,112.33)}, rotate = 506.06] [color={rgb, 255:red, 0; green, 0; blue, 0 }  ][line width=0.75]    (10.93,-3.29) .. controls (6.95,-1.4) and (3.31,-0.3) .. (0,0) .. controls (3.31,0.3) and (6.95,1.4) .. (10.93,3.29)   ;
\draw [shift={(319.5,147.33)}, rotate = 326.06] [color={rgb, 255:red, 0; green, 0; blue, 0 }  ][fill={rgb, 255:red, 0; green, 0; blue, 0 }  ][line width=0.75]      (0, 0) circle [x radius= 3.35, y radius= 3.35]   ;
%Straight Lines [id:da6823736442047157] 
\draw    (319.5,147.33) -- (369.81,179.26) ;
\draw [shift={(371.5,180.33)}, rotate = 212.4] [color={rgb, 255:red, 0; green, 0; blue, 0 }  ][line width=0.75]    (10.93,-3.29) .. controls (6.95,-1.4) and (3.31,-0.3) .. (0,0) .. controls (3.31,0.3) and (6.95,1.4) .. (10.93,3.29)   ;

%Straight Lines [id:da0993185176928082] 
\draw    (371.5,112.33) -- (420.14,147.98) ;
\draw [shift={(421.75,149.17)}, rotate = 216.24] [color={rgb, 255:red, 0; green, 0; blue, 0 }  ][line width=0.75]    (10.93,-3.29) .. controls (6.95,-1.4) and (3.31,-0.3) .. (0,0) .. controls (3.31,0.3) and (6.95,1.4) .. (10.93,3.29)   ;
\draw [shift={(371.5,112.33)}, rotate = 36.24] [color={rgb, 255:red, 0; green, 0; blue, 0 }  ][fill={rgb, 255:red, 0; green, 0; blue, 0 }  ][line width=0.75]      (0, 0) circle [x radius= 3.35, y radius= 3.35]   ;
%Straight Lines [id:da19628411805579193] 
\draw    (371.5,180.33) -- (420.05,150.22) ;
\draw [shift={(421.75,149.17)}, rotate = 508.19] [color={rgb, 255:red, 0; green, 0; blue, 0 }  ][line width=0.75]    (10.93,-3.29) .. controls (6.95,-1.4) and (3.31,-0.3) .. (0,0) .. controls (3.31,0.3) and (6.95,1.4) .. (10.93,3.29)   ;
\draw [shift={(371.5,180.33)}, rotate = 328.19] [color={rgb, 255:red, 0; green, 0; blue, 0 }  ][fill={rgb, 255:red, 0; green, 0; blue, 0 }  ][line width=0.75]      (0, 0) circle [x radius= 3.35, y radius= 3.35]   ;
%Curve Lines [id:da15995454957283473] 
\draw  [dash pattern={on 4.5pt off 4.5pt}]  (421.75,149.17) .. controls (461.35,119.47) and (460.52,176.18) .. (499.31,148.21) ;
\draw [shift={(500.5,147.33)}, rotate = 503.13] [color={rgb, 255:red, 0; green, 0; blue, 0 }  ][line width=0.75]    (10.93,-3.29) .. controls (6.95,-1.4) and (3.31,-0.3) .. (0,0) .. controls (3.31,0.3) and (6.95,1.4) .. (10.93,3.29)   ;
\draw [shift={(421.75,149.17)}, rotate = 323.13] [color={rgb, 255:red, 0; green, 0; blue, 0 }  ][fill={rgb, 255:red, 0; green, 0; blue, 0 }  ][line width=0.75]      (0, 0) circle [x radius= 3.35, y radius= 3.35]   ;
%Shape: Circle [id:dp736057228202259] 
\draw   (500.5,147.33) .. controls (500.51,108.56) and (531.95,77.13) .. (570.73,77.13) .. controls (609.5,77.14) and (640.93,108.58) .. (640.93,147.36) .. controls (640.92,186.14) and (609.48,217.57) .. (570.7,217.56) .. controls (531.92,217.55) and (500.49,186.11) .. (500.5,147.33) -- cycle ;
%Shape: Circle [id:dp1640959727554563] 
\draw   (543.92,114.34) .. controls (543.92,100.54) and (555.11,89.34) .. (568.92,89.34) .. controls (582.72,89.34) and (593.92,100.54) .. (593.92,114.34) .. controls (593.92,128.15) and (582.72,139.34) .. (568.92,139.34) .. controls (555.11,139.34) and (543.92,128.15) .. (543.92,114.34) -- cycle ;
%Shape: Circle [id:dp012439211470322453] 
\draw   (545.71,180.35) .. controls (545.71,166.54) and (556.91,155.35) .. (570.71,155.35) .. controls (584.52,155.35) and (595.71,166.54) .. (595.71,180.35) .. controls (595.71,194.15) and (584.52,205.35) .. (570.71,205.35) .. controls (556.91,205.35) and (545.71,194.15) .. (545.71,180.35) -- cycle ;
%Curve Lines [id:da9308283020489383] 
\draw  [dash pattern={on 4.5pt off 4.5pt}]  (371.5,112.33) .. controls (417.27,29.75) and (481.85,167.94) .. (542.99,115.15) ;
\draw [shift={(543.92,114.34)}, rotate = 498.16] [color={rgb, 255:red, 0; green, 0; blue, 0 }  ][line width=0.75]    (10.93,-3.29) .. controls (6.95,-1.4) and (3.31,-0.3) .. (0,0) .. controls (3.31,0.3) and (6.95,1.4) .. (10.93,3.29)   ;
\draw [shift={(371.5,112.33)}, rotate = 299] [color={rgb, 255:red, 0; green, 0; blue, 0 }  ][fill={rgb, 255:red, 0; green, 0; blue, 0 }  ][line width=0.75]      (0, 0) circle [x radius= 3.35, y radius= 3.35]   ;
%Curve Lines [id:da49270264263190566] 
\draw  [dash pattern={on 4.5pt off 4.5pt}]  (371.5,180.33) .. controls (437.17,253.96) and (463.24,139.49) .. (544.48,179.73) ;
\draw [shift={(545.71,180.35)}, rotate = 207.07] [color={rgb, 255:red, 0; green, 0; blue, 0 }  ][line width=0.75]    (10.93,-3.29) .. controls (6.95,-1.4) and (3.31,-0.3) .. (0,0) .. controls (3.31,0.3) and (6.95,1.4) .. (10.93,3.29)   ;
\draw [shift={(371.5,180.33)}, rotate = 48.27] [color={rgb, 255:red, 0; green, 0; blue, 0 }  ][fill={rgb, 255:red, 0; green, 0; blue, 0 }  ][line width=0.75]      (0, 0) circle [x radius= 3.35, y radius= 3.35]   ;

% Text Node
\draw (257.5,147.33) node   {$\CIRCLE $};
% Text Node
\draw (76,163) node   {$w$};
% Text Node
\draw (113,102) node   {$u_{1}$};
% Text Node
\draw (113.5,187) node   {$u_{2}$};
% Text Node
\draw (125,132.5) node   {$\square B_{1}$};
% Text Node
\draw (125,161.5) node   {$\square B_{2}$};
% Text Node
\draw (368,132.5) node   {$\square B_{1}$};
% Text Node
\draw (368,161.5) node   {$\square B_{2}$};
% Text Node
\draw (261,131.5) node   {$\square B_{1} ,\ \square B_{2}$};
% Text Node
\draw (610,126.5) node   {$\square B_{1}$};
% Text Node
\draw (610,167.5) node   {$\square B_{2}$};
% Text Node
\draw (179,164) node   {$x$};
% Text Node
\draw (319,164) node   {$w$};
% Text Node
\draw (356,103) node   {$u_{1}$};
% Text Node
\draw (356.5,186) node   {$u_{2}$};
% Text Node
\draw (422,165) node   {$x$};
% Text Node
\draw (257,165) node   {$v$};
% Text Node
\draw (510,146) node   {$V$};
% Text Node
\draw (557,180) node   {$V_{2}$};
% Text Node
\draw (556,114) node   {$V_{1}$};
\end{tikzpicture}
}

\caption{Left: extending an ordinary Veltman model. Right: extending a generalised Veltman model. Straight lines represent $R$-transitions, while curved lines represent $S_w$-transitions. Full lines represent the starting configuration, and dashed lines represent the transitions that are to be added.
This figure is also taken from \cite{Mikec-Vukovic-20}.}
\label{fig:slika}
\end{figure}
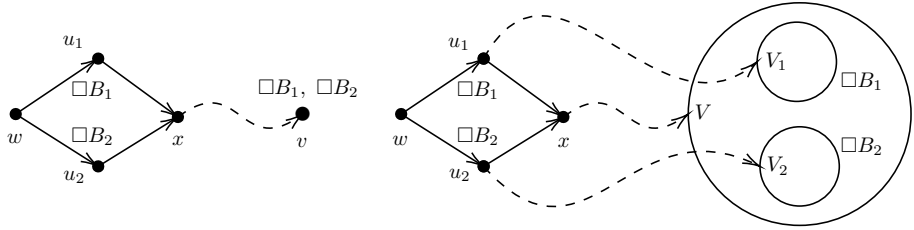

With the ordinary semantics, we need to ensure that for our $S_w$-successor $v$, for each 
$\square B_1 \in u_1$ and $\square B_2 \in u_2$, we have $\square B_1, \square B_2 \in v$. 
It is not obvious that such a construction is possible. 
In case of \extil{M_0}, it was successfully solved in \cite{Goris-Joosten-08} by preserving the invariant 
that sets of boxed formulas in $u_i$ are linearly ordered. 
This way, finite (quasi-)models can always be extended by only looking at the last $u_i$. 

With GVS, we need to produce a whole set of worlds $V$, but the requirements from the frame condition $wRuRxS_wV\ \Rightarrow \ (\exists V'\subseteq V) 
(uS_wV' \ \& \ R[V']\subseteq R[u]))$ on 
each particular world are less demanding. 
For each $u_i$, there has to be a corresponding $V_i \subseteq V$ with $\square B_i$ contained 
(true) in every world of $V_i$. 
Lemma \ref{label-mn} gives a recipe for producing such worlds.

\begin{theorem}
The logic \extil{M_0}{} is complete w.r.t.\ \ilgen{M\textsubscript{0}}-models.
\label{th-ilmn-complete}
\end{theorem}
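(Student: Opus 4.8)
The plan is to reuse, almost verbatim, the template of the \extil{M} completeness proof above, the only change being which labelling lemma drives the argument. By Theorem \ref{glavni} it suffices to show that for every (finite, subformula- and single-negation-closed, $\top$-containing) set $\mathcal{D}$ the \extil{M_0}-structure $\mathfrak{M}=\la W,R,\{S_w:w\in W\},V\ra$ for $\mathcal{D}$ possesses the characteristic property
\[
\kgen{M$_0$}:\quad wRuRxS_wV \ \Rightarrow \ (\exists V'\subseteq V)(uS_wV' \ \& \ R[V']\subseteq R[u]).
\]
So I would fix $w,u,x,V$ with $wRuRxS_wV$ and, exactly as in the \extil{M} case, set $V':=\{\,v\in V: w\prec_{u_\emptyset^\square} v\,\}$, then verify the two conjuncts $uS_wV'$ and $R[V']\subseteq R[u]$ separately.

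The delicate step is $uS_wV'$. Unfolding Definition \ref{ilx-struktura}, we have $wRu$ by hypothesis and $V'\subseteq V\subseteq R[w]$, so the real content is the clause $(\forall S)(w\prec_S u \Rightarrow \exists v\in V'\ w\prec_S v)$. This is where the intermediate world $u$ of $wRuRx$ enters and where Lemma \ref{label-mn} plays the role that Lemma \ref{lema-ILM} played for \extil{M}: assuming $w\prec_S u$ and using $uRx$, i.e.\ $u\prec x$, Lemma \ref{label-mn} yields $w\prec_{S\cup u_\emptyset^\square} x$. Since $xS_wV$, Definition \ref{ilx-struktura} applied with the label $S\cup u_\emptyset^\square$ produces some $v\in V$ with $w\prec_{S\cup u_\emptyset^\square} v$. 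Two applications of Lemma \ref{lema3.2}(a) then split this into $w\prec_{u_\emptyset^\square} v$, placing $v\in V'$, and $w\prec_S v$, which is the witness required.

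The second conjunct $R[V']\subseteq R[u]$ is the same box-transfer argument as for \extil{M}: if $v\in V'$ and $vRz$, then $w\prec_{u_\emptyset^\square} v$ gives $u_\emptyset^\square\subseteq v$ by Lemma \ref{lema3.2}(c), so every $\square B\in u$ lies in $v$, whence $vRz$ forces $B,\square B\in z$; this is precisely $u\prec z$, that is $uRz$. I do not expect a genuine obstacle here --- the whole proof is the bookkeeping of choosing the label $S\cup u_\emptyset^\square$ and feeding it to Lemma \ref{label-mn}. The single conceptual point worth stressing is that this reflects exactly the slack afforded by the generalised semantics discussed around Figure \ref{fig:slika}: we may return one \emph{set} $V'$ meeting the demands inherited from $u$, instead of being forced, as in the ordinary semantics, to realise the boxed formulas of several worlds $u_1,u_2,\dots$ on a single successor.
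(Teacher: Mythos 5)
Your proof is correct and is exactly the argument the paper intends: it omits the details but explicitly sets up Theorem \ref{glavni}, Lemma \ref{label-mn}, and the \extil{M}-template (with $V'=\{v\in V: w\prec_{u_\emptyset^\square}v\}$) as the ingredients, and your use of the label $S\cup u_\emptyset^\square$ fed into the $xS_wV$ clause of Definition \ref{ilx-struktura} is precisely the step the discussion around Figure \ref{fig:slika} is advertising. No gaps; this matches the proof in the cited source \cite{Mikec-Vukovic-20}.
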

\begin{proof}
Omitted. See \cite{Mikec-Vukovic-20} for details.
\end{proof}

%%%%%%%%%%%%%%%%%%%%%%%%%%%%%%%%%%%%%%%%%%%%%%%%%%%%%%%%%%%%%%%%%%%%%%%%%%%%%%%%%%%%%%%%%%%%%%%%%%%
\subsection{The logics \extil{P}, \extil{P_0} and \extil{R} }

The logics \extil{P}, \extil{P_0} and \extil{R} can be proven to be complete with respect to their classes of frames in a similar way (see \cite{Mikec-Vukovic-20} for details).

%The interpretability principle 
%$\textsf{P\textsubscript{0}} = A \rhd \Diamond B \to \square(A \rhd B)$ 
%is introduced in Joosten's master thesis in 1998.
We recall that the interpretability logic \extil{P_0} is incomplete w.r.t.\ Veltman models (\cite{GorisJoosten:2011:ANewPrinciple}). 
Since \extil{P_0} is 
complete w.r.t.\  GVS, this is the first example of 
an interpretability logic complete w.r.t.\ GVS, but incomplete w.r.t.\ 
ordinary semantics.

%\joost{What do you mean by this?} \\
%\luka{This question: if we have a GVM with the property \kgen X, do we have an OVM with the property $(\mathsf X)$, such that the OVM satisfies the same formulas (or at least some particular formula of interest)? For example, is there some sort of an unravelling procedure for going from an \ilgen{X}-model to a \extil{X}-model. (The answer is no)  }\\
%\joost{I still find your explanation a bit too cryptic and aimed and utter insiders. Actually, I propose to move this discussion to Section 3. We explicitly state Verbrugge's result that each ILset model induces an IL model and sketch the proof. Then we observe that this does not hold for any logic since Pzero. What do you think?}

\subsection{The logics \extil{W} and \extil{W^*}}\label{section:LogicsWandStar}

To prove that \extil{W} is complete, one could try to find a sufficiently strong ``labelling lemma'' and use 
Definition \ref{ilx-struktura} (\extil{X}-structure). One candidate might be the following condition:
\[
    w \prec_S u \ \Rightarrow \ (\exists G \in \mathcal{D}) \ \Big( w \prec_{S \cup \{ \square \neg G \} } u 
    \ \&\  G \in u \Big),
\]
where $\mathcal{D}$ is finite, closed under subformulas and such that each $w \in W$ contains $A_w$ and 
$\Box \neg A_w$ for some $A_w \in \mathcal D$. 
If there is such a condition, 
    it would greatly simplify proofs of completeness for extensions of \extil{W}.
Unfortunately, at the moment we do not know if such a condition can be formulated and proved.

Another approach is to use a 
modified version of Definition \ref{ilx-struktura} to work with  \extil{W} and its extensions. This way we won't 
require a labelling lemma, but we lose generality in the following sense. To prove the completeness of \extil{XW}, 
for some $X$, it no longer suffices to simply show that the structure defined in Definition \ref{ilx-struktura} 
has the required characteristic property (when each world is an \extil{X}-MCS). Instead, the characteristic 
property of \extil{X} has to be shown to hold on the modified structure. So, to improve compatibility with proofs 
based on Definition \ref{ilx-struktura}, we should prove the completeness of $\extil{W}$ with a definition as similar 
 to Definition \ref{ilx-struktura} as possible. That is what we do in the remainder of this section. 
This approach turns out to be good enough for \extil{W^*} (\extil{WM_0}). We didn't succeed in using 
it to prove the completeness of \extil{WR}. However, to the best of our knowledge, \extil{WR} might not be complete 
at all.

We have already mentioned the \textsf{Not-W} frame condition in Definition \ref{defn:not_w} that characterises when \principle{W} fails on GVS.
The positive condition \kgen{W} from \cite{Mikec-Perkov-Vukovic-17} is given by:
\[
\kgen{W} \ : = \    uS_w V \ \Rightarrow \  (\exists V' \subseteq V)\ \big(\, u S_w V' \ \& \ R[V'] \cap 
    S_w^{-1}[V] = 
    \emptyset \, \big). 
\]
We will use (this formulation of) \kgen W in what follows. We note here that the \kgen W condition can be formulated in a more informative way. 
Whenever there are $w$, $u$ and $V$ such that $(w, u, V)$ is a counterexample to \kgen W,
    there is $U \subseteq V$ such that:
\begin{enumerate}
    \item[(i)]  $(w, u, U)$ is a counterexample to \kgen W;
    \item[(ii)] $R[U] \cap U = \emptyset$;
    \item[(iii)] there are sets $U_0$ and $\overline{U}$ such that: 
    \begin{itemize}
    \item
    $U = U_0 \cup \overline{U}$ and $\overline{U} \neq \emptyset$;
        \item $U_0 = \{ v \in U : R[v] \cap S_w^{-1}[U] = \emptyset \}$;
        \item $\overline{U} = \{ v \in U \setminus U_0 : \forall V'\ \big(\, \exists z\ vRzS_w V'\subseteq U \ \Rightarrow \ 
                                                      V' \cap \overline{U} \neq \emptyset \, \big) \}$.
    \end{itemize}
\end{enumerate}
This new formulation tells us that we can pick a set $U$ and a quasi-partition $\{ U_0, \overline{U} \}$ of $U$
    such that points in $U_0$ cannot ``return'' to $U$, 
    while the points in $\overline{U}$ can ``return'' to $U$, and have an additional property
    that every such ``return'' intersects (not only $U$ but also) $\overline{U}$.
The proof that such $U$ can always be found will be available in the third author's PhD thesis (\cite{luka-phd}).

In the proof of completeness of logic \extil{W} we will use the following two lemmas. In what follows, \extil{WX} 
denotes an arbitrary extension of \extil{W}.

\begin{lemma}[\cite{BilkvaGorisJoosten:2004:SmartLabels}, Lemma 3.12]
\label{problemi-W}
Let $w$ be an \extil{WX}-MCS, and $B$ and $C$  formulas such that $\neg (B\rhd C)\in w.$ 
Then there is an \extil{WX}-MCS $u$ such that $w{\prec_{\{ \square\neg B,\neg C\}}}u$ and  $B\in u.$
\end{lemma}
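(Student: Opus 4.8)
The plan is to reduce the lemma to a single consistency claim and then derive a contradiction using the axiom \principle{W}. Set $S := \{\square\neg B,\,\neg C\}$. I would show that the set $\{B\}\cup w_S^\boxdot$ is \extil{WX}-consistent; once this is established, Lindenbaum's lemma gives an \extil{WX}-MCS $u \supseteq \{B\}\cup w_S^\boxdot$, and since (by the remark following Definition \ref{def-smart-label-sets}) $w\prec_S u$ is equivalent to $w_S^\boxdot\subseteq u$, such a $u$ witnesses the lemma. So everything reduces to proving that $\{B\}\cup w_S^\boxdot$ is consistent, which I would do by contradiction.

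Assume inconsistency. Unwinding Definition \ref{def-smart-label-sets}, every member of $w_S^\boxdot$ is a pair $\neg A_i,\ \square\neg A_i$ arising from some $A_i\rhd E_i\in w$ with $E_i=\bigvee_{G\in S_i'}\neg G$ and $S_i'\subseteq S$ finite; hence each $E_i$ is one of $\bot$, $\Diamond B$, $C$, $\Diamond B\vee C$, and in particular $\vdash E_i\to \Diamond B\vee C$. Inconsistency yields finitely many of these with $\vdash_{\extil{WX}} B\to\bigvee_i(A_i\vee\Diamond A_i)$. Writing $A:=\bigvee_i A_i$ and using $\Diamond A_1\vee\cdots\vee\Diamond A_n\leftrightarrow\Diamond A$, this reads $\vdash B\to A\vee\Diamond A$; meanwhile right monotonicity of $\rhd$ (a standard \il-theorem via {\sf J1} and {\sf J2}) together with {\sf J3} collapses the finitely many $A_i\rhd E_i$ into $A\rhd \Diamond B\vee C\in w$.

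From here I would assemble $B\rhd C\in w$. First {\sf J1} turns $\vdash B\to A\vee\Diamond A$ into $B\rhd A\vee\Diamond A\in w$; combining $A\rhd \Diamond B\vee C$ with {\sf J5} ($\Diamond A\rhd A$) and {\sf J3} gives $A\vee\Diamond A\rhd \Diamond B\vee C\in w$, so {\sf J2} yields $B\rhd \Diamond B\vee C\in w$. The decisive step is to invoke \principle{W}, which upgrades this to $B\rhd (\Diamond B\vee C)\wedge\square\neg B\in w$. Since $\Diamond B\wedge\square\neg B$ is contradictory, $(\Diamond B\vee C)\wedge\square\neg B$ is equivalent to $C\wedge\square\neg B$, so right monotonicity of $\rhd$ delivers $B\rhd C\in w$, contradicting $\neg(B\rhd C)\in w$.

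I expect the only genuinely nonroutine point to be recognizing that \principle{W} is exactly what annihilates the unwanted $\Diamond B$ disjunct: the label $\square\neg B$ is precisely what makes the $\Diamond B$ branch of $E_i$ collapse after \principle{W} is applied, which is why this stronger label (compared with the bare $\{\neg C\}$ of Lemma \ref{problemi}) becomes admissible for \extil{W} and its extensions, at the cost of no longer being able to also place $\square\neg B$ in $u$. The bookkeeping of merging the $A_i\rhd E_i$ into a single $A\rhd \Diamond B\vee C$ and the degenerate case $n=0$ (where $\vdash\neg B$, handled uniformly by taking $A=\bot$) are routine. Since only \il-theorems and \principle{W} are used, the argument goes through in any extension \extil{WX}.
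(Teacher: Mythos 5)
Your proof is correct, and since the paper only cites this lemma from the Bílková--Goris--Joosten ``smart labels'' paper without reproducing a proof, there is nothing internal to compare against; your argument is precisely the standard one for this lemma. The reduction to consistency of $\{B\}\cup w_S^\boxdot$, the collapse of the finitely many $A_i\rhd E_i$ via right $\rhd$-monotonicity, ${\sf J3}$, ${\sf J5}$ and ${\sf J2}$ into $B\rhd \Diamond B\vee C\in w$, and the use of \principle{W} to kill the $\Diamond B$ disjunct all check out, including the degenerate case.
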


\begin{lemma}[\cite{BilkvaGorisJoosten:2004:SmartLabels}, Lemma 3.13]
\label{nedostaci-W}
Let $w$ and $u$ be some \extil{WX}-MCS, $B$ and $C$ some formulas, and $S$ a set of formulas such that 
$B\rhd C\in w,$ $w\prec_S u$  and $B\in u.$
Then there is an \extil{WX}-MCS $v$ such that $w\prec_{S\cup \{ \square\neg B\} } v$ and 
$C,\square\neg C\in v.$
\end{lemma}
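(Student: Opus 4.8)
The plan is to mirror the proof of the basic witness Lemma \ref{nedostaci}, the only genuinely new ingredient being a single application of the principle \principle{W} to absorb the extra label $\Box\neg B$. By the definitions of $\prec$ and the sets $w_S^\boxdot$ (Definitions \ref{def-smart-label} and \ref{def-smart-label-sets}), producing the required $v$ is the same as producing an \extil{WX}-MCS containing $w_{S\cup\{\Box\neg B\}}^\boxdot\cup\{C,\Box\neg C\}$. I would obtain it by two Lindenbaum steps: first show $w_{S\cup\{\Box\neg B\}}^\boxdot\cup\{C\}$ is consistent, and then run the standard Löb argument (verbatim as in Lemma \ref{nedostaci}, and available precisely because $w_{S\cup\{\Box\neg B\}}^\boxdot$ contains $\Box\neg A$ alongside each $\neg A$) to throw in $\Box\neg C$ without losing consistency.

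The heart of the matter is the consistency of $w_{S\cup\{\Box\neg B\}}^\boxdot\cup\{C\}$. Suppose it fails. Then there are finitely many $A_1,\dots,A_n$ with $A_i\rhd\bigvee_{G\in S_i'}\neg G\in w$ for finite $S_i'\subseteq S\cup\{\Box\neg B\}$, such that $\{C\}\cup\{\neg A_i,\Box\neg A_i : i\le n\}$ is inconsistent. Writing $E:=\bigvee_i A_i$, inconsistency yields $\vdash C\to E\vee\Diamond E$; by necessitation and ${\sf J1}$ this gives $\vdash C\rhd(E\vee\Diamond E)$, and combining $\vdash E\rhd E$ (${\sf J1}$) with $\vdash\Diamond E\rhd E$ (${\sf J5}$) through ${\sf J3}$ and ${\sf J2}$ we get $\vdash C\rhd E$. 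Setting $S':=\bigcup_i S_i'$ and $D:=\bigvee_{G\in S'}\neg G$, each $A_i\rhd D\in w$ (since $\vdash\bigvee_{G\in S_i'}\neg G\to D$, via ${\sf J1}$ and ${\sf J2}$), so ${\sf J3}$ gives $E\rhd D\in w$, and ${\sf J2}$ with $B\rhd C\in w$ then yields $B\rhd D\in w$.

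The final step is where \principle{W} enters. Write $D=D_0\vee\Diamond B$ with $D_0:=\bigvee_{G\in S'\cap S}\neg G$; if $\Box\neg B\notin S'$ the disjunct $\Diamond B$ is simply absent, $D=D_0$, and one proceeds directly to the conclusion. Applying \principle{W} to $B\rhd D\in w$ produces $B\rhd(D\wedge\Box\neg B)\in w$. Since $\Diamond B\wedge\Box\neg B$ is refutable we have $\vdash(D\wedge\Box\neg B)\to D_0$, so ${\sf J1}$ and ${\sf J2}$ give $B\rhd D_0\in w$. As $S'\cap S$ is a finite subset of $S$, the hypothesis $w\prec_S u$ now forces $\neg B\in u$ by Definition \ref{def-smart-label}, contradicting $B\in u$. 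This contradiction secures the consistency we need, and the Löb step followed by Lindenbaum delivers the desired $v$.

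I expect the main obstacle to be precisely this last step: one must recognise that strengthening the label from $S$ to $S\cup\{\Box\neg B\}$ is exactly what \principle{W} can pay for, because the only ``new'' disjunct $\Diamond B$ that the enlarged label can contribute to $D$ is annihilated by the $\Box\neg B$ conjunct that \principle{W} appends, landing the disjunction back inside $S$. Everything else is a routine adaptation of Lemma \ref{nedostaci}.
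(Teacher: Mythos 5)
Your proof is correct and follows the standard argument for this lemma (which the paper itself only imports from \cite{BilkvaGorisJoosten:2004:SmartLabels} without reproducing a proof): derive $B\rhd D\in w$ for the disjunction $D$ of negated labels, then use \principle{W} to append $\Box\neg B$ and annihilate the extra disjunct $\Diamond B$ contributed by the enlarged label, landing back on a disjunction over a finite subset of $S$ and contradicting $w\prec_S u$ together with $B\in u$. The two-step split (first securing consistency with $C$, then the L\"ob step for $\Box\neg C$, which works because $w^{\boxdot}_{S\cup\{\Box\neg B\}}$ pairs each $\neg A$ with $\Box\neg A$ and is closed under the relevant disjunctions) is a harmless reorganisation of the usual one-step argument.
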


Given a binary relation $R$, let $\dot{R}[x] =R[x]\cup \{ x\}.$ 
If the set $\dot{R}[x]$ contains maximal consistent sets
    (which it usually does in this section),
    then $\bigcup\dot{R}[x]$ is a set of formulas.
If satisfaction coincides with formulas contained, 
    then it is useful to think of $\bigcup\dot{R}[x]$
    as the set of formulas $B$ such that either
    $B$ or $\Diamond B$ is satisfied in $x$
    (however, one has to be careful with such an interpretation,
    since we do not claim a truth lemma to hold for all formulas).

\begin{definition}
\label{def-ilw-struktura}
Let \textsf{X} be \textsf{W} or \textsf{W$^*$}. We say that $\mathfrak{M} = \la W, R, \{S_w : w \in W\}, V\ra$ 
is \emph{the \extil{X}-structure for a set of formulas $\mathcal{D}$} if:
        \[
            \begin{array}{rll}
                   W &:=& \{ w : w \text{ is an \extil{X}-MCS and for some } G \in \mathcal{D}, G 
                   \wedge \square \neg G \in w \};\\
                    wRu &:\Leftrightarrow & w \prec u;\\  
                    uS_w V &:\Leftrightarrow & wRu \mbox{\ and, } V \subseteq R[w]\mbox{ and, one of the following holds:}\\
                   &  & (a) \  V \cap \dot{R}[u] \neq \emptyset;\\
                    & & (b) \ (\forall S) \left (w \prec_S u \Rightarrow (\exists v \in V) \left (\exists G \in 
                \mathcal{D} \cap \bigcup\dot{R}[u] \right ) \ w \prec_{S\cup \{\square\neg G\} } v \right );\\
                    w\in V(p) &: \Leftrightarrow & p\in w.
            \end{array}
        \]
\end{definition}
With this definition, we can now prove a truth lemma.
\begin{lemma}
Let \textsf{X} be \textsf{W} or \textsf{W*}. The \extil{X}-structure $\mathfrak{M}$ for $\mathcal{D}$ is a generalised 
Veltman model. Furthermore, the 
following holds:
\[ 
    \mathfrak{M},w\Vdash G \ \mbox{ if and only if }\ G\in w,
\]
for each $G\in\mathcal{D}$ and $w\in W.$ 
\label{lemma-main-w}
\end{lemma}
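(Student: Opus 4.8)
The plan is to argue by induction on formula structure, after first checking that $\mathfrak M$ is a genuine generalised Veltman model; the whole difficulty is concentrated in the $\rhd$-clause of the truth lemma, and within it in the direction $\neg(B\rhd C)\in w \Rightarrow \mathfrak M,w\nVdash B\rhd C$.

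First I would verify the frame conditions. Transitivity of $R=\prec$ is Lemma \ref{lema3.2}(b) with $S=T=\emptyset$, and converse well-foundedness follows from $\mathcal D$ being finite together with the built-in $R$-maximality: if $wRu$ and $G\wedge\Box\neg G\in w$, then $\Box\neg G\in w$ yields $G\rhd\bot\in w$ (by \textsf{J1}), so $\neg G,\Box\neg G\in w_\emptyset^\boxdot\subseteq u$; hence the witnessing formula of a world is permanently lost along $R$, and an infinite ascending chain would require infinitely many distinct formulas of the finite set $\mathcal D$. For the $S_w$-conditions, nonemptiness and $S_w\subseteq R[w]\times(\mathcal P(R[w])\setminus\{\emptyset\})$ come from instantiating clause (b) at $S=\emptyset$; monotonicity is immediate since in either clause the witness survives; quasi-reflexivity $uS_w\{u\}$ and the frame condition $wRuRv\Rightarrow uS_w\{v\}$ hold through clause (a), because $u\in\dot R[u]$ and $R[u]\subseteq\dot R[u]$. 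The one genuine frame check is quasi-transitivity in the form (2): given $uS_wV$ and $vS_wZ_v$ for all $v\in V$, I would case-split on whether $uS_wV$ holds via (a) or (b) and then on how the relevant $vS_wZ_v$ holds, using $\dot R[v^*]\subseteq\dot R[u]$ for $v^*\in\dot R[u]$, Lemma \ref{lema3.2}(b) to compose $\prec_S$ with $\prec$, and Lemma \ref{lema3.2}(a) to discard the surplus boxed label $\Box\neg H$ that clause (b) contributes while retaining a single $\Box\neg G$ with $G\in\mathcal D\cap\bigcup\dot R[u]$.

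For the truth lemma I would induct on $G\in\mathcal D$; the atomic, Boolean and $\Box$ cases are the standard ones (the $\Box$-case using box-propagation along $\prec$ in one direction and a Löb-style critical successor in the other). In the $\rhd$-clause, the direction $B\rhd C\in w\Rightarrow\mathfrak M,w\Vdash B\rhd C$ is where \principle{W} enters: for a given $R$-successor $u$ with $B\in u$ and for every $S$ with $w\prec_S u$, I would apply the \principle{W}-labelling Lemma \ref{nedostaci-W} to obtain $v_S$ with $w\prec_{S\cup\{\Box\neg B\}}v_S$ and $C,\Box\neg C\in v_S$, and set $V:=\{v_S\}$. By the induction hypothesis $V\Vdash C$, while $uS_wV$ holds through clause (b) with the choice $G:=B$, which lies in $\mathcal D\cap\bigcup\dot R[u]$ since $B\in u$. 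This is precisely the point of the extra $\Box\neg G$ in clause (b) of Definition \ref{def-ilw-struktura}: it records the $\Box\neg B$ that \principle{W} makes available.

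The hard part will be the converse $\neg(B\rhd C)\in w\Rightarrow\mathfrak M,w\nVdash B\rhd C$. Clause (a) of $S_w$ together with the condition $wRuRv\Rightarrow uS_w\{v\}$ forces a subtlety: since $uS_w\{v\}$ for every $v\in\dot R[u]$, a witnessing $u$ can refute $B\rhd C$ only if none of $\dot R[u]$ forces $C$, that is, only if $\neg C\wedge\Box\neg C\in u$. The generic existence Lemmas \ref{problemi} and \ref{problemi-W} deliver $\Box\neg B$ but not $\Box\neg C$, so they do not suffice. What saves the day is the \il-validity $\neg(B\rhd C)\to\Diamond(B\wedge\neg C\wedge\Box\neg C)$, provable from \textsf{J1}--\textsf{J5} by splitting $B$ as $(B\wedge\Box\neg C)\vee(B\wedge\Diamond C)$ and using \textsf{J5} on the second disjunct; I would upgrade it to a labelled existence statement, proved by the same \textsf{J}-style reasoning now enforcing the label $\{\neg C\}$, that produces a critical successor $u$ with $w\prec_{\{\neg C\}}u$ and $B,\neg C,\Box\neg C\in u$ (so $u\in W$ with witness $\neg C$). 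With such $u$ fixed, every $V$ with $uS_wV$ satisfies $V\nVdash C$: if $uS_wV$ holds via (a), any $v^*\in V\cap\dot R[u]$ has $\neg C\in v^*$, because $\Box\neg C\in u$ gives $C\rhd\bot\in u$ and hence $\neg C\in u_\emptyset^\boxdot\subseteq v^*$; if it holds via (b), instantiating at $S=\{\neg C\}$ and applying Lemma \ref{lema3.2}(a,c) yields some $v\in V$ with $\neg C\in v$. Either way the induction hypothesis supplies an element of $V$ not forcing $C$. The case \textsf{X}${}={}$\textsf{W$^*$} is handled the same way, threading the \principle{M_0}-labelling Lemma \ref{label-mn} through the construction. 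I expect the delicate step to be exactly this strengthened critical-successor lemma, since it must reconcile the $\{\neg C\}$-label demanded by clause (b) with the $\Box\neg C$-maximality demanded by clause (a) and the condition $wRuRv\Rightarrow uS_w\{v\}$.
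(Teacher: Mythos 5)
Your overall architecture is the right one and matches the strategy the paper indicates (the paper itself defers the details to the literature but points to the proof of Lemma \ref{lemma-main} as the template): check the frame conditions, prove the truth lemma by induction on $G\in\mathcal D$, and concentrate on the $\rhd$-case. The frame checks and the forward direction $B\rhd C\in w\Rightarrow w\Vdash B\rhd C$ are fine, provided that $V$ is taken to be the whole family $\{v_S : w\prec_S u\}$ rather than a singleton for one particular $S$, since clause (b) of Definition \ref{def-ilw-struktura} quantifies over all labels $S$ simultaneously.

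The genuine problem is in the backward direction, where you diagnose a difficulty that is not there and then patch it with a lemma whose justification contains an error. You claim that Lemma \ref{problemi} ``delivers $\Box\neg B$ but not $\Box\neg C$'' and hence cannot cope with clause (a). But $w\prec_{\{\neg C\}}u$ already yields $\neg C,\Box\neg C\in u$: take $A:=C$ and $S':=\{\neg C\}$ in Definition \ref{def-smart-label}; since $C\rhd\neg\neg C\in w$ trivially by ${\sf J1}$, we get $\neg C,\Box\neg C\in w_{\{\neg C\}}^\boxdot\subseteq u$. More importantly, the label propagates along $R$: for $v\in\dot R[u]$, either $v=u$ or $u\prec v$, and Lemma \ref{lema3.2}(b) gives $w\prec_{\{\neg C\}}v$, whence $\neg C\in v$ by Lemma \ref{lema3.2}(c). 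So the $u$ supplied by Lemma \ref{problemi} already refutes every $V$ with $uS_wV$ — in case (a) by this propagation, in case (b) by instantiating at $S=\{\neg C\}$ and using Lemma \ref{lema3.2}(a),(c) — exactly as in the proof of Lemma \ref{lemma-main}. Your strengthened critical-successor lemma is therefore redundant, and as stated it is also defective at one point: you assert that the new $u$ with $B,\neg C,\Box\neg C\in u$ lies in $W$ ``with witness $\neg C$'', but membership in $W$ requires $G\wedge\Box\neg G\in u$, which for $G=\neg C$ reads $\neg C\wedge\Box C$, whereas what you have is $\neg C\wedge\Box\neg C$. Nothing in your construction supplies a valid witness, because you have traded away the $\Box\neg B$ that Lemma \ref{problemi} provides and that makes $B$ itself the witness. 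The fix is simply to delete the detour and use Lemma \ref{problemi} as is.
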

\begin{proof}
The proof of this claim is lengthy (mostly due to the way quasi-transitivity is defined in Definition \ref{def-ilw-struktura}). However, the proof is straightforward. 
See our comment in the proof of Lemma \ref{lemma-main}.
For details please refer to \cite{Mikec-Vukovic-20}.
\end{proof}
This lemma brings us one step away from a completeness proof. We first introduce the following notation: Let $B$ be a formula, and $w$ a world in a generalised Veltman model.
We write $[B]_w$ for $\{ u : wRu$ and $u\Vdash B\}$.
\begin{theorem}
The logic \extil{W} is complete w.r.t.\ \ilgen{W}-models.
\label{tm-ilw}
\end{theorem}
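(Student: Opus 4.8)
The plan is to reuse, almost verbatim, the completeness template of Theorem \ref{glavni} and the \extil{M} case, the essential difference being that Theorem \ref{glavni} does \emph{not} cover \principle{W} (it is not among the admissible $X \subseteq \{\textsf{M},\textsf{M}_0,\textsf{P},\textsf{P}_0,\textsf{R}\}$). Hence the tailored structure of Definition \ref{def-ilw-struktura} has to be shown to be an \ilgen{W}-model by a direct, \principle{W}-specific argument, and this is where all the real work sits.

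First I would set up the refutation exactly as before. Assume $\extil{W}\nvdash\neg A$; Lemma \ref{lema-nepraznost} gives an \extil{W}-MCS $w$ with $A\wedge\Box\neg A\in w$. Fix a finite $\mathcal{D}$, closed under subformulas and single negations, with $\top,A\in\mathcal{D}$, and form the \extil{W}-structure $\mathfrak{M}=\la W,R,\{S_w:w\in W\},V\ra$ for $\mathcal{D}$ from Definition \ref{def-ilw-struktura}. By Lemma \ref{lemma-main-w}, $\mathfrak{M}$ is a generalised Veltman model (in particular $R$ is converse well-founded) and the truth lemma holds on $\mathcal{D}$; since $A\in\mathcal{D}$ and $A\wedge\Box\neg A\in w$ we get $w\in W$ and $\mathfrak{M},w\nVdash\neg A$. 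It therefore only remains to verify that $\mathfrak{M}$ satisfies the characteristic property \kgen{W}, i.e.\ that $\mathfrak{M}$ is an \ilgen{W}-model.

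For this crux I would argue contrapositively. Suppose \kgen{W} fails on $\mathfrak{M}$. By the characterisation of \textsf{Not-W} (the lemma immediately following Definition \ref{defn:not_w}) we then have $\mathfrak{M}\models\textsf{Not-W}$, which supplies $w$, $z_0$, and sequences $\{Y_i\}_{i\in\omega}$, $\{y_i\}$, $Z$, $\{z_{i+1}\}$ producing an infinite walk $z_0\,S_w\,Y_0\ni y_0\,R\,z_1\,S_w\,Y_1\ni y_1\,R\,z_2\,S_w\cdots$. Note that converse well-foundedness of $R$ alone cannot forbid this walk, since each $S_w$-step moves only \emph{within} $R[w]$ and need not raise $R$-height; the \principle{W}-content must come from the labels. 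Applying clause (b) of Definition \ref{def-ilw-struktura} at the $i$-th $S_w$-step, I extract a formula $G_i\in\mathcal{D}\cap\bigcup\dot R[z_i]$ and a witness $v\in Y_i$ with $w\prec_{\{\Box\neg G_i\}}v$; by Lemma \ref{lema3.2}(c) this forces $\Box\neg G_i\in v$, hence $\neg G_i$ into everything $R$-above $v$, so $G_i$ is permanently falsified from the next level of the walk onward. Since each $G_i$ is still ``alive'' at level $i$ (as $G_i\in\bigcup\dot R[z_i]$) but dead thereafter, the $G_i$ are pairwise distinct; as they all lie in the finite set $\mathcal{D}$, the walk cannot be infinite --- a contradiction. (The alternative clause (a), justified by $V\cap\dot R[u]\neq\emptyset$, is treated separately and shown unable to sustain the \textsf{Not-W} configuration.)

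The hard part will not be the finiteness bookkeeping but the \emph{alignment of witnesses}: \textsf{Not-W} hands us its own $y_i\in Y_i$ and its own ``returns'', whereas clause (b) only guarantees that \emph{some} element of $Y_i$ carries the relevant $\Box\neg G_i$-label, and these need not be the same point. Reconciling the two is precisely what the refined reformulation of \kgen{W} --- the passage to a subset $U\subseteq V$ with a quasi-partition $\{U_0,\overline{U}\}$ in which every $S_w$-return meets $\overline{U}$ --- is engineered to achieve, and carrying out that alignment is the technical heart of the proof. I note finally that the resulting model need not be finite; since completeness asks only for \emph{some} refuting \ilgen{W}-model, decidability (which would require a filtration or finiteness argument) is a separate matter not needed here.
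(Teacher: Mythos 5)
Your set-up coincides with the paper's: reduce everything via Lemma \ref{lemma-main-w} to showing that the \extil{W}-structure of Definition \ref{def-ilw-struktura} possesses \kgen{W}. From there you diverge. The paper works directly with a putative counterexample $(w,u,V)$ to \kgen{W}: it introduces $n=2^{|\mathcal D|}$ auxiliary relations $S_w^i$ indexed by the subsets $\mathcal D_i\subseteq\mathcal D$, picks a maximal $m<n$ admitting suitable $U\in\mathcal V$ and $U'\subseteq U$ satisfying (i) and (ii), and contradicts maximality. You instead pass to ${\sf Not\text{-}W}$ and try to kill the resulting infinite walk $z_0S_wY_0\ni y_0Rz_1S_wY_1\ni y_1Rz_2\cdots$ by a pigeonhole on formulas $G_i\in\mathcal D$.

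The pigeonhole step is where your argument breaks, and it is exactly the point you flag as ``alignment of witnesses'' and then defer. Clause (b) of Definition \ref{def-ilw-struktura} only produces \emph{some} $v\in Y_i$ with $w\prec_{S\cup\{\Box\neg G_i\}}v$, whereas the walk continues from the point $y_i\in Y_i$ handed to you by ${\sf Not\text{-}W}$, which need not be that $v$. So $\Box\neg G_i$ need not reach $y_i$, need not propagate into $z_{i+1}$, and $G_i$ may still be ``alive'' later; the claim that the $G_i$ are pairwise distinct does not follow. A second, related point: the walk is not an $R$-chain ($y_{i+1}$ need not lie $R$-above $z_{i+1}$), so even a label that does land on $y_i$ only stays dead at later stages if the \emph{accumulated} label $\{\Box\neg G_0,\dots,\Box\neg G_i\}$ is carried, via Lemma \ref{lema3.2}, by the very point from which the walk continues at every stage --- which is again the alignment you have not established. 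Your proposed repair (the quasi-partition reformulation of \kgen{W}) is itself stated in the paper only with its proof deferred to \cite{luka-phd}, and you do not indicate how it yields the alignment; clause (a) is likewise dismissed without argument. Note that this is a different kind of omission from the paper's: the paper's sketch leaves out the verification that $m+1$ satisfies (i) and (ii), a concrete claim inside a fully specified construction, whereas your sketch leaves out an unspecified repair of an inference that, as written, is invalid.
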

\begin{proof}
Sketch.
In the light of Lemma \ref{lemma-main-w}, it suffices to show that the \extil{W}-structure $\mathfrak{M}$ for $\mathcal{D}$ possesses the property 
\kgen{W}. Recall the characteristic property \kgen{W}: 
\[
  uS_w V \ \Rightarrow \  (\exists V' \subseteq V)\ \big(\, u S_w V' \ \& \ R[V'] \cap 
    S_w^{-1}[V] = 
    \emptyset \, \big). 
\]
Suppose for a contradiction that there are $w$, $u$ and $V$ such that:
\begin{equation}
    \label{newgen}
     u S_w V \ \& \ (\forall V' \subseteq V)(u S_w V' \Rightarrow R[V'] \cap 
     S_w^{-1}[V] \neq \emptyset ).
\end{equation}
Let $\mathcal{V}$ denote the collection of all such sets $V$ (keeping $w$ and $u$ fixed). 

Let $n = 2^{|\mathcal{D}|}$. Fix any enumeration $\mathcal{D}_0, \dots, \mathcal{D}_{n - 1}$  of $\mathcal{P}(\mathcal{D})$ that satisfies $\mathcal{D}_0=\emptyset.$
We define a new relation $S_w^i$ for each $0 \leq i < n$
%, $y\in W$ and $U\subseteq W$ 
%\joost{I things away that I think only leds the reader's focus astray}\\
as follows:
   		\[
	  		y S_w^i U :\iff yS_w U, \ \mathcal{D}_i \subseteq \bigcup\dot{R}[y], \ U \subseteq 
	  		\left[\bigvee_{G\in\mathcal{D}_i} \square \neg G \right]_w.
		\]
It can be shown that whenever $yS_w U$, we also have:
\begin{equation}
	\label{sv1}
	(\exists U' \subseteq U)(\exists i < n) \ y S_w^{i} U'.
\end{equation}

Let $m < n$ be maximal such that there are $U \in \mathcal{V}$ and $U' \subseteq U$ with 
the following properties:
\begin{enumerate}
		\item[(i)] $(\forall x \in U)[ (\exists y \in R[x]) (\exists Z \subseteq U) 
		(\exists i \leq m)\ yS_w^{i} Z \Rightarrow x \notin U' ]$;
		\item[(ii)] $(\forall x \in W)(x S_w U \Rightarrow  xS_w U')$.
\end{enumerate}
		
Since $\mathcal{D}_0=\emptyset,$ we have
$[\bigvee_{G\in\mathcal{D}_0} \square\neg G]_w=[\bot]_w=\emptyset.$
So there are no $Z\subseteq [\bigvee_{G\in\mathcal{D}_0}\square\neg G]_w$ such that $yS_w Z$
for some $y\in W$. So, if we take $m = 0$ and $U' = U$ for any $U \in \mathcal{V}$, (i) and (ii) are 
trivially satisfied. 

Since $n$ is finite and conditions (i) and (ii) are satisfied for at least one value $m$, there must be a 
maximal $m < n$ with the required properties.

Finally, it can be shown that, contrary to the assumption of the maximality of $m$, $m+1$ also satisfies properties (i) and (ii). 
We omit details since the full proof is somewhat cumbersome.
For details please refer to \cite{Mikec-Vukovic-20}.
\end{proof}

Goris and Joosten proved in \cite{Goris-Joosten-08} the completeness of \principle{W}$^*$ (recall that this is equivalent to \extil{WM_0}) w.r.t.\ ordinary  Veltman semantics. This theorem has its analogue in GVS.

\begin{theorem}
\label{ilwst-potpunost}
The logic \extil{W^*} is complete w.r.t.\ \ilgen{W} $^*$-models.
\end{theorem}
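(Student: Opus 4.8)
The plan is to run exactly the scheme used for \extil{W} in Theorem \ref{tm-ilw}, but now taking \extil{W^*}-MCS's as worlds, and to supplement the verification of \kgen{W} with a verification of the \principle{M_0} frame condition. Since \extil{W^*} = \extil{WM_0}, a generalised Veltman frame is an \ilgen{W}$^*$-frame exactly when it satisfies both \kgen{W} and \kgen{M$_0$}: by soundness, a frame validating both conditions validates \principle{W} and \principle{M_0}, hence every theorem of \extil{WM_0} = \extil{W^*}, and conversely a frame validating \principle{W^*} validates the theorems \principle{W} and \principle{M_0}. The key observation enabling reuse is that every \extil{W^*}-MCS is simultaneously a \extil{W}-MCS and an \extil{M_0}-MCS, so all the labelling lemmas we need — Lemmas \ref{problemi-W} and \ref{nedostaci-W}, stated for arbitrary \extil{WX}, and the \principle{M_0}-labelling Lemma \ref{label-mn} — apply verbatim to our worlds. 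First I would fix a set $\mathcal D$ with the usual closure properties, form the \extil{W^*}-structure $\mathfrak M$ of Definition \ref{def-ilw-struktura}, and invoke Lemma \ref{lemma-main-w}, which already covers $\textsf{X} = \textsf{W}^*$ and gives both that $\mathfrak M$ is a generalised Veltman model and the truth lemma. As in the proof of Theorem \ref{tm-ilw}, this reduces completeness to showing that $\mathfrak M$ possesses both \kgen{W} and \kgen{M$_0$}.

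For the \kgen{W} half I would reuse the argument of Theorem \ref{tm-ilw} essentially unchanged. That proof — the selection of a maximal index $m < n$ and the auxiliary relations $S_w^i$ — only uses that the ambient logic extends \extil{W} and that Lemmas \ref{problemi-W} and \ref{nedostaci-W} hold, both of which remain true for \extil{W^*}. Hence no new idea is needed here; one merely checks that no step exploits a feature distinguishing \extil{W} from \extil{W^*}.

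The genuinely new content is the \kgen{M$_0$} half, which I would prove directly against the two-case definition of $S_w$ in Definition \ref{def-ilw-struktura}. Suppose $wRuRxS_wV$; I must produce $V' \subseteq V$ with $uS_wV'$ and $R[V'] \subseteq R[u]$. If $xS_wV$ holds via clause (a), i.e. $V \cap \dot{R}[x] \neq \emptyset$, pick $v_0 \in V \cap \dot{R}[x]$; since $uRx$ and $R$ is transitive, $v_0 \in \dot{R}[u]$ and $R[v_0] \subseteq R[u]$, so $V' := \{ v_0 \}$ works via clause (a). If instead $xS_wV$ holds via clause (b), set $V' := \{ v \in V : w \prec_{u_\emptyset^\square} v \}$; then $R[V'] \subseteq R[u]$, because $w \prec_{u_\emptyset^\square} v$ forces every $\square B \in u$ into $v$, whence $B,\square B$ lie in every $R$-successor of $v$, giving $R[v] \subseteq R[u]$. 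To see $uS_wV'$ via clause (b), take any $S$ with $w \prec_S u$; since $u \prec x$, Lemma \ref{label-mn} yields $w \prec_{S \cup u_\emptyset^\square} x$, and clause (b) for $x,V$ then produces $v \in V$ and $G \in \mathcal D \cap \bigcup \dot{R}[x]$ with $w \prec_{S \cup u_\emptyset^\square \cup \{ \square \neg G \}} v$. Monotonicity of $\prec$ (Lemma \ref{lema3.2}) puts $v \in V'$ and keeps $w \prec_{S \cup \{ \square \neg G\}} v$, while $\dot{R}[x] \subseteq R[u]$ gives $G \in \mathcal D \cap \bigcup \dot{R}[u]$; taking $S = \emptyset$ simultaneously shows $V' \neq \emptyset$. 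This establishes clause (b) for $u,V'$, completing \kgen{M$_0$}.

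The main obstacle, as usual in this setting, is the \kgen{W} verification rather than the \kgen{M$_0$} one: its maximal-$m$ argument is by far the most technical ingredient, and the real risk is that some step tacitly relies on a property special to \extil{W}. The subtler point to watch is compatibility — both frame conditions must hold for the \emph{same} relations $S_w$ dictated by Definition \ref{def-ilw-struktura}, so one cannot tune $S_w$ to make either condition easy. Once it is confirmed that the \extil{W} machinery transfers unchanged to \extil{W^*}-MCS's, the \kgen{M$_0$} argument above finishes the proof, and it is exactly this clean coexistence of the two conditions that makes Definition \ref{def-ilw-struktura} ``good enough for \extil{W^*}'', as the surrounding text anticipates.
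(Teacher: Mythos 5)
Your proposal is correct and follows essentially the same route as the paper: reduce via Lemma \ref{lemma-main-w} to checking that the \extil{W^*}-structure possesses \kgen{W} and \kgen{M$_0$}, obtain \kgen{W} by observing that the argument of Theorem \ref{tm-ilw} uses only lemmas valid for any extension of \extil{W}, and verify \kgen{M$_0$} by a labelling argument based on Lemma \ref{label-mn}. The paper defers the \kgen{M$_0$} verification to \cite{Mikec-Vukovic-20} (citing its similarity to the \extil{M_0} case), and your explicit two-case check against Definition \ref{def-ilw-struktura} supplies exactly that omitted detail correctly.
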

\begin{proof}
From Lemma \ref{lemma-main-w}, it suffices to prove that the \extil{W^*}-structure for $\mathcal{D}$
possesses the properties \kgen{W} and \kgen{M$_0$}, for each appropriate    $\mathcal{D}.$
So, let $\mathfrak{M}=\la W,R,\{ S_w: w\in W\},V\ra$ %\joost{Uniform notation? Also in the brackets used?}
be the \extil{W^*}-structure for $\mathcal D$.
Theorem \ref{tm-ilw} shows that the model $\mathfrak{M}$ possesses the property \kgen{W}.
It remains to show that it possesses the property \kgen{M$_0$}.

The remainder of this proof is very similar to the proof of Theorem \ref{th-ilmn-complete}. Please refer to \cite{Mikec-Vukovic-20} for details.
\end{proof}

In \cite{Mikec-Perkov-Vukovic-17} it is shown that \extil{W^*} possesses the finite model property w.r.t.\ generalised 
Veltman models. To show decidability, (stronger) completeness w.r.t.\ ordinary Veltman models was used in \cite{Mikec-Perkov-Vukovic-17}. However, we observe that 
Theorem \ref{ilwst-potpunost} above suffices for the mere purpose of decidability.

\subsection{The logic \extil{WR}}

In previous subsections we saw that the completeness of \extil{R} can be proven
    using \extil{R}-structures, and that the completeness of \extil{W}
    can be proven using \extil{W}-structures.
These two types of structures are defined differently.
However, we saw that \extil{W^*}-structures have the same general form 
    as \extil{W}-structures.
So, one may hope to prove completeness of \extil{WR} with the help
    of a structure defined similarly to \extil{W}-structures.
    
Unfortunately, it seems that \extil{WR}-structures, if by an
    \extil{WR}-structure we mean an \extil{W}-structure
    with the notion of \extil{W}-consistency replaced with that of
    \extil{WR}-consistency,
    does not posses the characteristic property \kgen{R}.
In \cite{goris2020assuring} we call the type of a problem that emerges
    here ``the label iteration problem''.
In the same paper we demonstrate how to overcome this problem
    for a simpler logic.
With \extil{WR} we have some progress, 
    but are not yet sure if we can really solve it.

\subsection{Logics below \il}
% A very, very short summary of Kurahashi's new preprint. Not more than one or two paragraphs mentioning some results.

Just as this chapter was being prepared, a preprint written by Kurahashi and Okawa
appeared (\cite{kurahashi2020modal}), using GVS to prove completeness and decidability of certain subsystems of \extil{}.
The authors define a new logic, \extil{^-},  similarly to \extil{}, but without axiom (schema)s \textsf{J1}, \textsf{J2}, \textsf{J4} and \textsf{J5}. 
However, they add new rules to the system: \begin{center}
    \textsf{R1}: if $\vdash A \to B$ then $\vdash C \rhd A \to C \rhd B$;\\
    \textsf{R2}: if $\vdash A \to B$ then $\vdash B \rhd C \to A \rhd C$.
\end{center} 
These new rules can be seen as approximating \textsf{J1} and \textsf{J2}. The authors also require $\Box A \leftrightarrow \neg A \rhd \bot$ to hold by definition.

The paper proceeds to study twenty logics between \extil{^-} and \extil{}.
Twelve of these are proven to be complete with respect to a version of ordinary Veltman semantics.
The remaining eight are incomplete w.r.t.\ such semantics, and complete with respect to a version of GVS. 
Their style of proof is similar to \cite{JonghVeltman:1990:ProvabilityLogicsForRelativeInterpretability} and \cite{Mikec-Vukovic-20}. The authors define their whole models all at once (in a non-iterative construction), and the general structure of the definitions of relations $S_w$ is, roughly, ``$uS_wV$ if whenever there is a label $\Sigma$ between $w$ and $u$, there should be a corresponding world $v \in V$ with the same label $\Sigma$ between $w$ and $v$''.

The results from  \cite{kurahashi2020modal} on subsystems of \il lend support to the conviction that Generalised semantics is robust and widely applicable. See also Remark 4 in Section 1.3 of \cite{LitakVisserInternal:2020:ForDick}.

\section{Bisimulations and filtrations}

In this section we introduce and establish basic properties of bisimulations between generalised 
Veltman models. Next, we use bisimilarity to define equivalence classes when we employ 
the method of filtrations, which in turn we use to prove finite model property and decidability of 
various logics.

\subsection{Bisimulations}

Visser \cite{Visser:1990:InterpretabilityLogic} defined the notion of a bisimulation between 
Veltman models. Vrgo\v c and Vukovi\'c \cite{Vrgoc-Vukovic} extended this definition to 
generalised Veltman models.

\begin{definition}
A bisimulation between generalised Veltman models\\
$\mathfrak{M}=\la W,R,\{S_w:w\in W\},V\ra$ 
and $\mathfrak{M}'=\la W',R',\{S'_{w'}:w'\in W'\},V'\ra$ is a non-empty
relation $Z\subseteq W\times W'$ such that:
\begin{quote}
\begin{itemize}
\item[(at)] if $wZw'$, then $\mathfrak{M},w\Vdash p$ if and only if $\mathfrak{M}',w'\Vdash' p$, 
for all propositional variables $p;$

\item[(forth)] if $wZw'$ and $wRu$, then there is $u'\in W'$ such that $w'R'u'$, $uZu'$ and for 
all $V'\subseteq W'$ such that $u'S'_{w'}V'$ there is $V\subseteq W$ such that $uS_wV$ and for
all $v\in V$ there is $v'\in V'$ with $vZv';$ 

\item[(back)] if $wZw'$ and $w'R'u'$, then there is $u\in W$ such that $wRu$, $uZu'$ and for all 
$V\subseteq W$ such that $uS_wV$ there is $V'\subseteq W'$ such that $u'S'_{w'}V'$ and for
all $v'\in V'$ there is $v\in V$ with $vZv'.$ 
\end{itemize}
\end{quote}

We say that $w\in W$ and $w'\in W'$ are \textit{bisimilar} if there is a bisimulation 
$Z\subseteq W\times W'$ such that $wZw'.$
\end{definition}

The following lemma is proved by Vrgo\v{c} and Vukovi\'{c} in \cite{Vrgoc-Vukovic}.

\begin{lemma}
Let $\mathfrak M$, $\mathfrak{M}'$ and $\mathfrak{M}''$ be generalised Veltman models.

\begin{itemize}
\item[a)] If $w\in W$ and $w'\in W'$ are bisimilar, then they are modally equivalent, i.e.\ they 
satisfy the same formulas in the language of \extil{}.

\item[b)] The identity $\{(w,w):w\in W\}\subseteq W\times W$ is a bisimulation.

\item[c)] The inverse of a bisimulation between $\mathfrak{M}$ and $\mathfrak{M}'$ is a 
bisimulation between $\mathfrak{M}'$ and $\mathfrak{M}.$

\item[d)] The composition of bisimulations $Z\subseteq W\times W'$ and 
$Z'\subseteq W'\times W''$ is a bisimulation between $\mathfrak{M}$ and $\mathfrak{M}''.$

\item[e)] The union of a family of bisimulations between $\mathfrak{M}$ and $\mathfrak{M}'$ is 
also a bisimulation between $\mathfrak{M}$ and $\mathfrak{M}'$. 
Thus there exists the largest bisimulation between models $\mathfrak{M}$ and $\mathfrak{M}'.$
\end{itemize}
\label{lema-bismulacije}
\end{lemma}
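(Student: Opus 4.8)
The plan is to prove (b)–(e) by directly verifying the three clauses (at), (forth) and (back) of the definition, and to prove (a) by induction on the complexity of formulas. Parts (b)–(e) are essentially bookkeeping, so the real work sits in the $\rhd$-clause of (a) and in the chaining needed for the composition; I will be most careful in those two places.

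For (a) I would show by induction on $\varphi$ that $wZw'$ implies $\mathfrak M,w\Vdash\varphi \Leftrightarrow \mathfrak M',w'\Vdash'\varphi$. The atomic case is exactly clause (at), the Boolean cases are immediate from the induction hypothesis, and the $\Box$-case is the standard Kripke argument using the $R$-parts of (forth)/(back). The substantial case is $\varphi=A\rhd B$. For the left-to-right direction, assume $w\Vdash A\rhd B$ and take any $u'$ with $w'R'u'$ and $u'\Vdash' A$; apply (back) to $wZw'$ and $w'R'u'$ to obtain $u$ with $wRu$, $uZu'$ and the accompanying set-matching property. The induction hypothesis for $A$ (using $uZu'$) gives $u\Vdash A$, so from $w\Vdash A\rhd B$ there is $V$ with $uS_wV$ and $V\Vdash B$. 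The set-matching property of (back) then yields $V'$ with $u'S'_{w'}V'$ such that every $v'\in V'$ is $Z$-related to some $v\in V$; since each such $v\Vdash B$, the induction hypothesis for $B$ gives $v'\Vdash' B$, whence $V'\Vdash' B$ and $w'\Vdash' A\rhd B$. The converse direction is completely symmetric, using (forth) in place of (back).

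For (b), on the identity relation $Z=\{(w,w):w\in W\}$ clause (at) is trivial; for (forth) (and dually (back)), given $wRu$ I take $u'=u$, and for any $V'$ with $uS_wV'$ I simply take $V=V'$, matching each $v\in V$ with itself. For (c), writing $Z^{-1}=\{(w',w):wZw'\}$, clause (at) is symmetric and the clauses (forth) and (back) for $Z^{-1}$ are literally (back) and (forth) for $Z$, while non-emptiness is inherited from $Z$. For (e), given a family $\{Z_i\}$ and its union $Z=\bigcup_i Z_i$, any pair in $Z$ lies in some $Z_i$, and the witnesses supplied by (at)/(forth)/(back) for $Z_i$ also serve for $Z$ because $Z_i\subseteq Z$; taking $Z$ to be the union of \emph{all} bisimulations between the two models then gives the largest one.

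The step I expect to be most delicate is (d). Given bisimulations $Z\subseteq W\times W'$ and $Z'\subseteq W'\times W''$ with composition $Z\circ Z'$, verifying (forth) requires chaining the two set-matching conditions in the correct order. Starting from a set $V''$ with $u''S''_{w''}V''$, I first use the matching coming from $Z'$ to produce an intermediate $V'$ with $u'S'_{w'}V'$ mapped pointwise into $V''$, and only then use the matching coming from $Z$ to produce $V$ with $uS_wV$ mapped pointwise into $V'$; composing the two pointwise matchings assigns to each $v\in V$ a witness $v''\in V''$ with $v\,(Z\circ Z')\,v''$. Keeping track of the direction of each matching and of which model each intermediate set inhabits is the only genuinely error-prone part; clause (back) is handled symmetrically and clause (at) is immediate, so that $Z\circ Z'$ is again a bisimulation.
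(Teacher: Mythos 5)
Your proof is correct and follows the standard route: the paper itself supplies no proof of this lemma (it is quoted from Vrgo\v{c} and Vukovi\'c), but your clause-by-clause verification of (at)/(forth)/(back) for (b)--(e) and the induction on formulas for (a) --- with (back) driving the left-to-right direction and (forth) the converse in the $\rhd$-case, and the two set-matchings for (d) chained in exactly the order you describe (first $Z'$ to pull $V''$ back to an intermediate $V'$, then $Z$ to pull $V'$ back to $V$) --- is precisely the intended argument. The only caveat is a boundary case inherited from the statement itself rather than a flaw in your reasoning: since bisimulations are by definition non-empty, the composition in (d) is a bisimulation only when it happens to be non-empty (the composition of two non-empty relations can be empty), and likewise the ``largest bisimulation'' in (e) exists only when at least one bisimulation exists.
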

\noindent

The previous lemma shows that this notion of bisimulation has certain desired properties. 
A property that significantly contributes to whether the notion of bisimulation can be considered 
well-behaved is the Hennessy--Milner property. We say that a generalised Veltman model 
$\mathfrak{M}=\la W,R;\{ S_w :w\in W\},V\ra$ is \textit{image finite} if the set 
$R[w]$ is finite, for all $w\in W$.
The following theorem is proved in 
\cite{Vrgoc-Vukovic}.

\begin{theorem}[Hennessy-Milner\footnote{The theorem for unary modal logic was already known to and published by van Benthem \cite{van1984correspondence}.} property] 
Let $\mathfrak{M}=\la W,R,\{S_w:w\in W\},V\ra$ and 
$\mathfrak{M}'=\la W',R',\{ S_w' : w\in W'\},V' \ra$ be two image finite generalised Veltman 
models. If $w\in W$ and $w'\in W'$ are modally equivalent, then there exists a bisimulation $Z$ 
such that $w Z w'$.
\end{theorem}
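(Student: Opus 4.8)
The plan is to take $Z$ to be the relation of modal equivalence itself, $Z := \{(v,v') \in W \times W' : v \text{ and } v' \text{ satisfy the same formulas of } \extil{}\}$, and to verify that $Z$ is a bisimulation; since $wZw'$ holds by hypothesis, this suffices. The clause (at) is immediate from the definition of $Z$. Before attacking (forth) and (back) it is worth recording a consequence of the frame conditions: because $R$ is transitive and converse well-founded and each $R[v]$ is finite, the sub-model generated above any world is finite (a finitely branching tree with no infinite ascending $R$-chains). Hence, working inside these finite sub-models, any two non-equivalent worlds are separated by some formula, and one can assemble, for each modal-equivalence class $c$ occurring above $w$ or $w'$, a characteristic formula $\chi_c$ true exactly at the worlds of class $c$. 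These $\chi_c$ are the basic tool for transferring information between $\mathfrak{M}$ and $\mathfrak{M}'$.

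For (forth), assume $wZw'$ and $wRu$. The first task is to produce an $R'$-successor $u'$ of $w'$ with $u \equiv u'$. This is the usual argument: if no such $u'$ existed, then for each of the finitely many $t' \in R'[w']$ we could pick $\phi_{t'}$ with $u \Vdash \phi_{t'}$ and $t' \nVdash \phi_{t'}$, whence $w \Vdash \Diamond \bigwedge_{t'} \phi_{t'}$ while $w' \nVdash \Diamond \bigwedge_{t'}\phi_{t'}$, contradicting $w \equiv w'$. The second, and genuinely harder, task is the $S$-clause: having fixed (the right) $u'$, show that for every $V'$ with $u' S'_{w'} V'$ there is $V$ with $uS_w V$ each of whose elements is $Z$-related to some element of $V'$. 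Here I would encode the target by the formula $B_{V'} := \bigvee_{v' \in V'} \chi_{[v']}$, so that ``dominated by $V'$'' means ``forces $B_{V'}$'', argue that a suitable $\rhd$-statement built from $\chi_{[u]}$ and $B_{V'}$ has the same truth value at $w$ and $w'$, extract from its truth at $w$ an $S_w$-successor of $u$ contained in the set of $B_{V'}$-worlds, and finally invoke monotonicity (frame condition (e)) to enlarge this successor to the canonical set $V := \{v \in R[w] : v \Vdash B_{V'}\}$, which is by construction dominated by $V'$. The clause (back) is handled symmetrically, exchanging the roles of $\mathfrak{M}$ and $\mathfrak{M}'$ and using finiteness of $R[w]$.

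The main obstacle is exactly this $S$-clause, and in particular the selection of the correct $u'$. The difficulty is that several $R'$-successors of $w'$ may all be modally equivalent to $u$ yet be $S'_{w'}$-related to modally very different sets of worlds, and the characteristic formula $\chi_{[u]}$ cannot tell these copies apart; moreover the $\rhd$-modality only ever yields a universally quantified (``for all antecedent worlds'') statement, whereas the $S$-clause concerns the behaviour of a single chosen successor. Reconciling these two features is where the weight of the argument lies: one must exploit image-finiteness to reduce to finitely many candidate sets $V'$, combine the corresponding formulas $B_{V'}$ carefully, and use monotonicity to pass from the mere existence of a witnessing $S_w$-successor to the full canonical set. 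I expect the bookkeeping that guarantees a single $u'$ works simultaneously for all of its $S'_{w'}$-successors to be the most delicate point, and the place where the finiteness hypothesis is used essentially.
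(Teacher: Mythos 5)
The paper does not actually prove this theorem --- it only cites Vrgo\v{c}--Vukovi\'{c} --- so your argument has to stand on its own. Taking $Z$ to be modal equivalence is the right (indeed, essentially forced) choice, the clause (at) is immediate, the standard image-finiteness argument producing \emph{some} $u'\in R'[w']$ modally equivalent to $u$ is correct, and so is the construction of characteristic formulas $\chi_c$ for the finitely many equivalence classes met in $R[w]\cup R'[w']$. But the step you defer as ``delicate bookkeeping'' is the entire mathematical content of the theorem, and the transfer you describe does not go through. The only $\rhd$-statement the language lets you build from $\chi_{[u]}$ and $B_{V'}$ is, up to propositional manipulation, $\chi_{[u]}\rhd B_{V'}$, and this formula need not be true at $w'$ even though $u'S'_{w'}V'$: the modality $\rhd$ quantifies universally over \emph{all} $R'$-successors of $w'$ in the class $[u]$, and some other $u''\equiv u$ may have no $S'_{w'}$-successor inside $\llbracket B_{V'}\rrbracket$. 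So there is no true formula at $w'$ to transfer to $w$, and you obtain no $S_w$-successor of $u$ inside $\llbracket B_{V'}\rrbracket$.

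That this is not repairable by aggregating over the finitely many candidate sets can be seen on a concrete configuration. Let $w$ have $R$-successors $u,a,b$ with $a\Vdash p$, $b\Vdash q$, $u$ blank and no further $R$-arrows, and let the minimal $S_w$-images of $u$ be $\{u\}$ and $\{a,b\}$; let $w'$ have $R'$-successors $u_1',u_2',a',b'$ with the minimal $S'_{w'}$-images of $u_1'$ being $\{u_1'\}$ and $\{a'\}$, and those of $u_2'$ being $\{u_2'\}$ and $\{b'\}$ (all closed under the frame conditions). Writing $U,A,B$ for the truth values of $G$ on the three classes, the truth condition of $\chi_{[u]}\rhd G$ at $w$ is $U\vee(A\wedge B)$ and at $w'$ it is $(U\vee A)\wedge(U\vee B)$; these coincide, and one checks that $w$ and $w'$ are modally equivalent. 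Yet neither $u_1'$ nor $u_2'$ can serve as the $u'$ demanded by (forth) for the transition $wRu$, since the image $\{a'\}$ (respectively $\{b'\}$) has no matching $S_w$-image of $u$. So the formula-transfer strategy cannot succeed as sketched; a correct proof must rely on the precise hypotheses and matching conditions of the cited source (compare the ``complete image finite'' models of \cite{Vukovic08}) rather than on characteristic formulas and monotonicity alone.
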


In \cite{Vrgoc-Vukovic} several other notions of a bisimulation (V-bisimulation, strong and 
global bisimulation) are considered and the connections between them are explored. 

In \cite{Verbrugge} and \cite{Vukovic08}, connections between Veltman semantics and generalised 
Veltman semantics are considered. 
In \cite{Vukovic08} it is shown that for a restricted class of generalised Veltman models 
$\mathfrak{M}$ (the so-called \textit{complete image finite} models) there exists an ordinary 
Veltman model $\mathfrak{M}'$ that is bisimilar to $\mathfrak{M}$.

In the section concerning completeness, we commented on why in general there cannot 
exist an ordinary \extil{P_0}-model that is bisimilar to a given \ilgen{P$_0$}-model.
That example shows that for at least some cases ordinary Veltman semantics is not expressive 
enough to capture the behaviour of interpretability logics.

In \cite{Perkov-Vukovic-16} the notion of $n$-bisimulation is defined similar to various existing notions for other modal logics. Then, $n$-bisimulations are used in the proof of the finite model property of various systems w.r.t.\ GVS.

\begin{definition}
An $n$-\textit{bisimulation} between generalised Veltman models 
$\mathfrak{M}=\la W,R,\{S_w:w\in W\},V\ra$ and 
$\mathfrak{M}'=\la W',R',\{S'_{w'}:w'\in W'\},V'\ra$ is a decreasing
sequence of relations $Z_n\subseteq Z_{n-1}\subseteq\dots\subseteq Z_1\subseteq Z_0\subseteq 
W\times W'$ such that:
\begin{quote}
\begin{itemize}
\item[(at)] if $wZ_0w'$ then $\mathfrak{M},w\Vdash p$ if and only if 
$\mathfrak{M}',w'\Vdash' p$, for all propositional variables $p;$

\item[(forth)] if $0<i\leqslant n$, $wZ_iw'$ and $wRu$, then there exists $u'\in R'[w']$ 
such that $uZ_{i-1}u'$ and for all $V'\in S'_{w'}[u']$ there is
$V\in S_w[v]$ such that for all $v\in V$ there is $v'\in V'$ with $vZ_{i-1}v';$

\item[(back)] if $0<i\leqslant n$, $wZ_iw'$ and $w'R'u'$, then there exists $u\in R[w]$ 
such that $uZ_{i-1}u'$ and for all $V\in S_w[u]$ there is $V'\in S'_{w'}[u']$ such that for all 
$v'\in V'$ there is $v\in V$ with  $vZ_{i-1}v'.$
\end{itemize}
\end{quote}
We say that $w\in W$ and $w'\in W'$ are $n$-\textit{bisimilar} if there is an $n$-bisimulation 
between $\mathfrak{M}$ and $\mathfrak{M}'$ such that $wZ_nw'$. 
\end{definition}

We say that $w$ and $w'$ are $n$-\textit{modally equivalent} and we write $w\equiv_nw'$ if $w$ 
and $w'$ satisfy exactly the same formulas of the modal depth (that is, the maximal number of 
nested modalities) up to $n$. The following lemma is proved in \cite{Perkov-Vukovic-16}.

\begin{lemma}
Let $\mathfrak{M}=\la W,R,\{ S_w w\in W\}, V\ra$ and 
$\mathfrak{M}'=\la W',R', \{ S_w' : w\in W'\}, V'\ra$ be 
generalised Veltman models. Let $w\in W$ and $w'\in W'$. We have:
\begin{enumerate}
\item if $w$ and $w'$ are $n$-bisimilar, then $w$ and $w'$ are $n$-modally equivalent;

\item if there are only finitely many propositional variables, then the converse also holds: 
if $w$ and $w'$ are $n$-modally equivalent, then $w$ and $w'$ are $n$-bisimilar.
\end{enumerate}
\label{lm:nbis}
\end{lemma}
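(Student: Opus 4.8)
The plan is to prove the two implications separately, in each case by an induction that ties the index $i$ of the relations $Z_i$ to the modal depth of formulas. Throughout, let $\mathrm{md}(A)$ denote the modal depth of $A$, with $\mathrm{md}(\Box A)=\mathrm{md}(A)+1$ and $\mathrm{md}(A\rhd B)=\max(\mathrm{md}(A),\mathrm{md}(B))+1$, so that the formulas compared in $w\equiv_n w'$ are exactly those of depth at most $n$.

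For part (1) I would prove, by induction on $i$, that $wZ_iw'$ implies that $w$ and $w'$ agree on every formula of modal depth at most $i$; taking $i=n$ gives $n$-modal equivalence. The base case $i=0$ uses only (at) together with the Boolean clauses of the forcing relation, and for the step one fixes $wZ_iw'$ and argues by a secondary induction on a formula $A$ with $\mathrm{md}(A)\le i$. The propositional and Boolean cases are immediate (using $Z_i\subseteq Z_0$). For $\Box B$ with $\mathrm{md}(B)\le i-1$, to transfer $w\Vdash\Box B$ one takes any $u'$ with $w'R'u'$, applies (back) to get $u$ with $wRu$ and $uZ_{i-1}u'$, and concludes $u'\Vdash' B$ from $u\Vdash B$ by the induction hypothesis at $i-1$; the converse uses (forth). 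The genuinely modal case $B\rhd C$ (with $\mathrm{md}(B),\mathrm{md}(C)\le i-1$) is where the set-matching built into (forth)/(back) is used: assuming $w\Vdash B\rhd C$ and given an $R'$-successor $u'$ of $w'$ with $u'\Vdash' B$, (back) yields $u$ with $uZ_{i-1}u'$ (hence $u\Vdash B$), and the witness $V$ with $uS_wV$, $V\Vdash C$ is transported by the inner clause of (back) to a set $V'$ with $u'S'_{w'}V'$ each of whose points is $Z_{i-1}$-related to a point of $V$, whence $V'\Vdash' C$ by the induction hypothesis. The opposite implication is symmetric via (forth).

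For part (2) the hypothesis of finitely many propositional variables is essential: by induction on $k$ one checks that there are, up to logical equivalence, only finitely many formulas of modal depth at most $k$ (at each level one adds finitely many formulas $\Box B$ and $B\rhd C$ built from the lower ones, then closes under the resulting finitely many Boolean combinations). Hence every world $x$ admits a characteristic formula $\chi_x^{k}$ of depth at most $k$ with $y\Vdash\chi_x^{k}$ iff $y\equiv_k x$. I would then put $Z_i:=\{(x,x'):x\equiv_i x'\}$; this is a decreasing chain with $wZ_nw'$ by hypothesis, and (at) is immediate. For the first half of (forth), given $wRu$ with $w\equiv_iw'$, the formula $\Diamond\chi_u^{i-1}$ has depth at most $i$ and is forced at $w$, hence at $w'$, yielding $u'$ with $w'R'u'$ and $u'\equiv_{i-1}u$, i.e. $uZ_{i-1}u'$.

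The main obstacle is the inner set-matching clause of (forth)/(back): arranging that the chosen $u'$ has its entire $S'_{w'}$-behaviour dominated, modulo $\equiv_{i-1}$, by that of $u$. The idea is to read this off from the $\rhd$-formulas, which are preserved since $w\equiv_iw'$. For a set $V'$ with $u'S'_{w'}V'$ one forms the finite disjunction $\Theta_{V'}:=\bigvee_{v'\in V'}\chi_{v'}^{i-1}$ (well defined because there are finitely many $(i-1)$-types); then $\mathrm{md}(\Theta_{V'})\le i-1$ and $v\Vdash\Theta_{V'}$ holds exactly when $v$ is $(i-1)$-equivalent to some point of $V'$, so a witness $V$ with $uS_wV$ and $V\Vdash\Theta_{V'}$ is precisely what the clause demands. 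Turning this into a proof requires showing $w\Vdash\chi_u^{i-1}\rhd\Theta_{V'}$, and here care is needed, since a single $\rhd$-formula at $w$ records only the worst case over all $R$-successors of a given $(i-1)$-type: one must select $u'$ to have a $\subseteq$-minimal achievement profile among the $R'$-accessible $(i-1)$-equivalents of $u$, and exploit that the agreement of $w$ and $w'$ on all disjunctive targets $\Theta_T$ pins these profiles down. I expect this bookkeeping — reconciling the existential, set-valued nature of $S_w$ with the fact that $\rhd$ only distinguishes type-classes — to be the crux; (back) is handled symmetrically, and combining the verified (at), (forth), (back) with part (1) yields $n$-bisimilarity.
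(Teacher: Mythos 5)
Your part (1) is fine: the induction that ties $Z_i$ to modal depth $i$, with the $\rhd$-case handled by the outer clause of (back)/(forth) plus the inner set-matching clause, is exactly the standard argument (the paper itself gives no proof of Lemma \ref{lm:nbis}, only a citation to \cite{Perkov-Vukovic-16}, so there is nothing to compare against there). Part (2), however, has a genuine gap, and it sits precisely where you say you ``expect the bookkeeping to be the crux'': the inner set-matching clause of (forth). The repair you sketch --- choosing $u'$ with a $\subseteq$-minimal achievement profile and using the disjunctions $\Theta_{V'}$ --- cannot close it, because a formula $A \rhd B$ evaluated at $w$ only records, for each $(i-1)$-type of $R$-successor, the \emph{intersection} over all successors of that type of the (upward-closed) families of type-sets reachable via $S_w$, whereas the (forth) clause demands a per-world containment of profiles. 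These two pieces of information can come apart.

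Concretely, take $R[w]=\{u,a_1,a_2\}$ with all three worlds $R$-maximal, $u\Vdash q$, $a_i\Vdash p_i$, and $S_w[u]$ the monotone closure of $\{\{u\},\{a_1,a_2\}\}$; and take $R'[w']=\{t'_1,t'_2,a'_1,a'_2\}$, again all $R'$-maximal, $t'_i\Vdash q$, $a'_i\Vdash p_i$, with $S'_{w'}[t'_i]$ the monotone closure of $\{\{t'_i\},\{a'_i\}\}$ (the remaining $S$-relations are forced by quasi-reflexivity and monotonicity). One checks that both structures satisfy conditions (a)--(e) with the quasi-transitivity (2) fixed in this paper, and that for every $A\rhd B$ the truth conditions at $w$ and at $w'$ reduce to the same Boolean condition on the classes $[q],[p_1],[p_2]$ (``$[q]\in\delta(B)$ or both $[p_i]\in\delta(B)$''), since $t'_1$ and $t'_2$ jointly simulate $u$; hence $w$ and $w'$ are $n$-modally equivalent for every $n$. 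Yet no $n$-bisimulation with $n\geq 1$ relates them: by (at) any admissible $u'$ for $wRu$ must be $t'_1$ or $t'_2$, and $t'_i$ has the $S'_{w'}$-successor $\{a'_i\}$ while $u$ has no $S_w$-successor set consisting only of $p_i$-worlds, so the inner clause of (forth) fails. Note that the example dissolves under quasi-transitivity condition (6) of Table \ref{tab:transitivity}, where $uS_w\{a_1,a_2\}$ and $a_1S_w\{a_1\}$ would force $uS_w\{a_1\}$ and thereby destroy the modal equivalence. So the step you defer is not bookkeeping: a correct proof of part (2) must exploit structural consequences of the particular closure conditions on $S_w$ that your formula-based argument never invokes, and as written the proposal does not establish the claim.
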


%%%%%%%%%%%%%%%%%%%%%%%%%%%%%%%%%%%%%%%%%%%%%%%%%%%%%%%%%%%%%%%%%%%%%%%%%%%%%%%%%%%%%%%%%%%%%%%%%
\subsection{Filtrations and the finite model property}
The filtration method is often used to prove that a modal logic possesses the finite model 
property.
Perkov and Vukovi\'c \cite{Perkov-Vukovic-16}  applied this technique to GVS. 
Here bisimilarity is used to refine models in order to preserve the structural properties of 
generalised Veltman models.

Filtration usually employs partitions whose clusters contain logically equivalent worlds. The 
equivalence need not be with respect to all formulas; usually a finite set of formulas closed 
under taking subformulas suffices. 
In our case, some additional properties of this set of formulas are required. 
Let $A$ be a formula. If $A$ is not a negation, then ${\sim}A$ denotes $\neg A$, and otherwise,
if $A$ is $\neg B$, then ${\sim}A$ is $B$. 
It is convenient to take $\rhd$ to be the only modality in our language and to define $\Box$ 
and $\Diamond$ as abbreviations: $\Diamond A$ as $\neg(A\rhd\bot)$ and
$\Box A$ as ${\sim}\Diamond{\sim}A$, i.e.\ ${\sim}A\rhd\bot$.
We will give the definition of adequate sets used in \cite{Mikec-Vukovic-20}.
It is an extended version of the definition used in \cite{Perkov-Vukovic-16}, where filtrations 
of generalised Veltman models were originally introduced. 
The extended definition turns out to be important for some logics.

\begin{definition}
Let $\mathcal{D}$ be a finite set of formulas that is closed under taking 
subformulas and single negations $\sim$, and $\top\in \mathcal{D}.$
We say that a set of formulas $\Gamma_\mathcal{D}$ is \emph{an adequate set} (w.r.t.\  $\mathcal{D}$) if 
it satisfies the following conditions:
\begin{itemize}
\item[a)] $\Gamma_\mathcal{D}$ is closed under taking subformulas;
\item[b)] if $A\in\Gamma_\mathcal{D}$ then ${\sim}A\in\Gamma_\mathcal{D};$
\item[c)] $\bot\rhd\bot\in\Gamma_\mathcal{D};$
\item[d)] $A\rhd B\in\Gamma_\mathcal{D}$ if $A$ is an antecedent or succedent of some $\rhd$-formula 
             in $\Gamma_\mathcal{D},$ and so is $B;$
\item[e)] if $A\in\mathcal{D}$ then $\square\neg A\in \Gamma_\mathcal{D}.$    
\end{itemize}
\end{definition}

Since the set of formulas $\mathcal{D}$ is finite, $\Gamma_\mathcal{D}$ is finite too.
Now, let $\mathfrak{M}=\la W,R,\{S_w:w\in W\},V\ra$ be a generalised Veltman model and let $\Gamma_{\mathcal{D}}$ 
be an adequate set of formulas w.r.t.\ some appropriate set $\mathcal D$. 
For nodes $w,u\in W$, we write $w\equiv_{\Gamma_{\mathcal{D}}} u$ if for all $A\in\Gamma_{\mathcal{D}}$ we 
have $\mathfrak{M},w\Vdash A$ if and only if $\mathfrak{M},u\Vdash A.$

Let $\sim\;\subseteq\;\equiv_{\Gamma_{\mathcal{D}}}$ be an equivalence relation on the set $W$. Denote the 
$\sim$-equivalence class of $w\in W$ by $[w]$, and $\widetilde{V}=\{[w]:w\in  V\}$ for any $V\subseteq W.$

A \textit{filtration} of a model $\mathfrak{M}$ through $\Gamma_{\mathcal{D}},\sim$ is any generalised Veltman model
$\widetilde{\mathfrak{M}}=\la\widetilde{W},\widetilde{R},\{\widetilde{S}_{[w]}:[w]\in \widetilde{W}\},V' \ra$ such 
that for all $w\in W$ and $A\in\Gamma_{\mathcal{D}}$ we have $\mathfrak{M},w\Vdash A$ if
and only if $\widetilde{\mathfrak{M}},[w]\Vdash' A$. 
%\\\joost{Do we wish a $\Vdash'$ in the latter?}\\
%\luka{I think this is intentional (by Mladen), since the forcing relation is denoted as $V$ and $V'$ here.  }\\
%\textcolor{red}{\bf Mladen: I agree with $\Vdash'$}\\
%\luka{Are we replacing V' with $\Vdash'$?}\\
%\joost{Indeed, you are right: $\Vdash$ really is parametrized on $V/V'$. Thus $\Vdash$ is indeed correct. Possibly, using $\Vdash'$ can empasize this. Since this is your section I really leave the decisio up to you and just wanted to raise the issue.}
Fact e) of Lemma \ref{lema-bismulacije} implies that the largest bisimulation 
$\sim_{\mathfrak{M}}$ of model $\mathfrak{M}$ exists. 
Facts b), c) and d) of the same lemma imply that $\sim_{\mathfrak{M}}$ is an equivalence relation, while a) 
implies $\sim_{\mathfrak{M}}\;\subseteq\;\equiv_{\Gamma_{\mathcal{D}}}.$

\begin{lemma} [\cite{Perkov-Vukovic-16}, Lemma 2.3 and Theorem 2.4] Let $\mathfrak{M}=\la W,R,\{S_w:w\in W\},V\ra$ 
be a generalised Veltman model, $\Gamma_{\mathcal{D}}$ an adequate set of formulas, and $\sim_{\mathfrak{M}}$ 
the largest bisimulation of model $\mathfrak{M}.$ Let us define:

\begin{itemize}
\item[a)] $\widetilde{R}=\{([w],[u]):wRu $ and there is $\Box A\in\Gamma_{\mathcal{D}}$ such that 
$\mathfrak{M},w\not\Vdash\Box A$ and $\mathfrak{M},u\Vdash\Box A\};$

\item[b)] $[u]\widetilde{S}_{[w]}\widetilde{V}$ if and only if $[w]\widetilde{R}[u]$, 
$\widetilde{V}\subseteq \widetilde{R}\big[[w]\big]$, 
%\\\joost{I sort of dislike these double brackets. If the inner ones were smaller that would already help but still.}\\
%\luka{Do you have another notation in mind?}\\
%\textcolor{red}{\bf Mladen: I think that the inner brackets should be smaller. Can anyone change that?}\\
%\luka{I'll change it.}\\
and for all $w'\in[w]$ and $u'\in[u]$ such that $w'Ru'$ we have 
$u'S_{w'}V'$ for some $V'$ such that $\widetilde{V'}\subseteq \widetilde{V};$

\item[c)] for all propositional variables $p\in\Gamma_{\mathcal{D}}$ put $[w]\in V'(p)$ if and only if 
$w\in V(p),$ and interpret propositional variables
          $q\notin\Gamma_{\mathcal{D}}$ arbitrarily (e.g.\ put $[w]\not\in   V(q)$ for all $[w]\in\widetilde{W}).$
\end{itemize}
Then $\widetilde{\mathfrak{M}}=\la \widetilde{W},\widetilde{R},\{\widetilde{S}_{[w]}:[w]\in \widetilde{W}\},V'\ra$ 
is a filtration of the model $\mathfrak{M}$ through $\Gamma_{\mathcal{D}},\sim_{\mathfrak{M}}$.
\label{lm3}
\end{lemma}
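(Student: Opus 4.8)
The goal is to verify the two defining requirements of a filtration: that $\widetilde{\mathfrak M}$ is a genuine generalised Veltman model, and that $\mathfrak M,w\Vdash A$ if and only if $\widetilde{\mathfrak M},[w]\Vdash' A$ for every $A\in\Gamma_{\mathcal D}$. I would begin with the frame axioms. Transitivity of $\widetilde R$ follows from transitivity of $R$ together with a box-witness argument: if $[w]\widetilde R[u]$ is witnessed by $\Box A\in\Gamma_{\mathcal D}$ and $[u]\widetilde R[x]$ by $\Box B\in\Gamma_{\mathcal D}$, then $\Box B$ also witnesses $[w]\widetilde R[x]$, since $u\nVdash\Box B$ together with $wRu$ forces $w\nVdash\Box B$. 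Converse well-foundedness is exactly what the box-witness clause buys: as one moves along any $\widetilde R$-edge, the (finite) set of boxed $\Gamma_{\mathcal D}$-formulas forced at the endpoint strictly grows — it never shrinks, because $R$ propagates boxed formulas upward, and the witness contributes at least one new member — so finiteness of $\Gamma_{\mathcal D}$ rules out infinite $\widetilde R$-chains. The frame conditions (b)--(e) for $\widetilde S_{[w]}$ I would read off the definition; the only delicate point is that everything is well defined on classes, which rests on $\sim_{\mathfrak M}\subseteq\equiv_{\Gamma_{\mathcal D}}$ and on the fact (Lemma \ref{lema-bismulacije}(a)) that bisimilar worlds satisfy the same formulas, so that the truth values entering the definitions do not depend on the chosen representatives.

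The core is the truth lemma, proved by induction on $A\in\Gamma_{\mathcal D}$. Since $\rhd$ is the only primitive modality, the Boolean cases are immediate from the definition of $V'$ and the closure of $\Gamma_{\mathcal D}$ under subformulas and $\sim$, and all the substance sits in the case $A\rhd B$. Both directions use one device repeatedly: to certify $[w]\widetilde R[t]$ for a world $t$ one needs a box-witness, and such a witness can be manufactured by pushing $t$ up to an $R$-maximal point. Concretely, the adequate-set conditions (c) and (d) put $A\rhd\bot$ and $B\rhd\bot$ — which, under the abbreviations in force, are precisely $\Box\neg A$ and $\Box\neg B$ — into $\Gamma_{\mathcal D}$, and an $R$-maximal $A$-point (respectively $B$-point) above $w$ forces the corresponding boxed formula while $w$ refutes it. Converse well-foundedness of $R$ guarantees these maximal points exist, while frame condition (d) together with quasi-reflexivity (b) and quasi-transitivity (c) of $\mathfrak M$ lets one slide the $S_w$-relation up to them, so the maximal points can be used without losing either the $S_w$-connection or the forcing of the relevant formula.

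For the direction $w\nVdash A\rhd B\Rightarrow[w]\nVdash' A\rhd B$ I would choose the witness carefully: I claim there is an $R$-maximal $A$-point $u^{*}$ above $w$ (so $u^{*}\Vdash A\wedge\Box\neg A$) that still refutes the principle, i.e.\ every $V$ with $u^{*}S_wV$ has $V\nVdash B$. Indeed, were every maximal $A$-point above $w$ to possess a $B$-forcing $S_w$-successor set, then condition (d) and quasi-transitivity would propagate such a set down to every $A$-point above $w$, giving $w\Vdash A\rhd B$, a contradiction. The witness $\Box\neg A=A\rhd\bot\in\Gamma_{\mathcal D}$ then yields $[w]\widetilde R[u^{*}]$, the induction hypothesis gives $[u^{*}]\Vdash' A$, and if some $\widetilde V$ with $[u^{*}]\widetilde S_{[w]}\widetilde V$ forced $B$, the defining clause of $\widetilde S$ read at the representatives $w,u^{*}$ would produce $V'$ with $u^{*}S_wV'$ and $\widetilde{V'}\subseteq\widetilde V$, hence $V'\Vdash B$ by the induction hypothesis, contradicting the choice of $u^{*}$. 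For the converse, given $[u]$ with $[w]\widetilde R[u]$ and $[u]\Vdash' A$, pick representatives $w_0\in[w]$, $u_0\in[u]$ with $w_0Ru_0$, use $w_0\Vdash A\rhd B$ and $u_0\Vdash A$ to obtain $V$ with $u_0S_{w_0}V$ and $V\Vdash B$, and replace each $v\in V$ by an $R$-maximal $B$-point above it; using (d), quasi-reflexivity and quasi-transitivity this yields a set $V'$ with $u_0S_{w_0}V'$, $V'\Vdash B$ and $\widetilde{V'}\subseteq\widetilde R[[w]]$, so that $\widetilde{V'}$ is a legitimate candidate successor forcing $B$.

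I expect the genuine obstacle to be the verification that $[u]\widetilde S_{[w]}\widetilde{V'}$ in this last step, and more generally the representative-independence built into the definition of $\widetilde S$. That definition demands, for \emph{every} pair of representatives $w'\in[w]$, $u'\in[u]$ with $w'Ru'$, a set $V''$ with $u'S_{w'}V''$ and $\widetilde{V''}\subseteq\widetilde{V'}$, whereas in $\mathfrak M$ one only has the single fact $u_0S_{w_0}V'$. Bridging this gap is exactly where the largest bisimulation $\sim_{\mathfrak M}$ is indispensable: its (forth) and (back) clauses transfer $S$-successor sets between bisimilar worlds up to the equivalence classes, which is precisely the matching $\widetilde{V''}\subseteq\widetilde{V'}$ that the definition requires. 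This is also why the construction filters through the largest bisimulation rather than through bare $\equiv_{\Gamma_{\mathcal D}}$: only bisimilarity, not mere $\Gamma_{\mathcal D}$-equivalence, guarantees that the $S$-behaviour of two representatives of a class agrees closely enough for the universally quantified clause to hold. The remaining frame-condition checks for $\widetilde S$ (quasi-reflexivity, the clause $[w]\widetilde R[u]\widetilde R[v]\Rightarrow[u]\widetilde S_{[w]}\{[v]\}$, monotonicity and quasi-transitivity) are carried out by this same transfer-across-$\sim_{\mathfrak M}$ technique, and become routine once this central point is in place.
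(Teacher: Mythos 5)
The paper itself offers no proof of this lemma --- it is quoted verbatim from \cite{Perkov-Vukovic-16} --- so your attempt can only be measured against what the construction actually demands. Most of it holds up: the box-witness argument for transitivity and converse well-foundedness of $\widetilde{R}$ is correct, and the use of $R$-maximal points (justified by converse well-foundedness of $R$, condition d), quasi-reflexivity and quasi-transitivity) to manufacture the boxed witnesses $A\rhd\bot$ and $B\rhd\bot$ supplied by clauses c) and d) of adequacy is exactly the right device. Your treatment of the direction $w\nVdash A\rhd B\Rightarrow[w]\nVdash' A\rhd B$ is sound: the maximal $A$-point $u^*$ refuting the principle exists by the propagation argument you give, and reading the universally quantified clause of $\widetilde{S}$ at the single pair $(w,u^*)$ is all that direction needs.

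The gap is in the converse direction, and it sits precisely where you predicted but is not closed by the mechanism you propose. You take one representative pair $w_0Ru_0$, extract one set $V'$ with $u_0S_{w_0}V'$, and offer $\widetilde{V'}$ as the successor set; you then appeal to (forth)/(back) of $\sim_{\mathfrak M}$ to supply, for an arbitrary other pair $w'\in[w]$, $u'\in[u]$ with $w'Ru'$, a set $V''$ with $u'S_{w'}V''$ and $\widetilde{V''}\subseteq\widetilde{V'}$. But the bisimulation clauses do not deliver this: given $w_0\sim w'$ and $w'Ru'$, the (back) clause produces \emph{some} $u$ with $w_0Ru$, $u\sim u'$, and transfers $S_{w_0}$-successor sets \emph{of that $u$} to $S_{w'}$-successor sets of $u'$. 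You have no control forcing $u=u_0$, so the transferred set lands inside $\widetilde{V}$ for some successor set $V$ of $u$, which need not be contained in $\widetilde{V'}$; the quotients of successor sets of two distinct bisimilar worlds are in general incomparable. The repair is to abandon the single-pair candidate and take the canonical large set $\widetilde{V}:=\{[v]\in\widetilde{R}\bigl[[w]\bigr]:v\Vdash B\}$ (or the union over all representative pairs of the quotients of the maximalized successor sets). Then for \emph{each} pair $(w',u')$ one argues independently: $w'\Vdash A\rhd B$ and $u'\Vdash A$ by bisimulation-invariance of truth (Lemma~\ref{lema-bismulacije}a)), so $u'S_{w'}V''$ for some $V''\Vdash B$, and after replacing each $v\in V''$ by an $R$-maximal $B$-point one gets $\widetilde{V''}\subseteq\widetilde{V}$ directly --- no transfer of successor sets across the bisimulation is needed at this point, and monotonicity of the $\widetilde{V}$-clause in the definition of $\widetilde{S}$ does the rest. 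With that substitution your argument goes through.
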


We can use the construction above to prove the finite model property of many logics.
Let us briefly sketch the proof. We start by fixing a formula $A$ satisfied in some model $\mathfrak M$ and a 
finite adequate set $\Gamma_{\mathcal{D}}$ such that $A\in\Gamma_{\mathcal{D}}$.
For the purposes of this proof, we assume the language contains only the propositional variables that are 
contained in $A$. Let $\widetilde{\mathfrak{M}}$ be a filtration of this model as described above.
We first prove that the length of $\widetilde{R}$-chains is bounded by the number of occurrences of boxed 
formulas in $\Gamma_{\mathcal{D}}$. This implies that bisimilarity can be simplified to $n$-bisimilarity, 
for a sufficiently large $n$. Lemma \ref{lm:nbis} implies that clusters are $n$-bisimilar if and only if they 
are modally $n$-equivalent. Thus there can be only finitely many classes with respect to $n$-equivalence, 
implying our model is finite.

\begin{theorem}[\cite{Perkov-Vukovic-16}]
The logic \extil{} has the finite model property with respect to generalised Veltman models.
\label{tm3}
\end{theorem}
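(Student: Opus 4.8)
The plan is to start from a formula $A$ that is satisfiable in some generalised Veltman model $\mathfrak{M}=\la W,R,\{S_w:w\in W\},V\ra$, with $\mathfrak{M},x\Vdash A$ for some $x\in W$, and to extract from it a \emph{finite} generalised Veltman model still satisfying $A$. First I would pick a finite set $\mathcal{D}$ closed under subformulas and single negations with $A,\top\in\mathcal{D}$, and then a finite adequate set $\Gamma_{\mathcal{D}}$ with respect to $\mathcal{D}$; finiteness of $\Gamma_{\mathcal{D}}$ was observed right after its definition. As only the propositional variables occurring in $A$ are relevant, I would restrict the language to those, so that there are finitely many propositional variables. Applying Lemma \ref{lm3} to $\mathfrak{M}$, $\Gamma_{\mathcal{D}}$ and the largest bisimulation $\sim_{\mathfrak{M}}$ produces a filtration $\widetilde{\mathfrak{M}}=\la\widetilde{W},\widetilde{R},\{\widetilde{S}_{[w]}\},V'\ra$, which is again a generalised Veltman model and, since $A\in\Gamma_{\mathcal{D}}$, satisfies $\widetilde{\mathfrak{M}},[x]\Vdash A$. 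Everything then reduces to showing that $\widetilde{W}$ is finite.

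The key step is to bound the length of $\widetilde{R}$-chains. By clause a) of Lemma \ref{lm3}, every edge $[w]\widetilde{R}[u]$ is witnessed by some $\Box B\in\Gamma_{\mathcal{D}}$ with $\mathfrak{M},w\nVdash\Box B$ and $\mathfrak{M},u\Vdash\Box B$; because $\Box B\in\Gamma_{\mathcal{D}}$ its truth value is constant on each $\equiv_{\Gamma_{\mathcal{D}}}$-class, hence well defined on the classes. Along any chain $[w_0]\widetilde{R}[w_1]\widetilde{R}\cdots\widetilde{R}[w_k]$ I would argue, using transitivity of $R$ together with axiom ${\sf L2}$, that once a boxed formula of $\Gamma_{\mathcal{D}}$ holds it continues to hold at all later nodes, whereas the witness $\Box B$ of each edge fails one step earlier. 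Hence the set of boxed formulas of $\Gamma_{\mathcal{D}}$ that are true strictly grows at each step, so $k$ cannot exceed the number of boxed formulas in $\Gamma_{\mathcal{D}}$. Let $n$ be this bound.

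With the $\widetilde{R}$-depth of $\widetilde{\mathfrak{M}}$ bounded by $n$, no world can distinguish successors more than $n$ $\widetilde{R}$-steps away, so bisimilarity on $\widetilde{\mathfrak{M}}$ collapses to $n$-bisimilarity. By Lemma \ref{lm:nbis}, and using that there are only finitely many propositional variables, $n$-bisimilarity coincides with $n$-modal equivalence $\equiv_n$. Since $\widetilde{\mathfrak{M}}$ is the quotient of $\mathfrak{M}$ by its largest bisimulation, distinct worlds of $\widetilde{W}$ are non-bisimilar and hence, by the two equivalences just noted, realise distinct $\equiv_n$-types. Over finitely many propositional variables there are, up to logical equivalence, only finitely many formulas of modal depth at most $n$, so there are only finitely many $\equiv_n$-types. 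Therefore $\widetilde{W}$ is finite, and $\widetilde{\mathfrak{M}}$ is a finite generalised Veltman model satisfying $A$, which is exactly the finite model property claimed in Theorem \ref{tm3}.

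I expect the main obstacle to be the passage from the $\widetilde{R}$-depth bound to finiteness, namely making precise that on $\widetilde{\mathfrak{M}}$ full bisimilarity really does reduce to $n$-bisimilarity and that distinct $\sim_{\mathfrak{M}}$-classes therefore carry distinct $\equiv_n$-types. One must check carefully that the generalised back-and-forth clauses behave well under truncation at depth $n$, and that quotienting by the largest bisimulation indeed leaves no two distinct worlds $n$-bisimilar; the chain-length bound itself, resting on ${\sf L2}$ and the boxed-formula witness in the definition of $\widetilde{R}$, is the clean part of the argument.
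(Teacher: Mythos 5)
Your proposal follows essentially the same route as the paper's own sketch: filter $\mathfrak{M}$ through a finite adequate set $\Gamma_{\mathcal{D}}$ using the largest bisimulation (Lemma \ref{lm3}), bound the length of $\widetilde{R}$-chains by the number of boxed formulas in $\Gamma_{\mathcal{D}}$, collapse bisimilarity to $n$-bisimilarity and hence, via Lemma \ref{lm:nbis} over finitely many propositional variables, to $n$-modal equivalence, of which there are only finitely many types. The delicate points you flag at the end (truncation of the back-and-forth clauses at depth $n$, and that distinct classes of the quotient carry distinct $\equiv_n$-types) are precisely the steps the paper also leaves implicit, deferring to \cite{Perkov-Vukovic-16}.
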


For \extil{}, \extil{M}, \extil{P} and \extil{W}, the original completeness proofs were proofs of 
completeness w.r.t.\ appropriate finite models  
\cite{JonghVeltman:1990:ProvabilityLogicsForRelativeInterpretability}, 
\cite{JonghVeltman:1999:ILW}. 
For these logics, the FMP w.r.t.\ the ordinary semantics and decidability are immediate (and 
completeness and the 
FMP w.r.t.\ GVS are easily shown to follow from these results).
For more complex logics, not much is known about the FMP w.r.t.\ the ordinary semantics. 

To prove that a specific extension has the FMP, it remains to show that  filtration preserves its 
characteristic property.  This approach was successfully used to prove the FMP of 
\extil{M_0}, \extil{W^*}, 
\extil{P_0} and \extil{R} w.r.t.\ GVS \cite{Perkov-Vukovic-16}, 
\cite{Mikec-Perkov-Vukovic-17}, \cite{Mikec-Vukovic-20}.

Since we have the finite model property, and ``finite'' can be taken to mean finite in every sense 
(i.e.\ there is a finite code, obtainable in a straightforward manner, for every such model),
we also have decidability. This follows by the standard argument: enumerate all the proofs
(which is possible since all the logics \extil{X} in question are recursively enumerable)
and all the (codes of) finite models simultaneously. Sooner or later, we either find a proof 
of $A$, or, because of the completeness and the FMP, a model of $\neg A$.
\begin{corollary}
The logics \extil{M_0}, \extil{W^*}, \extil{P_0} and \extil{R}
        are decidable.
\end{corollary}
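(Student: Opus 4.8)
The plan is to run, for each of the four logics, the standard ``completeness plus finite model property'' decision procedure. Fix one of the logics and call it \extil{X}. Two ingredients are already at hand. First, \extil{X} is complete with respect to the class of \ilgen{X}-models, as established in Section \ref{section:ModalCompleteness}. Second, \extil{X} has the finite model property with respect to GVS: the filtration of Lemma \ref{lm3} is finite, and (as recorded just after Theorem \ref{tm3}) it preserves the characteristic property \kgen{X} for each of these four logics. Combining these, a formula $A$ is a theorem of \extil{X} if and only if it is forced at every world of every finite \ilgen{X}-model. The ``only if'' direction is soundness; for the ``if'' direction, a non-theorem $A$ has $\neg A$ consistent, so by completeness $\neg A$ is satisfied in some \ilgen{X}-model, and by the FMP in a \emph{finite} one.

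Next I would check that finite generalised Veltman models can be effectively coded and enumerated. A finite \ilgen{X}-model is given by a finite set $W$, a relation $R\subseteq W\times W$, a family $\{S_w : w\in W\}$ with each $S_w\subseteq W\times(\mathcal{P}(W)\setminus\{\emptyset\})$, and a valuation on the finitely many propositional variables occurring in $A$; this is a finite object, so the collection of such codes is recursively enumerable. Moreover, testing whether such a structure is a generalised Veltman frame (conditions a)--e) of the definition, together with converse well-foundedness of $R$, which on a finite set is just acyclicity) and whether it additionally satisfies \kgen{X} is decidable, because every quantifier involved ranges over the finite sets $W$ or $\mathcal{P}(W)$. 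Likewise, deciding $\mathfrak{M},w\Vdash A$ proceeds by recursion on $A$; the only nontrivial clause is that for $\rhd$, where the existential witness $V$ ranges over the finite powerset $\mathcal{P}(W)$, hence is decidable.

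With these pieces in place, the decision procedure is the usual dovetailing. To decide whether $\extil{X}\vdash A$, I would run two searches in parallel: one enumerating all \extil{X}-proofs (possible since each \extil{X} has a recursive axiomatisation and finitely many rules, hence is recursively enumerable) and halting if it finds a proof of $A$; the other enumerating all codes of finite \ilgen{X}-models and halting if it finds one with a world forcing $\neg A$. By the characterisation in the first paragraph these outcomes are mutually exclusive and jointly exhaustive, so exactly one search halts: the proof search if $A$ is a theorem, and the model search if $A$ is not (the FMP guaranteeing the refuting finite model the search will eventually produce). The combined procedure therefore always terminates with the correct verdict, so \extil{X} is decidable.

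I do not expect a genuine obstacle at the level of this corollary, since the substantive content---that filtration preserves each \kgen{X}---is exactly what the cited FMP results supply and what is assumed here. The only point deserving care is the decidability of model-checking and of the frame-condition tests in the presence of the \emph{set-valued} relations $S_w$ and the possibly higher-order characteristic properties; but restricting attention to finite models makes every quantifier over worlds and over sets of worlds bounded, so all these tests reduce to finite computations.
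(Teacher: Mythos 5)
Your proposal is correct and follows essentially the same route as the paper: completeness w.r.t.\ GVS plus the finite model property obtained via filtration, combined with the standard dovetailed enumeration of proofs and of codes of finite models. The extra care you take about effectively coding the set-valued relations $S_w$ and checking the frame conditions on finite structures is exactly the point the paper glosses as ``finite in every sense,'' so nothing substantive differs.
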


%%%%%%%%%%%%%%%%%%%%%%%%%%%%%%%%%%%%%%%%%%%%%%%%%%%%%%%%%%%%%%%%%%%%%%%%%%%%%%%%%%%%%%%%%%%%%%%%%%

\section{Hierarchies and frame conditions}\label{section:Hierarchies}

In this final section we will present frame conditions of a new series of principles in \ilal as 
presented in \cite{GorisJoosten:2020:TwoSeries}. We shall recall the frame-conditions computed for 
this series with respect to regular Veltman semantics. Next, we shall present the respective frame 
conditions for the GVS. These novel results are moreover formalised in the proof assistant Agda 
and will shortly be available in the master thesis of  Mas Rovira (\cite{MasRovira:2020:MastersThesis}) written under the direction of 
Joosten and Mikec.

\subsection{A broad series of principles}

Let us present here one of two series from \cite{GorisJoosten:2020:TwoSeries}. By lack of a better name the series was called \emph{the broad series}. To present this series we first define a series of auxiliary formulas. For any $n\geq 1$ we define the schemata $\uu_n$ as follows.
\begin{align*}
\uu_1 &:= \Diamond\neg(D_1\rhd \neg C),\\
\uu_{n+2} &:= \Diamond((D_{n+1}\rhd D_{n+2})\wedge\uu_{n+1}).
\end{align*}
Now, for $n\geq 0$ we define the schemata for the broad series $\principle R^n$ as follows.
\begin{align*}
\principle{R}^0 &:= A\rhd B\rightarrow\neg(A\rhd \neg C)\rhd B\wedge\Box C,\\
\principle{R}^{n+1} &: = A\rhd B\rightarrow\uu_{n+1}\wedge(D_{n+1}\rhd A)\rhd B\wedge\Box C.
\end{align*}

As an illustration we shall calculate the first four principles.
\[
\begin{array}{lll}
\principle{R}^0 & :=  & A \rhd B \to \neg (A \rhd \neg C) \rhd B \wedge \Box C\\
\principle{R}^1 &:=& A \rhd B \to \Diamond \neg(D_1 \rhd \neg C) \wedge (D_1 \rhd A)  \rhd B \wedge \Box  C\\
\principle{R}^2 &:=& A \rhd B \to  \Diamond\Big[ (D_1 \rhd  D_2) \wedge\Diamond\neg(D_1 \rhd \neg C)\Big] \wedge (D_2 \rhd A) \rhd B \wedge \Box  C\\

\principle{R}^3 &:=& A \rhd B \to  \Diamond \Big( (D_2\rhd D_3) \wedge \Diamond\Big[ (D_1 \rhd  D_2) \wedge\Diamond\neg(D_1 \rhd \neg C)\Big] \Big) \wedge (D_3 \rhd A) \\
 & & \ \ \ \ \ \ \ \ \ \ \ \ \ \ \ \ \ \ \ \ \ \ \ \ \ \ \ \ \ \ \ \ \ \ \ \ \ \ \ \  \ \ \ \ \ \ \ \ \ \ \ \ \ \ \ \ \ \ \ \ \ \ \ \ \ \ \ \ \ \ \ \ \ \ \ \ \ \  \rhd B \wedge \Box  C

\end{array}
\]

It is not hard to determine the frame condition for the first couple of principles in this series and in Figure \ref{figure:broadSeries} we have depicted the first three frame-conditions. In this section we shall prove that the correspondence proceeds as expected. Informally, the frame condition for $\principle{R}^n$ shall be the universal closure of
\begin{equation}\label{equation:informalFrameConditionForBroadHierarchy}
x_{n+1}Rx_n \ldots Rx_0 Ry_0 S_{x_1} y_1 \ldots S_{x_{n}}y_{n} S_{x_{n+1}}y_{n+1} R z \Rightarrow y_0 S_{x_0}z.
\end{equation}

%\vspace{-1.5cm}
\begin{figure}[h]

\centering
\includegraphics[width=0.9\textwidth]{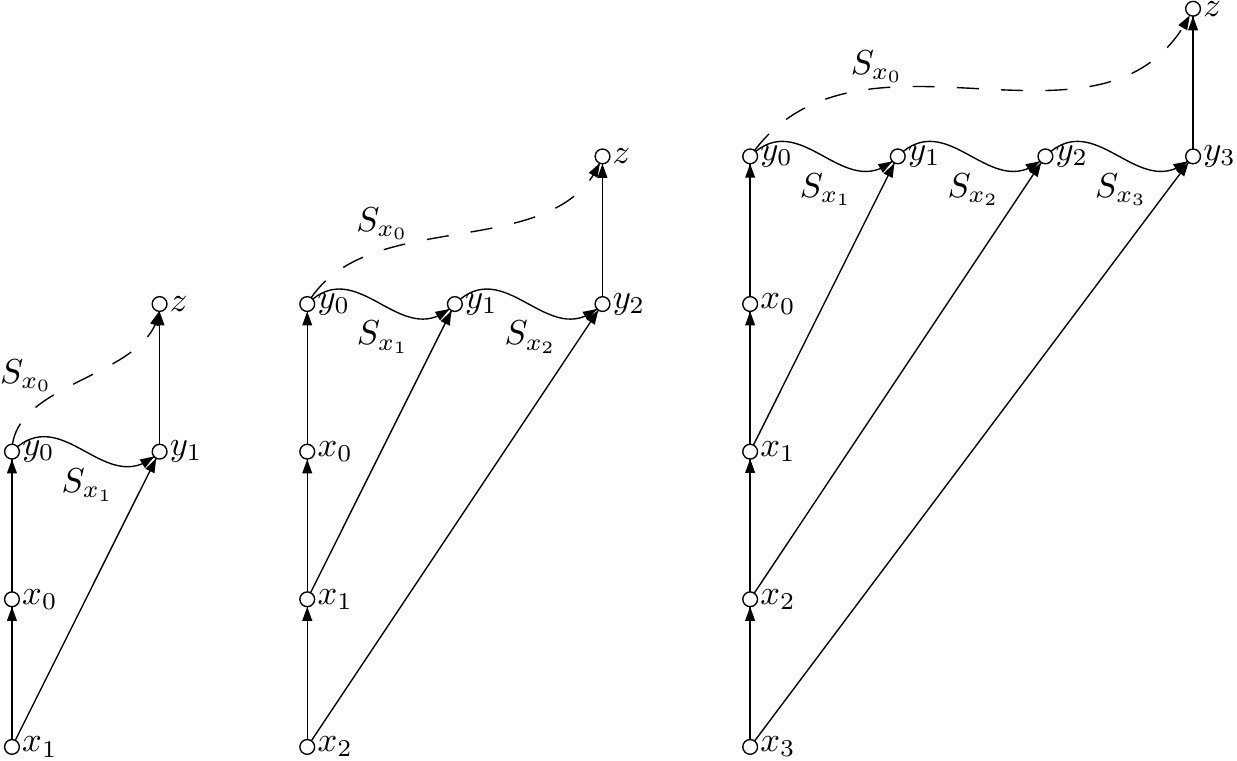}

\caption{From left to right, this figure depicts the frame conditions $(\principle{R}^0)$ through $(\principle{R}^2)$ corresponding to $\principle{R}^0$ through $\principle{R}^2$. The reading convention is as always: if all the un-dashed relations are present as in the picture, then also the dashed relation should be there.}\label{figure:broadSeries}
\end{figure}

We shall first recast the frame condition in a recursive fashion. In writing \eqref{equation:informalFrameConditionForBroadHierarchy} recursively we shall use those variables that will emphasise the relation with \eqref{equation:informalFrameConditionForBroadHierarchy}. Of course, free variables can be renamed at the readers liking.

First, we start by introducing a relation $\mathcal B_n$ that captures the antecedent of \eqref{equation:informalFrameConditionForBroadHierarchy}. Note that this antecedent says that first there is a chain of points $x_i$ related by $R$, followed by a chain of points $y_i$ related by different $S$ relations. The relation $\mathcal B_n$ will be applied to the end-points of both chains where the condition on the intermediate points is imposed by recursion.

\begin{align*}
\mathcal B_0(x_1,x_0,y_0,y_1) &:= x_1Rx_1Ry_0S_{x_1}y_1,\\
\mathcal B_{n+1}(x_{n+2},x_{0},y_0,y_{n+2}) &:= \exists x_{n+1}, y_{n+1} \big(x_{n+2}Rx_{n+1}
\ \& \  \mathcal B_n(x_{n+1},x_0,y_0,y_{n+1}) \\
 & \ \ \ \ \ \ \ \ \ \ \  \ \ \ \ \ \ \ \ \ \ \  \ \ \ \ \ \ \ \ \ \ \  \ \ \ \ \ \ \ \ \ \ \ \& \ y_{n+1}S_{x_{n+2}}y_{n+2} \big).
\end{align*}
For every $n\geq 0$ we can now define the first order frame condition $(\principle{R}^n)$ as follows.
\[
(\principle{R}^n) := \forall x_{n+1},x_0,y_0,y_{n+1} \ \big(\mathcal B_n(x_{n+1},x_0,y_0,y_{n+1})\Rightarrow \forall z\, (y_{n+1}Rz\Rightarrow y_0S_{x_0}z)\big)
%\enspace.
.
\]

Sometimes we shall write $x_{n+1}\mathcal B_n[x_0,y_0] \, y_{n+1}$ conceiving the quaternary relation $\mathcal B_n$ as a binary relation indexed by the pair $x_0,y_0$. 
\begin{theorem}
We have $\mathfrak{F}\models (\principle{R}^n)$ if and only if $\mathfrak{F}\models \principle{R}^n,$
for each Veltman frame $\mathfrak{F}$ and $n\in\mathbb{N}.$
\end{theorem}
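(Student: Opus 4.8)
The plan is to prove the two implications separately, reading $(\principle{R}^n)$ as a frame condition in the usual sense: $\mathfrak{F}\models(\principle{R}^n)$ iff every model based on $\mathfrak{F}$ validates $\principle{R}^n$. The guiding picture is the chain in \eqref{equation:informalFrameConditionForBroadHierarchy}, which I read so that $\principle{R}^n$ is always evaluated at the bottom world $x_{n+1}$. Under this reading the outer interpretability modality of $\principle{R}^n$ corresponds to the topmost relation $S_{x_{n+1}}$, the innermost subformula $\neg(D_1\rhd\neg C)$ (or $\neg(A\rhd\neg C)$ when $n=0$) forces $C$ exactly on the $S_{x_0}$-successors of $y_0$, and the ``ladder'' $D_1\rhd D_2,\dots,D_{n-1}\rhd D_n,\ D_n\rhd A,\ A\rhd B$ builds the $S$-chain $y_0S_{x_1}y_1\cdots S_{x_{n+1}}y_{n+1}$ one link at a time.

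For soundness (frame condition implies validity) I would fix a model on a frame satisfying $(\principle{R}^n)$, set $w:=x_{n+1}$ with $w\Vdash A\rhd B$, and take an \emph{arbitrary} $R$-successor $u$ with $u\Vdash\uu_n\wedge(D_n\rhd A)$. The first step is an inner induction on $n$ unravelling $\uu_n$ at $u$: iterating the clauses for $\Diamond$ and $\rhd$ constructs witnesses that I rename $x_n:=u,x_{n-1},\dots,x_0,y_0$, yielding $x_nRx_{n-1}R\cdots Rx_0Ry_0$ together with $x_{i-1}\Vdash D_{i-1}\rhd D_i$ (for $2\le i\le n$), $y_0\Vdash D_1$, and $\forall r\,(y_0S_{x_0}r\Rightarrow r\Vdash C)$ from the innermost conjunct. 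The second step climbs the ladder, using each $x_i\Vdash D_i\rhd D_{i+1}$ (with $A$ for $D_{n+1}$, then $w\Vdash A\rhd B$) and transitivity of $R$ to produce $y_1,\dots,y_{n+1}$ with $y_{i-1}S_{x_i}y_i$, $y_n\Vdash A$, $y_{n+1}\Vdash B$. This configuration is precisely an instance of $\mathcal{B}_n(x_{n+1},x_0,y_0,y_{n+1})$, so for every $z$ with $y_{n+1}Rz$ the frame condition gives $y_0S_{x_0}z$, hence $z\Vdash C$ and $y_{n+1}\Vdash B\wedge\Box C$. Finally $uS_wy_{n+1}$ follows because $x_nRy_n$ gives $x_nS_wy_n$ by $R\cap R[w]^2\subseteq S_w$, which composes with $y_nS_wy_{n+1}$ by transitivity of $S_w$; this is the required witness, so $w\Vdash\principle{R}^n$.

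For completeness I would argue contrapositively. Assuming $\mathfrak{F}\not\models(\principle{R}^n)$ there is a configuration $x_{n+1}R\cdots Rx_0Ry_0S_{x_1}y_1\cdots S_{x_{n+1}}y_{n+1}$ with $y_{n+1}Rz$ and $\neg(y_0S_{x_0}z)$. I then define the valuation by the singletons $V(D_i)=\{y_{i-1}\}$ $(1\le i\le n)$, $V(A)=\{y_n\}$, $V(B)=\{y_{n+1}\}$, and crucially $V(C)=\{v:y_0S_{x_0}v\}$. One checks by induction on $k$ that $x_k\Vdash\uu_k$ (the inductive step exhibits $x_{k-1}$ with $x_{k-1}\Vdash D_{k-1}\rhd D_k$ via $y_{k-2}S_{x_{k-1}}y_{k-1}$ and $\uu_{k-1}$ by hypothesis, the base case using $y_0\Vdash D_1$ and that every genuine $S_{x_0}$-successor of $y_0$ lies in $V(C)$), that $x_n\Vdash D_n\rhd A$ (the only $D_n$-world $y_{n-1}$ is served by $y_{n-1}S_{x_n}y_n\Vdash A$), and that $x_{n+1}\Vdash A\rhd B$ (its only $A$-world $y_n$ is served by $y_nS_{x_{n+1}}y_{n+1}\Vdash B$). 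The refutation then bites at $u=x_n$: every $S_{x_{n+1}}$-successor $v$ of $x_n$ either differs from $y_{n+1}$, so $v\nVdash B$, or equals $y_{n+1}$, which fails $\Box C$ precisely because $y_{n+1}Rz$ and $z\notin V(C)$ — and this is exactly where $\neg(y_0S_{x_0}z)$ is used. Hence $x_{n+1}\nVdash\principle{R}^n$.

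The main obstacle I expect is not any individual semantic step but making the two inductions line up: one must check that the chain extracted from $\uu_n$ in the soundness argument is \emph{literally} an instance of the recursively defined $\mathcal{B}_n$, so the frame condition applies verbatim, and that in the completeness argument the several singleton valuations do not interfere, so that $D_1\rhd D_2,\dots,D_n\rhd A$, the innermost $\neg(D_1\rhd\neg C)$, the premise $A\rhd B$, and the failure of $B\wedge\Box C$ hold simultaneously, including the degenerate cases where some of the $x_i$ or $y_i$ coincide. I would isolate a single \emph{unfolding lemma} relating the forcing of $\uu_n$ to the existence of a $\mathcal{B}_n$-configuration and prove it once by induction; after that both directions follow uniformly, with $n=0$ subsumed by reading $A$ for the absent $D_{n+1}$ and $\neg(A\rhd\neg C)$ for the innermost conjunct.
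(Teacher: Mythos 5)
Your proof is correct and takes essentially the approach the paper intends: the paper states this theorem without proof, but its argument for the generalised-semantics analogue (Theorem \ref{orgede3f91}) rests on exactly the kind of unfolding lemma you isolate (Lemma \ref{orgd348370}), extracting from a world forcing $\uu_n$ the chain of witnesses that instantiates the recursively defined antecedent $\mathcal{B}_n$. Both of your directions check out, including the closing step $uS_w y_{n+1}$ via the inclusion of $R$ restricted to $R[w]$ in $S_w$ plus transitivity of $S_w$, the singleton valuations with $V(C)=\{v: y_0S_{x_0}v\}$ in the contrapositive direction, and the degenerate $n=0$ case.
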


\subsection{Frame conditions for GVS}
In this section we present generalised-frame conditions for the above presented series $\principle{R}^i$. We observe that the mere definition of what it means to be a frame condition of an axiom scheme can be stated in second order logic where an arbitrary valuation corresponds to an arbitrary subset. Since GVS is second order in nature, it may raise a question what actually constitutes a natural frame condition for a principle other than just writing down the definition. This question is discussed in \cite{MasRovira:2020:MastersThesis}.
The \((\principle{R}^n)_{gen}\) condition reads as follows:
\begin{flalign*}
&\forall w,x_0,\ldots ,x_{n-1},y,z,\mathbb{A},\mathbb{B},\mathbb{C},\mathbb{D}_0,\ldots ,\mathbb{D}_{n-1}.\\
&wRx_{n-1}R\ldots Rx_0RyRz, \\
& (\forall u.wRu,u\in \mathbb{A}\Rightarrow \exists V.uS_wV\subseteq \mathbb{B}), \\
& (\forall u.x_{n-1}Ru\in \mathbb{D}_{n-1}\Rightarrow \exists V.uS_{x_{n-1}}V\subseteq \mathbb{A}), \\
& (\forall i\in \{1\ldots n-1\}\forall u.x_iRu\in \mathbb{D}_i\Rightarrow \exists V.uS_{x_i}V\subseteq \mathbb{D}_{i+1}), \\
& (\forall V.zS_yV\Rightarrow V\cap \mathbb{C}\neq 0),      \\
& z\in \mathbb{D}_0 \\
\Rightarrow \ & \exists V\subseteq \mathbb{B}.x_{n-1}S_wV,\{w:\exists v\in V.vRw\}\subseteq \mathbb{C}
\end{flalign*}
%\textcolor{red}{\bf Mladen: Is $R[V]$ defined?}
\begin{lemma}
\label{orgd348370}
Let $\mathfrak{M}$ be a generalised Veltman model, let \(x\) be a world and let \(n\in \mathbb{N}\). For any \(i\leq n\) we have
that if $\mathfrak{M},x \Vdash  U_i$ then there exist some worlds \(y,z,x_0,\ldots ,x_{i}\) such that:
\begin{enumerate}
\item $x_i=x$;
\item $x_iR\ldots Rx_0RyRz$;
\item for all $j\leq i$ we have that $\mathfrak{M},x_j\Vdash U_j$;
\item for all $j<i$ we have that $\mathfrak{M},x_j\Vdash D_j\rhd D_{j+1}$;
\item for all \(V\) we have that if $zS_yV$ then $V\cap \{w:\mathfrak{M},w\Vdash C\}\neq \emptyset $;
\item $\mathfrak{M},z\Vdash D_0$.
\end{enumerate}
\begin{proof}

By induction on \(i\).
%\begin{itemize}
%\item 
For \(i=0\) we have that \(x\Vdash \Diamond \neg (D_0\rhd \neg C)\). It follows that there exists some
\(y\) such that \(xRy\Vdash \neg (D_0\rhd \neg C)\) and therefore there exists some \(z\) such that
\(yRz\Vdash D_0\) and for any \(V\), if \(zS_yV\), then \(V\cap \{w: \mathfrak{M},w\Vdash C\}\neq \emptyset \). It is clear
that all claims are met.
%\item 

For \(i+1\) we have that \(x\Vdash \Diamond (D_i\rhd D_{i+1}\wedge U_i)\). It follows that there
exists some \(x_{i}\) such that \(x_i\Vdash D_i\rhd D_{i+1}\wedge U_i\). By the inductive hypothesis there exist
\(y,z,x_0,\ldots ,x_{i}\) such that satisfy claims \(1\ldots 6\). We set \(x_{i+1}\coloneqq x\). It is
trivial to observe that by using the inductive hypothesis all conditions are met for \(i+1\).
%\end{itemize}
\end{proof}
\end{lemma}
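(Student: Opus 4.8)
The plan is to proceed by induction on $i$, in each case peeling off one layer of the modal semantics of the formula $U_i$. For the base case $i=0$, I would unfold $U_0 = \Diamond\neg(D_0 \rhd \neg C)$: by the semantics of $\Diamond$ there is a world $y$ with $xRy$ and $\mathfrak{M},y \Vdash \neg(D_0 \rhd \neg C)$, i.e.\ $\mathfrak{M},y \nVdash D_0 \rhd \neg C$. Unwinding the forcing clause for $\rhd$, this failure yields a world $z$ with $yRz$, $\mathfrak{M},z \Vdash D_0$, and such that for every $V$ with $zS_y V$ we have $V \nVdash \neg C$; the latter means precisely that $V$ contains some world forcing $C$, that is $V \cap \{w : \mathfrak{M},w \Vdash C\} \neq \emptyset$. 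Setting $x_0 := x$, all six clauses are then immediate, with clause 3 being the hypothesis and clause 4 vacuous.

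For the inductive step, assume the claim for $i$ and suppose $\mathfrak{M},x \Vdash U_{i+1} = \Diamond\big((D_i \rhd D_{i+1}) \wedge U_i\big)$. The $\Diamond$ supplies a world, which I deliberately name $x_i$, with $xRx_i$ and $\mathfrak{M},x_i \Vdash (D_i \rhd D_{i+1}) \wedge U_i$; in particular $\mathfrak{M},x_i \Vdash U_i$ and $\mathfrak{M},x_i \Vdash D_i \rhd D_{i+1}$. Applying the induction hypothesis to $x_i$ produces a chain $x_i R \cdots R x_0 R y R z$ together with the worlds $y,z$ satisfying clauses 1--6 for $i$. I then set $x_{i+1} := x$ and prepend it via $x_{i+1} R x_i$. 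Clauses 2, 5 and 6 are inherited verbatim from the induction hypothesis; clause 3 follows because $\mathfrak{M},x_{i+1}\Vdash U_{i+1}$ supplies the single new case $j=i+1$; and clause 4 follows because $\mathfrak{M},x_i \Vdash D_i \rhd D_{i+1}$ supplies the single new case $j=i$.

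The argument is a routine unwinding, and the only place demanding a little care is the base case: one must correctly push the double negation in $\neg(D_0 \rhd \neg C)$ through the $\rhd$-clause, observing that it is the \emph{universally} quantified statement (clause 5, ranging over all $V$ with $zS_y V$) that emerges from the failure of interpretability, rather than an existential one. I would also keep an eye on the indexing so that the $\Diamond$-witness is named $x_i$ (and not $x_{i+1}$) at each step, since it is exactly this bookkeeping that makes the induction hypothesis directly applicable.
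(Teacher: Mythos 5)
Your proposal is correct and follows essentially the same route as the paper's own proof: induction on $i$, unfolding the outermost $\Diamond$ of $U_i$ at each step, naming the witness $x_i$ so the induction hypothesis applies directly, and setting $x_{i+1}:=x$ in the inductive step. The only difference is that you spell out the base case (the passage from the failure of $D_0\rhd\neg C$ at $y$ to the universally quantified clause 5) in more detail than the paper does, which is harmless.
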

\begin{theorem}
\label{orgede3f91}
For any generalised Veltman frame $\mathfrak{F},$ we have that $\mathfrak{F}$ satisfies
the \((\principle{R}^n)_{gen}\) condition if and only if any model based on $\mathfrak{F}$ forces every
instantiation of the \(\principle{R}^n\) principle. In symbols:
$$\mathfrak{F}\vDash (\principle{R}^n)_{gen}\ \ \ \mbox{if and only if} \ \ \ \mathfrak{F}\vDash \principle{R}^n$$
\end{theorem}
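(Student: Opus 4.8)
The plan is to prove the biconditional by establishing its two implications separately, in each case using the elementary observation that in second-order frame semantics a set variable may be instantiated by — and, conversely, read off as — the extension $\|\varphi\| := \{ v : \mathfrak{M}, v \Vdash \varphi \}$ of a propositional formula $\varphi$. The dictionary I would fix is $\mathbb{A} = \|A\|$, $\mathbb{B} = \|B\|$, $\mathbb{C} = \|C\|$, and each $\mathbb{D}_j$ the extension of the matching interpretability parameter, using the indexing of Lemma \ref{orgd348370} (so that the principle's parameter $D_n$ is what the frame condition calls $\mathbb{D}_{n-1}$). Under this dictionary every hypothesis of $(\principle{R}^n)_{gen}$ transcribes a forcing fact: the first clause reads $w \Vdash A \rhd B$, the clause on $x_{n-1}$ reads $x_{n-1} \Vdash D_n \rhd A$, the indexed family of clauses says $x_i \Vdash D_i \rhd D_{i+1}$ along the chain, and the clause on $z$ together with $z \in \mathbb{D}_0$ says that $z$ witnesses $\neg(D_0 \rhd \neg C)$ at $y$; dually the consequent $\exists V \subseteq \mathbb{B}\,(x_{n-1} S_w V \ \& \ \{ w' : \exists v \in V\ vRw' \} \subseteq \mathbb{C})$ asserts precisely that some $V$ with $x_{n-1} S_w V$ satisfies $V \Vdash B \wedge \Box C$.

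For the direction $\mathfrak{F} \models (\principle{R}^n)_{gen} \Rightarrow \mathfrak{F} \models \principle{R}^n$ I would fix a model $\mathfrak{M}$ over $\mathfrak{F}$ and a world $w$ with $w \Vdash A \rhd B$, take an arbitrary $R$-successor $u$ of $w$ with $u \Vdash \uu_n \wedge (D_n \rhd A)$, and aim to produce $V$ with $u S_w V$ and $V \Vdash B \wedge \Box C$. The key step is to apply Lemma \ref{orgd348370} to $u \Vdash \uu_n$: it unfolds the nested $\Diamond$-structure of $\uu_n$ into exactly the chain $u = x_{n-1} R \cdots R x_0 R y R z$ together with the forcing facts $x_i \Vdash D_i \rhd D_{i+1}$, $z \Vdash D_0$, and $\forall V(z S_y V \Rightarrow V \cap \|C\| \neq \emptyset)$. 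Plugging this chain and the instantiation above into $(\principle{R}^n)_{gen}$, every antecedent is met, so the frame condition returns the desired $V$, which gives $w \Vdash \uu_n \wedge (D_n \rhd A) \rhd B \wedge \Box C$ and hence this instance of $\principle{R}^n$. The degenerate case $n = 0$ uses no $\uu$-unfolding and is simply the correspondence for $\principle{R}$ itself.

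For the converse I would argue by contraposition. If $\mathfrak{F} \not\models (\principle{R}^n)_{gen}$, fix worlds $w, x_0, \dots, x_{n-1}, y, z$ and sets $\mathbb{A}, \mathbb{B}, \mathbb{C}, \mathbb{D}_0, \dots, \mathbb{D}_{n-1}$ realising the chain and all antecedents while violating the consequent, and define a valuation making $A$, $B$, $C$ and the $D_j$ true exactly on $\mathbb{A}$, $\mathbb{B}$, $\mathbb{C}$ and the corresponding $\mathbb{D}_j$ (variables not among these are valued arbitrarily). Reading the dictionary backwards yields $w \Vdash A \rhd B$, $x_{n-1} \Vdash D_n \rhd A$, the interior facts $x_i \Vdash D_i \rhd D_{i+1}$, and that $z$ witnesses $\neg(D_0 \rhd \neg C)$ at $y$. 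I would then run the mirror image of the induction in Lemma \ref{orgd348370} up the chain to obtain $x_{n-1} \Vdash \uu_n$, so that $x_{n-1}$ is an $R$-successor of $w$ forcing $\uu_n \wedge (D_n \rhd A)$. Since the consequent fails, no $V$ with $x_{n-1} S_w V$ has $V \Vdash B \wedge \Box C$, whence $w \nVdash \uu_n \wedge (D_n \rhd A) \rhd B \wedge \Box C$ although $w \Vdash A \rhd B$; thus this model refutes $\principle{R}^n$, as required.

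The conceptual content is the familiar second-order correspondence template, so I expect the main obstacle to be largely organisational: keeping three indexings aligned — the worlds $x_0, \dots, x_{n-1}$ of the chain, the frame-condition sets $\mathbb{D}_j$, and the nested $\Diamond$ and $\rhd$ blocks of $\uu_n$ — so that each antecedent clause matches exactly one forcing fact. The one genuinely substantive point is the converse direction, where $\uu_n$-satisfaction must be \emph{reconstructed} from the chain; this is the mirror image of Lemma \ref{orgd348370} and should follow by the same induction on $n$ (base case: $y \Vdash \neg(D_0 \rhd \neg C)$ from the $z$-clause; inductive step: prepend one $\Diamond$ using the $\rhd$-conjunct forced at the next $x_i$), but it has to be proved rather than merely quoted.
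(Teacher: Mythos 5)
Your proposal is correct and follows essentially the same route as the paper: the forward direction unfolds $\uu_n$ via Lemma~\ref{orgd348370} and feeds the resulting chain into $(\principle{R}^n)_{gen}$, while the converse instantiates the set variables as extensions of propositional variables (the paper argues this direction directly rather than by contraposition, but the content is identical). Your observation that the converse requires reconstructing $\uu_n$-satisfaction by an induction mirroring Lemma~\ref{orgd348370} makes explicit a step the paper leaves as ``routine to check''.
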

\begin{proof}
If \(n=0\) we refer to section \ref{sec:principles_models} or \cite{MasRovira:2020:MastersThesis} for full details.

For \(n+1\) proceed as follows.
Suppose first that we have a generalised Veltman model and a world \(w\) such that \(w\Vdash A\rhd B\).
Then assume also that \(wRx\Vdash ((D_n\rhd A)\wedge U_n)\). By Lemma \ref{orgd348370} it follows
that there exist \(y,z,x_0,\ldots ,x_{n}\) satisfying \(1\ldots 6\). For a formula $F$, define $\llbracket F\rrbracket \coloneqq \{ x\in \mathfrak{M} : x\Vdash F\}$. Then let \(\mathbb{A}\coloneqq \llbracket A\rrbracket \),
\(\mathbb{B}\coloneqq \llbracket B\rrbracket \), \(\mathbb{C}\coloneqq \llbracket C\rrbracket \) and for \(i\leq n\) let \(\mathbb{D}_i\coloneqq \llbracket D_i\rrbracket \).\\ 

It is routine to check that
the left part of the \((\principle{R}^{n+1})_{gen}\) holds and thus we get that there exists
some \(V\subseteq \mathbb{B}\) such that \(x_{n}S_wV\) and \(R[V]\subseteq \mathbb{C}\). Since \(V\subseteq \mathbb{B}\) we have that
\(x_{n}\Vdash B\) and since \(R[V]\subseteq \mathbb{C}\) we have \(x_{n}\Vdash \Box C\). Finally, since
\(x_{n}=x\) we conclude \(x\Vdash B\wedge \Box C\).

Let us now prove the opposite direction. Fix a generalised Veltman frame $\mathfrak{F}$ and let \(a,b,c,d_0,\ldots ,d_n\) be propositional
variables and assume $\mathfrak{F}\Vdash \principle{R}^{n+1}.$ Assume that the left part of the
implication of \((\principle{R}^{n+1})_{gen}\) holds. Now consider a model based on \(\mathfrak{F}\)
that satisfies the following:
$$\llbracket a\rrbracket=\mathbb{A}, \ \llbracket b\rrbracket= \mathbb{B}, \
   \llbracket c\rrbracket=\mathbb{C}, \ \llbracket d_i\rrbracket=\mathbb{D}_i, 
   \text{ for all } i\in \{0\ldots n\}$$

Now one can routinely check that \(w\Vdash A\rhd B\) and \(x\Vdash ((D_n\rhd A)\wedge U_n)\), hence there
exists \(U\) such that \(xS_wU\) and \(U\Vdash B\wedge \Box C\). From that we derive that \(U\subseteq \mathbb{B}\)
and \(R[U]\subseteq \mathbb{C}\).

This proof has been formalised in Agda. Full details can be found in \cite{MasRovira:2020:MastersThesis}.
\end{proof}
\section*{Acknowledgements}
We thank Dick de Jongh, Volodya Shavrukov, V\'it\v{e}slav \v{S}vejdar, and Rineke Verbrugge and Albert Visser for fruitful discussions and for comments on the 
origins of the field. Many thanks also to Taishi Kurahashi and Rineke Verbrugge for a careful cover-to-cover reading of the document pointing out various inaccuracies and room for improvement.

Joosten received support from grants RTC-2017-6740-7, FFI2015-70707P and 2017 SGR 270.
Mikec was supported by Croatian Science Foundation (HRZZ) under the projects UIP--05--2017--9219 
and IP--01--2018--7459.
Vukovi\'{c} was supported by Croatian Science Foundation (HRZZ) under the project 
IP--01--2018--7459.
\bibliographystyle{plain}
\bibliography{References}
\end{document}